\newcommand{\bsc}{\boldsymbol{c}}
\newcommand{\bsx}{\boldsymbol{x}}
\newcommand{\bsy}{\boldsymbol{y}}
\newcommand{\bsz}{\boldsymbol{z}}
\newcommand{\real}{\mathbb{R}}
\newcommand{\tran}{\mathsf{T}} 
\newcommand{\rd}{\mathrm{\, d}}
\newcommand{\one}{{\mathbf{1}}}
\newcommand{\var}{{\mathrm{Var}}}
\newcommand{\vol}{{\mathbf{vol}}}
\newcommand{\wt}{\widetilde}
\renewcommand{\ge}{\geqslant}
\renewcommand{\le}{\leqslant}
\newcommand{\dexp}{\text{Exp}}
\newcommand{\dnorm}{\mathcal{N}}
\newcommand{\dustd}{\mathbf{U}} 
\newcommand{\e}{{\mathbb{E}}} 
\newcommand{\bby}{\mathbb{Y}}
\numberwithin{equation}{section}
\theoremstyle{plain}
\newtheorem{theorem}{Theorem} 
\newcommand{\pluses}{+,  \cdots,  +}
\newcommand{\minuses}{-,  \cdots,  -}
\newcommand{\plusess}{+, +, +, \cdots, +, +, +}
\newcommand{\Kv}{K_{v, h, \bsx'}}
\newcommand{\bsr}{\boldsymbol{r}}
\newcommand{\beps}{\boldsymbol{\epsilon}}
\newcommand{\bseta}{\boldsymbol{\eta}}
\newcommand{\bsdel}{\boldsymbol{\delta}}
\newcommand{\Kvhepsc}{K_{v_{\boldsymbol{\epsilon}}, h_{\boldsymbol{\eta}}, \bsx_c}}
\newcommand{\Hv}{\mathcal{H}_{v, h, \bsx'}}
\newcommand{\hzx}{h(\left<\bsz, \bsx \right>)}
\newcommand{\heps}{h_{\boldsymbol{\eta}}}
\newtheorem{lemma}{Lemma}
\newtheorem{proposition}{Proposition}
\newtheorem{model}{Reference distribution}
\newtheorem*{remark}{Remark}
\newcommand{\sd}{{\mathbb{S}^d}}
\newcommand{\sdmo}{{\mathbb{S}^{d-1}}}
\newcommand{\sdmt}{{\mathbb{S}^{d-2}}}
\newcommand{\minm}{\underbar{m}}
\newcommand{\minr}{\underbar{r}}
\newcommand{\simiid}{\stackrel{\mathrm{iid}}\sim}
\newcommand{\phz}{\phantom{0}}
\renewcommand{\le}{\leqslant}
\renewcommand{\ge}{\geqslant}
\begin{document}

\begin{frontmatter}
\title{Permutation $p$-value approximation via generalized Stolarsky invariance}
\runauthor{He, Basu, Zhao \& Owen}
\runtitle{Stolarsky invariance for permutations}
\begin{aug}
\author{\fnms{Hera Y.} \snm{He,}\ead[label=e1]{(yhe1, kinjal, qyzhao, owen)@stanford.edu}}
\author{\fnms{Kinjal} \snm{Basu,}\ead[label=e2]{kinjal@stanford.edu}}
\author{\fnms{Qingyuan} \snm{Zhao,}\ead[label=e3]{qyzhao@stanford.edu}}
\author{\fnms{Art B.} \snm{Owen}\ead[label=e4]{owen@stanford.edu}}
\affiliation{Stanford University}
\address{Department of Statistics\\
Sequoia Hall\\
Stanford University\\
Stanford, CA 94305\\
\printead{e1}
}
\end{aug}
\date{}

\begin{abstract}
It is common for genomic data analysis to use $p$-values from 
a large number of permutation tests. The multiplicity of tests may require
very tiny $p$-values in order to reject any null hypotheses and the
common practice of using randomly sampled permutations then
becomes very expensive.  We propose an inexpensive approximation
to $p$-values for two sample linear test statistics, derived from
Stolarsky's invariance principle. The method creates a geometrically
derived set of approximate $p$-values for each hypothesis. 
The average of that set is used as
a point estimate $\hat p$ and our generalization of 
the invariance principle allows us to compute
the variance of the $p$-values in that set.  We find that in cases
where the point estimate is small the variance is a modest multiple
of the square of the point estimate, yielding a relative error property
similar to that of saddlepoint approximations.
On a Parkinson's disease data set, the new approximation 
is faster and more accurate than the saddlepoint approximation.
We also obtain a simple probabilistic explanation of Stolarsky's
invariance principle.
\end{abstract}
\end{frontmatter}
\section{Introduction}

Permutation methods are commonly used to obtain $p$-values
in genomic applications.   They make only modest assumptions
and they have a direct intuitive interpretation that appeals to biologists
in collaborations.   In even modestly large data
sets, the exact permutation $p$-value becomes
too expensive to compute. Then Monte Carlo sampling of random permutations
becomes a standard approach.  
Genomic applications commonly require thousands or more
of hypotheses to be tested, and then multiplicity adjustment requires that 
some small  $p$-values be obtained if any null hypotheses
are to be rejected.
When $p$-values below $\epsilon$ are required to reject $H_0$, then 
\cite{knij:wess:rein:shmu:2009} 
recommend doing at least $10/\epsilon$ random permutations. 
As a result, even Monte Carlo sampling for permutation tests
can be prohibitively expensive, and hence it pays to search
for fast approximations to the permutation $p$-value.

In this paper we develop rapidly computable  approximations
to some permutation $p$-values. The $p$-values we
consider are for a difference in group means. The approximations are based
on ideas from spherical geometry and discrepancy,
related to the Stolarsky invariance principle
\citep{stol:1973}.  As described below, the resulting approximations
prove to be very accurate for the tiny $p$-values where permutation
methods are most difficult to use.

We begin with some background on the genomic motivation
of our work. Then we transition to  spherical geometry.

\subsection*{Genomic context}
The specific problem that motivated us is testing for sets of genes associated
with Parkinson's disease \citep{lars:owen:2015}.  
More details about this work are given in the the first author's 
dissertation \citep{he:2016}.

In these data sets, there are $m_0$ subjects without Parkinson's disease
and $m_1$ subjects with it.  One can test whether Parkinson's disease is associated
with an individual gene by doing a $t$-test comparing gene expression
levels in tissue samples from the two groups of subjects.  Biological interest is often
summarized more by gene sets rather than individual genes.
Gene sets have two advantages:  small but consistent associations of many
genes with the test condition can raise power, and, the gene sets themselves
often connect better to biological understanding than do individual genes.
The null hypothesis is that the subject condition does not
affect expression levels of any gene in the gene set.

There is a very large literature on testing for significant associations
between a condition and the genes in a gene set.  
See \cite{acke:stri:2009} for an overview of the main concepts 
and methods in that literature. 
They did an extensive comparison of $261$ different
gene set testing methods and identified two families of winning methods.  
Let $t_g$ be the ordinary two sample $t$ statistic for
comparing the average expression level of gene $g$ between two
conditions and let $G$ be a set of genes of interest.
They found that linear and quadratic test statistics
$L_G=\sum_{g\in G}t_g$ and $Q_G=\sum_{g\in G}t_g^2$ 
had the best power, along with some simple approximations to those
two statistics.  These methods performed better
than some subtantially more complicated proposals.
The statistic $L_G$ was proposed by \cite{jian:gent:2007}.
The statistic $L_G$ and similar ones did best when expression 
differences between the two conditions tend to have the
same sign, for each $g\in G$.  If large but oppositely signed
treatment effects occur, then $Q_G$ and approximations to it do best.

The genes in a gene set are ordinarily correlated with each other,
even if they are independent of the treatment condition. The correlations
makes it difficult to find the null distributions of $L_G$ and $Q_G$,
even with parametric model assumptions.
In a permutation analysis, like \cite{acke:stri:2009} use,
we consider all $N={n\choose m_1}$ 
different ways to select a subset $\pi$ containing $m_1$ 
of the $n=m_0+m_1$ subjects. Let $Q_G^\pi$
be the test statistic recomputed as if those $m_1$ subjects had
been the affected group. Then the permutation $p$-value for $Q_G$ is
$$
p = p_G =\frac1N\sum_{\pi} \one_{Q_G^\pi \ge Q_G}.
$$
Note that the smallest possible value for $p$ is $1/N$.

When $N$ is too large for a permutation test
to be computationally feasible, a standard practice
is to estimate $p$ via randomly sampled permutations of the treatment label
as proposed by  \cite{barn:1963}.  
For $\ell=1,\dots,M-1$ we let $\pi(\ell)$ be the affected group after a
randomization of the treatment labels.  We let $\pi(0)$ be the original allocation.
Then the Monte Carlo estimate
$$
\hat p = \frac1M\sum_{\ell=0}^{M-1}\one_{Q_G^{\pi(\ell)} \ge Q_G}
$$
is used as an estimate of $p$, for the quadratic statistic.
In this Monte Carlo, the true permutation $p$-value $p$ is the unknown
parameter and $\hat p$ is the sample estimate of $p$.
Note that $\hat p\ge 1/M$ because we have included the original
allocation in the numerator.  Failure to include the original allocation $\pi(0)$
can lead to $\hat p =0$ which is very undesirable.
We call $1/M$ the granularity limit. 
When $p$ is quite small, an enormous number $M$ of simulations may be required
to get an accurate estimate of it.
For instance, in genome wide association studies (GWAS)
the customary threshold for significance is $\epsilon = 5\times 10^{-8}$,
making permutation methods prohibitively expensive, 
or even infeasible.
For a recent discussion of $p$-value thresholds in GWAS, see \cite{fadi:etal:2016}. 


In this paper, we work with one of
\nocite{acke:stri:2009}  Ackermann and Strimmer's (2009)
approximations to $L_G$.
Let $X_i = 1$ if subject $i$ is in condition $1$ and $X_i=0$ for
condition $0$. Let $Y_{ig}$ be the expression level of gene $g$
for subject $i$.  Let $\hat\rho_g$ be the sample correlation
between $X_i$ and $Y_{ig}$. 
Then $t_g = \sqrt{n-2}\hat\rho_g/\sqrt{1-\hat\rho_g^2}$
and a first order Taylor approximation gives
$ t_g \doteq \sqrt{n-2}(\hat\rho_g +\hat\rho^3_g/2)$.
When many small correlations $\hat\rho_g$ contribute to the
signal, then summing $\hat\rho_g$ gives a test statistic 
that is almost equivalent to summing $t_g$.  
\cite{acke:stri:2009} found that
\begin{align}\label{eq:sumofcorr}
\sum_{g\in G} \frac1n\sum_{i=1}^n \frac{X_i-\bar X}{s_X}\frac{Y_{gi}-\bar Y_g}{s_{g}}
\end{align}
was in the same winning set of methods as $L_G$, where $s_X$ and $s_{g}$
are standard deviations of $X_i$ and $Y_{gi}$ respectively.
They also considered using pooled variance estimates
in place of $s_g$ but found no advantage to doing so, perhaps because
$\min(m_0,m_1)$ was at least $10$ in their simulations.
Letting $Y_i = Y_{Gi} \equiv \sum_{g\in G}Y_{gi}/s_g$,
we may rewrite~\eqref{eq:sumofcorr} as
\begin{align}\label{eq:sumofcorr2}
\sum_{i=1}^n \frac{X_i-\bar X}{\Vert X-1_n\bar X\Vert}
\frac{Y_{i} -\bar Y}{\Vert Y-1_n\bar Y\Vert}
\end{align}
multiplied by a constant that only depends on $n$ and hence
does not affect $p$.
Equation~\eqref{eq:sumofcorr2} describes a test statistic that is a
plain Euclidean inner product
of two unit vectors in $\real^n$.
Here $\bsx_0$ has $i$'th component 
$(X_i-\bar X)/\Vert X-1_n\bar X\Vert$
and $\bsy_0$ is similar.

There are $N={n\choose m_1}$ distinct vectors found by
permuting the entries in $\bsx$.
We label them $\bsx_0,\bsx_1,\dots,\bsx_{N-1}$ with
$\bsx_0$ being the original one.
Letting $\hat\rho =\bsx_0^\tran\bsy_0$ we find that
one and two-sided $p$-values for a linear statistic  are
$$
\frac1N\sum_{k=0}^{N-1}\one_{\bsx_k^\tran\bsy_0\ge\hat\rho}\quad\text{and}\quad
\frac1N\sum_{k=0}^{N-1}\one_{|\bsx_k^\tran\bsy_0|\ge|\hat\rho|}
$$
repectively. We prefer two-sided test statistics, but we will study
one-sided ones first and then translate our results to two-sided ones.

\subsection*{Spherical geometry}
We are now ready to make a geometric interpretation.
Let $\sd=\{\bsz\in\real^{d+1}\mid \bsz^\tran\bsz=1\}$
be the $d$-dimensional unit sphere. 
Our data $\bsx_0,\bsy_0$ are in a subset of $\mathbb{S}^{n-1}$
orthogonal to $1_n$.  That subset is isomorphic to
$\mathbb{S}^{n-2}$ and so we work mostly with $d=n-2$.

Given a point $\bsy\in\sd$, the points $\bsz$ that are closest
to $\bsy$ comprise a spherical cap. The spherical cap of center $\bsy$
and height $t\in[-1,1]$ is
$C(\bsy;t) = \{\bsz\in\sd\mid \langle \bsy,\bsz\rangle\ge t\}$.
By symmetry, $\bsz\in C(\bsy;t)$ if and only if $\bsy\in C(\bsz;t)$.
The one-sided linear $p$-value is the fraction of $\bsx_k$ for $0\le k<N$
that belong to $C(\bsy_0;\hat\rho)$.  
A natural, but crude approximation to $p$ is then
$$\hat p_1(\hat\rho),\quad\text{where} \quad
\hat p_1(t)
\equiv
\frac{\vol( C(\bsy_0;t))}{\vol(\sd)}.$$

Stolarsky's invariance principal gives a remarkable description of the
accuracy of this approximation $\hat p_1$.
The squared $L_2$ spherical cap discrepancy of points
$\bsx_0,\bsx_1,\dots,\bsx_{N-1}\in\sd$  is
$$
L_2(\bsx_0,\dots,\bsx_{N-1})^2
=\int_{-1}^1\int_{\sd} |\hat p_1(t)-p(\bsz,t)|^2\rd\sigma_d(\bsz)\rd t
$$
where $p(\bsz,t)=(1/N)\sum_{k=0}^{N-1}\one_{\bsx_k\in C(\bsz,t)}$ and
$\sigma_d$ is the uniform (Haar) measure on $\sd$.
\cite{stol:1973} shows that
\begin{equation}\label{eq:stolarsky}
\begin{split}
\frac{d\omega_d}{\omega_{d+1}} \times L_2(\cdot)^2
&= \int_{\sd}\int_{\sd}\|\bsx-\bsy\|\rd \sigma_d(\bsx)\rd \sigma_d(\bsy)
-\frac{1}{N^2}\sum\limits_{k, l=0}^{N-1}\|\bsx_k - \bsx_l\|
\end{split}
\end{equation}
where $\omega_d$ is the (surface) volume of $\sd$.
Equation~\eqref{eq:stolarsky} relates the mean squared error of $\hat p_1$
to the mean absolute Euclidean distance among the $N$ points.
In our applications, the $N$ points will be the distinct permuted values of 
$\bsx_0$, but~\eqref{eq:stolarsky}
holds for an arbitrary set of $N$ points $\bsx_k$.

The left side of~\eqref{eq:stolarsky} is, up to normalization, a mean squared
discrepancy over spherical caps.
This average of $(\hat p_1 -p)^2$ includes $p$-values of all sizes between
$0$ and $1$. It is not then a very good accuracy measure when $\hat p_1(\hat\rho)$
turns out to be very small, such as $10^{-6}$.
It would be more useful to get such a mean squared error taken over
caps of exactly the size $\hat p_1(\hat\rho)$, and no others.

\cite{brau:dick:2013} consider quasi-Monte Carlo (QMC) sampling in the sphere.
They generalize Stolarsky's discrepancy formula to include a weighting
function on the height $t$.  By specializing their formula, we get
an expression for the mean of $(\hat p_1-p)^2$ over spherical caps of any fixed size.

Discrepancy theory plays a prominent role in QMC
\citep{nied:1992}, which is about approximating an integral by a sample average.
The present setting is  a reversal of QMC: the discrete average $p$
over permutations is the exact value we seek, and the integral over a continuum
is the approximation $\hat p$. A second difference is that the QMC
literature focusses on choosing $N$ points to minimize a criterion
such as~\eqref{eq:stolarsky}, whereas
here the $N$ points are determined by the problem.


As we will show below, the estimate $\hat p_1$ is the average of $p$ over
all spherical caps $C(\bsy;\hat\rho)$ 
under a uniform distribution, i.e.,  $\bsy\sim\dustd(\sd)$.
Those caps have the same volume as $C(\bsy_0;\hat\rho)$.

In addition to specializing from caps $C(\bsy;t)$ with $t=\hat\rho$
we can also specialize to caps whose centers $\bsy$ 
more closely resemble $\bsy_0$.
Suppose that
$$
\bsy_0 \in \bby \subset \sd.
$$
Then we know that $p(\bsy_0,\hat\rho) \le \sup_{\bsy\in \bby}p(\bsy,\hat\rho)$,
which is a conservative permutation $p$-value.
We are generally unable to compute this quantity but in some instances
we can form a reference distribution $\bsy\sim\dustd(\bby)$ and
compute both $\e(p(\bsy,\hat\rho)\mid\bsy\in\bby)$
and $\var(p(\bsy,\hat\rho)\mid\bsy\in\bby)$, the mean and
variance of $p(\bsy,\hat\rho)$ under this distribution.

The simplest reference distribution we use has
$\bby_1=\sd$.  
We have found that the set
$\bby_2 = \{\bsy\in\sd \mid \bsy^\tran\bsx_0 = \bsy_0^\tran\bsx_0\}$
and some generalizations yield especially useful reference distributions.
Generalizations of the Stolarsky formula allow us
to compute $\var(p(\bsy,\hat\rho)\mid\bsy\in\bby_2)$.
 In some of our numerical results from Section~\ref{sec:experimental},
we find that $\var(p(\bsy,\hat\rho)\mid\bsy\in \bby_2)$ is so small
that the true permutation $p$-value  $p(\bsy_0,\hat\rho)$ must
be of the same order of magnitude as the estimate
$\hat p_2\equiv \e(p(\bsy,\hat\rho)\mid\bsy\in\bby_2)$
that we study at length.

We obtain $\hat p_2$  and the mean square discrepancy
over its reference distribution by further extending Brauchart and Dick's
generalization of Stolarsky's invariance. 
More generally, we can replace the constraint
$\langle\bsy,\bsx_0\rangle = \langle\bsy_0,\bsx_0\rangle$
by 
$\langle\bsy,\bsx_c\rangle = \langle\bsy_0,\bsx_c\rangle$
for any individual $c\in \{0,1,\dots,N-1\}$.
Our estimate $\hat p_3$ takes
$\bsx_c$ to be whichever permuted
point $\bsx_k$ happens to be closest to $\bsy_0$.

Smaller sets $\bby$ could be even better than $\bby_2$. 
In the extreme, if we could work with $\bby$ that satisfies
$\langle\bsy,\bsx_k\rangle=\langle\bsy_0,\bsx_k\rangle$,
for  all  $0\le k<N$ then all $\bsy\in\bby$ would have 
$p(\bsy,\hat\rho)=p(\bsy_0,\hat\rho)=p$ and the mean over $\bsy\in\bby$
would have no error.  
Any smaller set is only useful if we can efficiently compute with it.

Although we found these results via invariance, we can also obtain
them via probabilistic arguments. As a consequence we have a probabilistic
derivation of Stolarsky's formula.
\cite{bilyk2016stolarsky} have independently found this connection.
Some of our results are for arbitrary $\bsx$, but our
best computational formulas are for the case where the variable $\bsx$
is binary, as it is for the Parkinson's disease data sets.

\subsection*{Outline}
The rest of the paper is organized as follows. Section~\ref{sec:background}
presents some context on permutation tests and gives some results from spherical geometry.
In Section~\ref{sec:stolarskys} we use Stolarsky's invariance
principle as generalized by \cite{brau:dick:2013} to obtain the mean squared
error between the true $p$-value and its continuous approximation $\hat p_1$, 
averaging over all spherical caps of volume $\hat p_1$.
This section also has a probabilistic derivation of that mean squared error.
In Section~\ref{sec:finerapprox} we
describe some finer approximations $\tilde p$ for the $p$-value.
These use the set $\bby_2$ to condition on not just the volume of the spherical 
cap but also on its distance from the original data point $\bsx_0$, 
or from some
other point, such as the closest permutation of $\bsx_0$ to $\bsy_0$.
By always including the original point we ensure that $\tilde p\ge 1/N$. That is a desirable
property because the true permutation $p$-value cannot be smaller than $1/N$.
In Section~\ref{sec:genstolarsky}
we modify the proof in \cite{brau:dick:2013}, to further generalize
their invariance results to include the mean squared error of the finer approximations.
Section~\ref{sec:two-sided} extends our estimates to two-sided testing.
Section~\ref{sec:experimental} illustrates our $p$-value approximations numerically.
We see that an RMS error in the estimate $\hat p_2$ is 
of the same order of magnitude as $\hat p_2$ itself. That is,
$\hat p_2$ has a relative error property like saddlepoint estimates do.
Section~\ref{sec:saddle} makes a numerical comparison to saddlepoint methods
in simulated data.  The saddlepoint estimates come out more
accurate than $\hat p_2$ but are biased low in the simulated examples.
Section~\ref{sec:data} compares the accuracy of our approximations 
to each other and to the saddlepoint approximation for
6180 gene sets and some Parkinson's disease data sets. In the data
examples, the new approximations come out closer to some gold
standard estimates (based on large Monte Carlo samples) than the saddlepoint
estimates do, which once again are biased low.
From Table 6.3 of \cite{he:2016}, the saddlepoint computations take
roughly $30$ times longer than $\hat p_2$ does.
Section~\ref{sec:discussion} draws some conclusions and
discusses the challenges in getting a computationally feasible
$p$-value that accounts for both sampling uncertainty of $(\bsx,\bsy)$
and the uncertainty in $\hat p$ as an estimate of $p$.
Most of the proofs are in the Appendix, Section~\ref{sec:appendix}.

\subsection*{Software}

The proposed approximations are implemented in the R package pipeGS on CRAN. Given a binary input label and a gene expression measurement matrix, it computes 
our three p value approximations
for the linear gene set statistics. 
Those statistics, $\hat p_1$, $\hat p_2$, $\hat p_3$, are mentioned
above and then presented in more detail in Section~\ref{sec:finerapprox}.
We provide an implementation of the saddlepoint approximation in the package as well.

\section{Background and notation}\label{sec:background}

The raw data contain points $(X_i,Y_i)$ for $i=1,\dots,n$, where
$Y_i$ may be a  composite quantity derived from 
all $Y_{gi}$ for $g$ belonging to a gene set $G$, such as
$Y_i=Y_{Gi}$ just before~\eqref{eq:sumofcorr2}.
We center and scale vectors 
$(X_1,X_2,\dots,X_n)$ and $(Y_1,Y_2,\dots,Y_n)$
yielding $\bsx_0,\bsy_0\in\sd$ for $d=n-1$.
Both points belong to $\{\bsz\in\mathbb{S}^{n-1}\mid \bsz^\tran 1_n=0\}$. 
We can use an orthogonal matrix to rotate the points of this set onto 
$\mathbb{S}^{n-2}\times\{0\}$.  As a result, we may simply work with 
$\bsx_0,\bsy_0\in\sd$ where $d=n-2$. 

The sample correlation of these variables is
$\hat\rho = \bsx_0^\tran\bsy_0 = \langle \bsx_0,\bsy_0\rangle$.
We use $\langle \bsx_0,\bsy_0\rangle$ when we find that geometrical
thinking is appropriate and to conform with \cite{brau:dick:2013}. 
We use $\bsx_0^\tran \bsy_0$ to emphasize computational 
or algebraic connotations.

Here we develop approximations
to the one-sided $p$-value as that simplifies notation.
Section~\ref{sec:two-sided}
shows how to obtain the corresponding two-sided $p$-values.
 We assume that $\hat\rho>0$ for otherwise $\hat p_1$
is going to be too large to be interesting.  For instance
with $m_0=m_1$, $\hat\rho \le 0$ implies that $\hat p_1 \ge 1/2$.

Our proposals are computationally most attractive in the case where
$X_i$ takes on just two values, such as $0$ and $1$.
Then $\hat\rho$ is a two-sample test statistic for a difference in means.
When there are $m_0$ observations with $X_i=0$ and $m_1$ with $X_i=1$
then $\bsx_0$ contains $m_0$ components equal to $-\sqrt{m_1/(nm_0)}$
and $m_1$ components equal to $+\sqrt{m_0/(nm_1)}$.
Computational costs are often sensitive to the smaller sample size, 
$\minm \equiv \min(m_0,m_1)$.

For this two-sample case there are only $N={m_0+m_1\choose m_0}$ distinct
permutations of $\bsx_0$.
We have called these $\bsx_0,\bsx_1,\dots,\bsx_{N-1}$  and 
the true $p$ value is
$
p = (1/N)\sum_{k=0}^{N-1} \one(\bsx_k^\tran\bsy_0 \ge \hat{\rho})
= (1/N)\sum_{k=0}^{N-1} \one(\bsx_k\in C(\bsy_0;\hat\rho))$.

Now suppose that there are exactly $r$ indices for which $\bsx_k$ is positive
and $\bsx_\ell$ is negative.  There are then $r$ indices with the reverse pattern too.
We say that $\bsx_k$ and $\bsx_\ell$ are at `swap distance $r$'
because $r$ zeros from $\bsx_k$ swapped positions with ones to yield $\bsx_\ell$.
In that case we easily find that
\begin{align}\label{eq:ur}
  u(r) \equiv 
\langle \bsx_k, \bsx_\ell \rangle
= 1 - r\Bigl(\frac1{m_0}+\frac1{m_1}\Bigr).
\end{align}

We need some geometric properties of the unit sphere and spherical caps.
The surface volume of $\sd$ is $\omega_d = 2\pi^{(d+1)/2}/\Gamma((d+1)/2)$.
We use $\sigma_d$ for the volume element in $\sd$ normalized so that
$\sigma_d(\sd)=1$.
The spherical cap $C(\bsy;t)
=\{\bsz\in\sd\mid \bsz^\tran\bsy\ge t\}$ has volume
\begin{align*}
\sigma_d(C(\bsy; t)) =
\begin{cases}
\frac{1}{2}I_{1-t^2}\left(\frac{d}{2}, \frac{1}{2}\right),  &0 \le t\le 1\\
1 - \frac{1}{2}I_{1-t^2}\left(\frac{d}{2}, \frac{1}{2}\right),  & -1 \le t < 0
\end{cases}
\end{align*}
where $I_t(a, b)$ is the incomplete beta function
\begin{align*}
I_t(a, b) =\frac{1}{B(a, b)}\int_0^t x^{a-1}(1-x)^{b-1} \rd x
\end{align*}
with $B(a, b) = \int_0^1 x^{a- 1}(1-x)^{b-1} \rd x$.
Obviously, this volume is $0$ if $t<-1$ and it is $1$ if $t>1$.
This volume is independent of $\bsy$ so we may write
$\sigma_d( C(\cdot\,,t))$ for the volume.

Our first approximation of the $p$-value is
  $\hat{p}_1(\hat{\rho}) = \sigma_d(C(\bsy;\hat{\rho}))$.
We remarked earlier that this approximation
equates a discrete fraction to a volume ratio.
We show in Proposition~\ref{prop:unbiased} that
$\hat{p}_1=\e( p\mid \left<\bsx_0,\bsy \right> = \hat{\rho})$ for $\bsy\sim\dustd(\sd)$
as $\bsy_0$ would if the original $Y_i$ were IID Gaussian.
In Theorem \ref{thm:phat1}, we find $\var(\hat p_1)$ under this assumption.

We frequently need to project $\bsy\in\sd$ onto a point $\bsx\in \sd$.
In this representation $\bsy = t\bsx +\sqrt{1-t^2}\bsy^*$
where $t=\bsy^\tran\bsx\in[-1,1]$ and
$\bsy^* \in \{\bsz\in\sd\mid\bsz^\tran\bsx=0\}$ which is isomorphic to $\sdmo$.
The coordinates $t$ and $\bsy^*$ are unique.
From equation (A.1) in \cite{brau:dick:2013} we get
\begin{align}\label{eq:decomp}
\rd\sigma_d(\bsy) =\frac{\omega_{d-1}}{\omega_d}(1-t^2)^{d/2-1}\rd t
\rd\sigma_{d-1}(\bsy^*).
\end{align}
In their case $\bsx$ was $(0,0,\dots,1)$.

The intersection of two spherical caps of common height $t$ is
$$C_2(\bsx,\bsy;t) \equiv C(\bsx;t)\cap C(\bsy;t). $$
We will need the volume of this intersection.
\cite{lee:kim:2014} give a general solution for spherical cap intersections
without requiring equal heights. They enumerate $25$ cases, but
our case does not correspond to any single case of theirs and so
we obtain the formula we need directly, below.
We suspect it must be known already, but
we were unable to find it in the literature.

\begin{lemma}\label{lem:volintersection}
Let $\bsx,\bsy\in\sd$ and $-1\le t\le 1$ and put $u=\bsx^\tran\bsy$.
Let $V_2(u;t,d) = \sigma_d(C_2(\bsx, \bsy;t))$.
If $u=1$, then $V_2(u;t,d) = \sigma_d(C(\bsx;t))$.
If $-1<u<1$, then
\begin{align}\label{eq:volintersection}
V_2(u;t,d) =
\frac{\omega_{d-1}}{\omega_d}
\int_{t}^1 (1-s^2)^{\frac{d}2-1}
\sigma_{d-1}( C(\bsy^{*};\rho(s)))\rd s,
\end{align}
where $\rho(s) = (t-su)/\sqrt{(1-s^2)(1-u^2)}$.
Finally, for $u=-1$,
\begin{align}\label{eq:volintersection2}
V_2(u;t,d)=
\begin{cases} 0, & t\ge 0\\
\frac{\omega_{d-1}}{\omega_d}
\int_{-|t|}^{|t|}(1-s^2)^{\frac{d}2-1} \rd s,&\mathrm{else.}
\end{cases}
\end{align}

\end{lemma}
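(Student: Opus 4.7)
The plan is to compute the volume of $C_2(\bsx,\bsy;t)$ by slicing along the axis through $\bsx$, using the decomposition~\eqref{eq:decomp}. Specifically, for $\bsz\in\sd$ I write $\bsz = s\bsx + \sqrt{1-s^2}\,\bsz^*$ where $s=\bsz^\tran\bsx$ and $\bsz^*$ lies in the unit sphere of the hyperplane orthogonal to $\bsx$, which is isomorphic to $\sdmo$. Then the membership condition $\bsz\in C(\bsx;t)$ is simply $s\ge t$, so the $s$-range of integration is $[t,1]$.

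The main point is translating the second cap condition $\bsz\in C(\bsy;t)$ into a cap condition on $\bsz^*$. For $-1<u<1$ I would decompose $\bsy = u\bsx + \sqrt{1-u^2}\,\bsy^*$ with $\bsy^*$ a unit vector perpendicular to $\bsx$. Expanding gives
\begin{equation*}
\bsz^\tran\bsy = su + \sqrt{1-s^2}\sqrt{1-u^2}\,\bsz^{*\tran}\bsy^*,
\end{equation*}
so the condition $\bsz^\tran\bsy\ge t$ is equivalent to $\bsz^{*\tran}\bsy^*\ge\rho(s) = (t-su)/\sqrt{(1-s^2)(1-u^2)}$, which is precisely the spherical cap condition $\bsz^*\in C(\bsy^*;\rho(s))$ in $\sdmo$. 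Substituting into~\eqref{eq:decomp} and integrating the inner $\bsz^*$-variable yields the claimed formula~\eqref{eq:volintersection}.

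For the degenerate cases, the argument is elementary. When $u=1$ we have $\bsy=\bsx$, so the two caps coincide and the intersection volume equals $\sigma_d(C(\bsx;t))$. When $u=-1$ we have $\bsy=-\bsx$, so the joint conditions become $s\ge t$ and $-s\ge t$, i.e., $|s|\le -t$. This is empty when $t\ge 0$, and when $t<0$ the $\bsz^*$-integral trivializes (the inner condition $\bsz^{*\tran}\bsy^*\ge\rho(s)$ no longer appears since $\bsy^*$ is not defined), leaving only the one-dimensional density $(\omega_{d-1}/\omega_d)(1-s^2)^{d/2-1}$ over $s\in[-|t|,|t|]$, which matches~\eqref{eq:volintersection2}.

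The only step that requires any care is the boundary/limit behavior: the formula~\eqref{eq:volintersection} has $\sqrt{1-u^2}$ in the denominator of $\rho(s)$, so the $u\to\pm 1$ cases cannot be read off directly and genuinely need the separate treatment above. Otherwise everything reduces to the standard polar decomposition and a change of variables, and the integrand $(1-s^2)^{d/2-1}\sigma_{d-1}(C(\bsy^*;\rho(s)))$ is well defined on $s\in[t,1)$ with an integrable endpoint at $s=1$, so no additional justification of the integral is needed.
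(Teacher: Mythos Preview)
Your proposal is correct and follows essentially the same route as the paper: project $\bsz$ and $\bsy$ onto $\bsx$ via $\bsz=s\bsx+\sqrt{1-s^2}\,\bsz^*$ and $\bsy=u\bsx+\sqrt{1-u^2}\,\bsy^*$, rewrite the two cap constraints as $s\ge t$ and $\langle\bsz^*,\bsy^*\rangle\ge\rho(s)$, and integrate using~\eqref{eq:decomp}, treating $u=\pm1$ separately. The paper phrases this as computing $\Pr(\bsz\in C_2(\bsx,\bsy;t))$ for $\bsz\sim\dustd(\sd)$, but the computation is identical to yours.
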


\begin{proof}
Let $\bsz\sim\dustd(\sd)$.
Then $V_2(u;t,d)=\sigma_d(C_2(\bsx,\bsy;t))=\Pr( \bsz\in C_2(\bsx,\bsy;t))$.
If $u=1$ then $\bsx=\bsy$ and so $C_2(\bsx,\bsy;t)=C(\bsx;t)$.
For $u<1$,
we project $\bsy$ and $\bsz$ onto $\bsx$, via
$\bsz  = s\bsx + \sqrt{1-s^2}\bsz^{*}$ and
$\bsy  = u\bsx + \sqrt{1-u^2}\bsy^{*}$.
Now
\begin{align*}
V_2(u;t,d)
&= \int_{\sd}
\one(\langle\bsx, \bsz \rangle \ge t)
\one(\langle\bsy, \bsz \rangle \ge t)
\rd \sigma(\bsz)\\
&=
\int_{-1}^1\one(s\ge t)\frac{\omega_{d-1}}{\omega_d}(1-s^2)^{\frac{d}{2}-1} \\
&\quad \times \int_{\sdmo}\!\!\one(su +
\sqrt{1-s^2}\sqrt{1-u^2}\left<\bsy^{*}, \bsz^{*} \right> \ge t)
\rd \sigma_{d-1}(\bsz^{*}) \rd s.
\end{align*}
If $u>-1$ then this reduces to~\eqref{eq:volintersection}. For $u=-1$ we get
$$
V_2(u;t,d) = \frac{\omega_{d-1}}{\omega_d}
\int_{-1}^1\one(s\ge t)\one(-s\ge t)
(1-s^2)^{\frac{d}{2}-1} \rd s.
$$
which reduces to~\eqref{eq:volintersection2}.
\end{proof}

When we give probabilistic arguments and interpretations we
do so for a random center $\bsy$ of a spherical cap.
That random center is taken from two reference distributions.
Those are distributions~\ref{mod:one} and~\ref{mod:two} below.
Reference distribution~\ref{mod:one} is illustrated in Figure~\ref{fig:mod1}.
Distribution~\ref{mod:two} is illustrated in Figure~\ref{fig:mod2}
of Section~\ref{sec:finerapprox} where we first use it.

\begin{model}\label{mod:one}
The vector $\bsy\sim\dustd(\bby_1)$ where $\bby_1=\sd$.
Expectation under this distribution is denoted $\e_1(\cdot)$.
\end{model}
\begin{model}\label{mod:two}
The vector
$\bsy\sim\dustd(\bby_2)$ where
$$\bby_2=\{\bsz\in \sd\mid\bsz^\tran\bsx_c=\tilde\rho\},$$
for some $-1 \le \tilde{\rho} \le 1$, and $c\in\{0,1,\dots,N-1\}$.
Then $\bsy=\tilde{\rho}\bsx_c + \sqrt{1 - \tilde{\rho}^2}\bsy^{*}$
for $\bsy^{*}$ uniformly distributed on a subset of $\sd$
isomorphic to $\sdmo$.
Expectation under this distribution is denoted $\e_2(\cdot)$.
\end{model}


Reference distribution~\ref{mod:one}
 holds true if the $Y_i$
are IID Gaussian random variables (with positive variance).
In that case, the estimate $\hat p_1$ is the same as we would get under
a $t$-test.  
Reference distribution~\ref{mod:two} is a significant narrowing
of reference distribution~\ref{mod:one} in the direction of
the ultimate reference distribution: a point mass on $\bsy=\bsy_0$.

\begin{figure}[t]
  \centering
  \includegraphics[scale = 0.35]{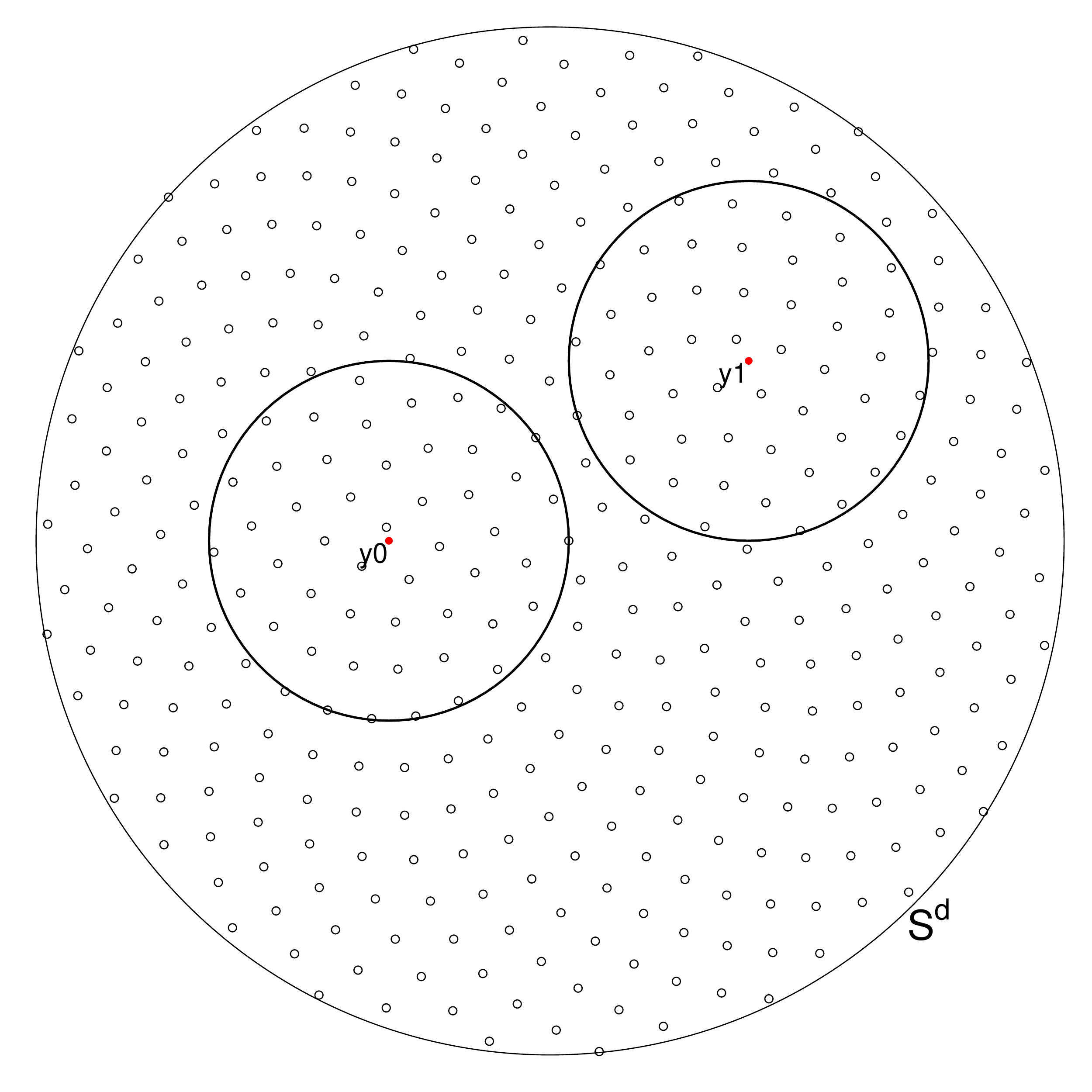}
  \caption{ \label{fig:mod1}
Illustration for reference distribution~\ref{mod:one}.
The point $\bsy$ is uniformly distributed over $\sd$. 
The small open circles represent permuted vectors $\bsx_k$. 
The point $\bsy_0$ is the observed value of $\bsy$.
The circle around it goes through $\bsx_0$ and represents a spherical cap
of height $\bsy_0^\tran\bsx_0$.
A second spherical cap of equal volume is centered at $\bsy=\bsy_1$.
We study moments of $p(\bsy,\hat\rho)$, the fraction of $\bsx_k$
in the cap centered at random~$\bsy$.
}
\end{figure}

\section{Approximation via spherical cap volume}\label{sec:stolarskys}

Here we study the approximate $p$-value
$\hat{p}_1(\hat{\rho}) = \sigma_d(C(\bsy;\hat{\rho}))$.
First we find the mean squared error of this approximation
over all spherical caps of the given volume via invariance.
Next we give a probabilistic interpretation which includes
the conditional unbiasedness result in Proposition~\ref{prop:unbiased} below.
Then we give two computational simplifications, first taking advantage
of the permutation structure of our points,
and then second for permutations
of a binary vector. We begin by restating the invariance principle.

\begin{theorem}\label{thm:stolarsky}
Let $ \bsx_0, \ldots, \bsx_{N-1}$ be any points in $\sd$. Then
\begin{align*}
\frac{1}{N^2}\sum\limits_{k, \ell=0}^{N-1}\|\bsx_k - \bsx_\ell\|  &+ \frac{1}{C_d}\int_{-1}^1\int_{S_d}
\biggl|\sigma_d(C(\bsz; t)) -
\frac{1}{N}\sum\limits_{k = 0}^{N-1}\one_{C(\bsz; t)}(\bsx_k)
\biggr|^2\rd \sigma_d(\bsz) \rd t \\
&= \int_{\sd}\int_{\sd}\|\bsx-\bsy\|\rd \sigma_d(\bsx)\rd \sigma_d(\bsy)
\end{align*}
where $C_d = {\omega_{d-1}}/(d\omega_{d})$.
\end{theorem}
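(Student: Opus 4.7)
The plan is to reduce Stolarsky's identity to a single key lemma: for any $\bsx,\bsy\in\sd$,
\begin{equation*}
\int_{-1}^1\int_{\sd}\bigl[\one_{C(\bsz;t)}(\bsx)-\one_{C(\bsz;t)}(\bsy)\bigr]^2\,d\sigma_d(\bsz)\,dt \;=\; 2C_d\,\|\bsx-\bsy\|.
\end{equation*}
To prove this, I would observe that the integrand is $1$ iff the height $t$ lies between $\langle\bsx,\bsz\rangle$ and $\langle\bsy,\bsz\rangle$, so integrating in $t$ yields $|\langle\bsx-\bsy,\bsz\rangle|$. By rotational invariance of $\sigma_d$, $\int_{\sd}|\langle\bsx-\bsy,\bsz\rangle|\,d\sigma_d(\bsz) = \|\bsx-\bsy\|\int_{\sd}|z_1|\,d\sigma_d(\bsz)$, and the residual scalar integral is evaluated with the slab decomposition~\eqref{eq:decomp} and the substitution $u=1-z_1^2$, producing $2\omega_{d-1}/(d\omega_d)=2C_d$.

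With the lemma in hand, I would expand the squared integrand on the LHS of the theorem. Writing $V(t):=\sigma_d(C(\cdot\,;t))$, which does not depend on the cap center by rotational symmetry, $\int_{\sd}\one_{C(\bsz;t)}(\bsx_k)\,d\sigma_d(\bsz)=V(t)$ for every $k$, so the $\sigma_d(C(\bsz;t))^2$ term and the cross term together integrate to $-\int V(t)^2\,dt$. For the squared empirical average I would apply the elementary identity $ab=\tfrac12(a+b-(a-b)^2)$, valid for $a,b\in\{0,1\}$, to obtain
\begin{equation*}
\Bigl(\tfrac{1}{N}\sum_{k}\one_{C(\bsz;t)}(\bsx_k)\Bigr)^{\!2} = \tfrac{1}{N}\sum_{k}\one_{C(\bsz;t)}(\bsx_k)-\tfrac{1}{2N^2}\sum_{k,\ell}\bigl[\one_{C(\bsz;t)}(\bsx_k)-\one_{C(\bsz;t)}(\bsx_\ell)\bigr]^2.
\end{equation*}
The first piece integrates to $\int V(t)\,dt$ and, by the key lemma applied pairwise, the second integrates to $(C_d/N^2)\sum_{k,\ell}\|\bsx_k-\bsx_\ell\|$. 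Assembling the pieces, the discrepancy integral on the LHS equals $\int_{-1}^1 V(t)(1-V(t))\,dt - (C_d/N^2)\sum_{k,\ell}\|\bsx_k-\bsx_\ell\|$.

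Finally, to identify the scalar $\int V(t)(1-V(t))\,dt$ with $C_d$ times the mean pairwise distance on $\sd$, I would apply the key lemma once more and integrate both sides against $\bsx,\bsy\sim\dustd(\sd)$: swapping orders of integration and using $\int_{\sd}\one_{C(\bsz;t)}(\bsx)\,d\sigma_d(\bsx)=V(t)$ yields $\int_{\sd}\int_{\sd}\|\bsx-\bsy\|\,d\sigma_d(\bsx)\,d\sigma_d(\bsy)=(1/C_d)\int_{-1}^1 V(t)(1-V(t))\,dt$. Substituting this into the displayed expression for the discrepancy integral and rearranging yields exactly the claim of Theorem~\ref{thm:stolarsky}. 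The only real obstacle is the algebraic bookkeeping of cancellations; the geometric content of the argument is concentrated entirely in the key lemma, and the crux within that lemma is the clean evaluation $\int_{\sd}|z_1|\,d\sigma_d(\bsz)=2C_d$.
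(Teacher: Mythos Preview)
Your argument is correct. The key lemma, the $ab=\tfrac12(a+b-(a-b)^2)$ trick for indicators, and the averaging step to identify $\int V(t)(1-V(t))\,dt$ all go through cleanly, and the final rearrangement is exactly the theorem.

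The paper itself does not prove this statement: its ``proof'' is a one-line citation to \cite{stol:1973}, and it immediately moves on to the weighted generalization of \cite{brau:dick:2013} (Theorem~\ref{thm:weighted-stolarsky}), whose proof works in a reproducing-kernel Hilbert space and recovers the present identity by specializing to $v\equiv 1$ and $K(\bsx,\bsy)=1-C_d\|\bsx-\bsy\|$. Your route is genuinely different and more elementary: you bypass the RKHS machinery entirely by isolating the pairwise identity $\int_{-1}^1\int_{\sd}(\one_{C(\bsz;t)}(\bsx)-\one_{C(\bsz;t)}(\bsy))^2\,d\sigma_d(\bsz)\,dt=2C_d\|\bsx-\bsy\|$ and then doing the bookkeeping directly. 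What your approach buys is transparency and self-containment for the unweighted case; what the RKHS approach buys is a uniform template that accommodates the weight $v(t)$ (and, in this paper's further generalization, the spatial weight $h(\langle\bsz,\bsx'\rangle)$) without reworking the combinatorics. Your key lemma is in fact the probabilistic reading of Stolarsky that the paper alludes to in the introduction (and attributes independently to \cite{bilyk2016stolarsky}).
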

\begin{proof}
\cite{stol:1973}.
\end{proof}

\cite{brau:dick:2013}
gave a simple proof of Theorem~\ref{thm:stolarsky}
using reproducing kernel Hilbert spaces. They also generalized
Theorem~\ref{thm:stolarsky} as follows.

\begin{theorem}\label{thm:weighted-stolarsky}
Let $\bsx_0, \ldots, \bsx_{N-1}$ be any points in $\sd$.
Let $v:[-1,1]\to(0,\infty)$ be any function with an antiderivative.
Then
  \begin{align}  \label{eq:brauthm}
  \begin{split}
    &\quad\int_{-1}^1v(t) \int_{\sd} \biggl| \sigma_d(C(\bsz;t)) -
      \frac{1}{N}\sum\limits_{k=0}^{N-1}\one_{C(\bsz;t)}(\bsx_k)\biggr|^2
    \rd \sigma_d(\bsz) \rd t \\
   & =
    \frac{1}{N^2}\sum\limits_{k, l = 0}^{N-1}K_{v}(\bsx_k, \bsx_\ell) - \int_{\sd} \int_{\sd}
    K_{v}(\bsx, \bsy) \rd \sigma_d(\bsx)\rd \sigma_d(\bsy)
  \end{split}
  \end{align}
where $K_v(\bsx, \bsy)$ is a reproducing kernel function defined by
\begin{align}\label{eq:defkv}
K_v(\bsx, \bsy) = \int_{-1}^1 v(t)\int_{\sd}\one_{C(\bsz;t)}(\bsx)\one_{C(\bsz;t)}(\bsy)\rd \sigma_d(\bsz)\rd t.
\end{align}
\end{theorem}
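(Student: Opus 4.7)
My plan is to expand the squared modulus on the left-hand side into three pieces, interchange the orders of integration by Fubini, and then match terms against the right-hand side. The only nontrivial ingredients are the rotational symmetry of $\sigma_d$, the reciprocity $\one_{C(\bsz;t)}(\bsx)=\one_{C(\bsx;t)}(\bsz)$, and the definition of $K_v$.

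\textbf{Step 1: Expand.} Write the integrand on the left as
\begin{align*}
\sigma_d(C(\bsz;t))^2 &- \tfrac{2}{N}\sigma_d(C(\bsz;t))\sum_{k=0}^{N-1}\one_{C(\bsz;t)}(\bsx_k) \\
&+ \tfrac{1}{N^2}\sum_{k,\ell=0}^{N-1}\one_{C(\bsz;t)}(\bsx_k)\one_{C(\bsz;t)}(\bsx_\ell),
\end{align*}
and integrate each piece against $v(t)\rd\sigma_d(\bsz)\rd t$ separately, granting enough integrability of $v$ on $[-1,1]$ for Fubini to apply.

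\textbf{Step 2: Reduce using symmetry.} Let $A(t)=\sigma_d(C(\bsz;t))$, which by rotational invariance of $\sigma_d$ does not depend on $\bsz$. Using $\one_{C(\bsz;t)}(\bsx)=\one_{C(\bsx;t)}(\bsz)$ and $\sigma_d(\sd)=1$, the quadratic piece becomes $\int_{-1}^{1} v(t)A(t)^2\rd t$, and each summand in the middle piece becomes $\int_{-1}^{1} v(t) A(t)\int_{\sd}\one_{C(\bsx_k;t)}(\bsz)\rd\sigma_d(\bsz)\rd t = \int_{-1}^{1} v(t) A(t)^2\rd t$, so the middle piece sums to $-2\int_{-1}^{1} v(t)A(t)^2\rd t$. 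Adding these, the first two pieces collapse to $-\int_{-1}^{1} v(t)A(t)^2 \rd t$.

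\textbf{Step 3: Identify the double sum and the double integral.} By definition the third piece is exactly $\frac{1}{N^2}\sum_{k,\ell}K_v(\bsx_k,\bsx_\ell)$. Similarly, one more Fubini swap gives
\[
\int_{\sd}\!\!\int_{\sd} K_v(\bsx,\bsy)\rd\sigma_d(\bsx)\rd\sigma_d(\bsy) = \int_{-1}^{1}\!\! v(t)\int_{\sd}\Bigl(\int_{\sd}\one_{C(\bsz;t)}(\bsx)\rd\sigma_d(\bsx)\Bigr)^{\!2}\rd\sigma_d(\bsz)\rd t = \int_{-1}^{1} v(t)A(t)^2\rd t,
\]
matching the collapsed contribution of the first two pieces with the opposite sign. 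Assembling all three pieces yields the displayed identity.

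\textbf{Main obstacle.} There is no deep obstruction; the only thing to watch is that Fubini requires $v$ to be integrable (the hypothesis that $v$ has an antiderivative is used implicitly with enough regularity so that all iterated integrals converge absolutely). The conceptual heart is recognizing that $\sigma_d(C(\bsz;t))$ is constant in $\bsz$, because this is what causes the cross-term and the quadratic term to cancel down to exactly the $\rd\sigma_d\otimes\rd\sigma_d$ average of $K_v$, thereby exposing $K_v$ as the natural reproducing kernel dictated by the squared-discrepancy functional on the left.
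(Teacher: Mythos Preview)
Your proof is correct. The paper itself does not give a proof, citing instead Theorem~5.1 of Brauchart and Dick (2013); but the method used there, and reproduced in this paper for the more general location-weighted version (Theorem~\ref{thm:location-weighted-stolarsky}), is the reproducing-kernel Hilbert space route: one verifies that $K_v$ is symmetric and positive definite, builds the associated RKHS with inner product $\langle f_1,f_2\rangle_{K_v}=\int v(t)^{-1}\int g_1 g_2\,\rd\sigma_d\rd t$, and then computes the squared norm of the ``representer of the error'' $\mathcal{R}(\cdot)=\int_{\sd}K_v(\cdot,\bsy)\rd\sigma_d(\bsy)-N^{-1}\sum_k K_v(\cdot,\bsx_k)$ in two different ways. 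Your argument bypasses all of this by directly expanding the square and using only the rotational invariance of $\sigma_d$ plus the reciprocity $\one_{C(\bsz;t)}(\bsx)=\one_{C(\bsx;t)}(\bsz)$. This is genuinely more elementary and, as you implicitly notice, does not actually need $v>0$ or the antiderivative hypothesis---any $v\in L_1[-1,1]$ suffices for the algebraic identity, since every integrand is bounded by $|v(t)|$. What the RKHS route buys is the interpretation of $K_v$ as a bona fide reproducing kernel (which \emph{does} require $v>0$) and a template that extends cleanly to the location-weighted case; what your route buys is a one-page proof with no functional-analytic overhead.
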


\begin{proof}
See Theorem 5.1 in \cite{brau:dick:2013}
\end{proof}

If we set $v(t) = 1$ and $K(\bsx, \bsy) = 1 - C_d\|\bsx-\bsy\|$, then we recover the original Stolarsky formula. Note that the statement of Theorem 5.1 in \cite{brau:dick:2013} has a sign error in their counterpart to \eqref{eq:brauthm}. The corrected statement~\eqref{eq:brauthm} can be verified by comparing equations (5.3) and (5.4) of \cite{brau:dick:2013}.

We would like a version of~\eqref{eq:brauthm} just for one value
of $t$ such as $t=\hat\rho=\bsx_0^\tran\bsy_0$.
For $\hat\rho\in[-1,1)$ and $\beps=(\epsilon_1,\epsilon_2)\in (0,1)^2$, let
\begin{align}\label{eq:veps}
v_{\beps}(t) 
= \epsilon_2 + \frac{1}{\epsilon_1} \one (\hat{\rho}\le t \le\hat{\rho} + \epsilon_1).
\end{align}
Each $v_{\beps}$ satisfies the conditions of Theorem~\ref{thm:weighted-stolarsky}
making~\eqref{eq:brauthm} an identity in $\beps$.
We  let $\epsilon_2\to0$ and then $\epsilon_1\to0$ on both sides of~\eqref{eq:brauthm}
for $v = v_{\beps}$ yielding Theorem~\ref{thm:model1viastolarsky}.

\begin{theorem}\label{thm:model1viastolarsky}
Let $\bsx_0,\bsx_1,\dots,\bsx_{N-1}\in\sd$ and $t\in[-1,1]$. Then
\begin{align}\label{eq:phat1viastolarsky}
 \int_{\sd} |p(\bsy, t) -\hat{p}_1(t)|^2\rd \sigma_d(\bsy) =
\frac{1}{N^2} \sum_{k=0}^{N-1} \sum_{\ell=0}^{N-1}
\sigma_d(C_2(\bsx_k,\bsx_\ell;t))- \hat{p}_1(t)^2.
\end{align}
\end{theorem}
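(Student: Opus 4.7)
The plan is to instantiate the weighted Stolarsky identity of Theorem~\ref{thm:weighted-stolarsky} with the family of weights $v_\beps$ from~\eqref{eq:veps} (with the placeholder $\hat\rho$ replaced by the value $t$ in the theorem statement) and pass to the limit $\epsilon_2\to 0$ followed by $\epsilon_1\to 0$. Since each $v_\beps$ is strictly positive on $[-1,1]$ and piecewise constant, it admits an antiderivative and satisfies the hypotheses of Theorem~\ref{thm:weighted-stolarsky}, so~\eqref{eq:brauthm} is an identity in $\beps$. All that remains is to identify both sides of that identity in the limit.

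First I would simplify the kernel. Swapping the order of integration in~\eqref{eq:defkv} and using the elementary fact that $\int_{\sd}\one_{C(\bsz;s)}(\bsx)\one_{C(\bsz;s)}(\bsy)\rd\sigma_d(\bsz)=\sigma_d(C_2(\bsx,\bsy;s))$, we get
\begin{align*}
K_{v_\beps}(\bsx,\bsy)
=\epsilon_2\int_{-1}^1 \sigma_d(C_2(\bsx,\bsy;s))\rd s
+\frac{1}{\epsilon_1}\int_t^{t+\epsilon_1}\sigma_d(C_2(\bsx,\bsy;s))\rd s.
\end{align*}
As $\epsilon_2\to 0$ the first piece disappears uniformly in $(\bsx,\bsy)$, and as $\epsilon_1\to 0$ the second piece tends to $\sigma_d(C_2(\bsx,\bsy;t))$ by continuity of $s\mapsto\sigma_d(C_2(\bsx,\bsy;s))$. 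This limit holds pointwise for every fixed pair $(\bsx_k,\bsx_\ell)$, giving the finite sum $\frac{1}{N^2}\sum_{k,\ell}\sigma_d(C_2(\bsx_k,\bsx_\ell;t))$ on the right-hand side of~\eqref{eq:brauthm}. For the double integral, I would use dominated convergence (the integrands are bounded by $1$ on a probability space), after which the identity $\int\!\int\sigma_d(C_2(\bsx,\bsy;t))\rd\sigma_d(\bsx)\rd\sigma_d(\bsy)=\int \sigma_d(C(\bsz;t))^2\rd\sigma_d(\bsz)=\hat p_1(t)^2$ (via Fubini and the fact that the cap volume $\sigma_d(C(\bsz;t))$ is independent of $\bsz$) produces $-\hat p_1(t)^2$.

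For the left-hand side of~\eqref{eq:brauthm}, note that under $v_\beps$ we are integrating $\int_{\sd}|\sigma_d(C(\bsz;s))-p(\bsz,s)|^2\rd\sigma_d(\bsz)=\int_{\sd}|\hat p_1(s)-p(\bsz,s)|^2\rd\sigma_d(\bsz)$ against $v_\beps(s)\rd s$. The $\epsilon_2$-piece vanishes as $\epsilon_2\to 0$, and the remaining averaged integral converges, as $\epsilon_1\to 0$, to $\int_{\sd}|p(\bsy,t)-\hat p_1(t)|^2\rd\sigma_d(\bsy)$, which is the left-hand side of~\eqref{eq:phat1viastolarsky}. Combining the limiting LHS and RHS of~\eqref{eq:brauthm} gives the claim.

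The only subtle point is justifying the limit $\epsilon_1\to 0$ in both the single and double integrals simultaneously; this is where I expect the main (though mild) obstacle. It reduces to continuity of $s\mapsto\sigma_d(C_2(\bsx,\bsy;s))$ (clear from Lemma~\ref{lem:volintersection}) and of $s\mapsto\int_{\sd}|\hat p_1(s)-p(\bsz,s)|^2\rd\sigma_d(\bsz)$, which follows from bounded convergence since $\hat p_1(s)$ is continuous in $s$ and $p(\bsz,s)$ has only countably many jump discontinuities in $s$ for each $\bsz$. An edge case is $t=1$, where $[t,t+\epsilon_1]$ leaves $[-1,1]$; this is handled either by replacing $[t,t+\epsilon_1]$ with $[t-\epsilon_1,t]$ in the definition of $v_\beps$, or by noting that the theorem is trivial at $t=1$ since both sides vanish.
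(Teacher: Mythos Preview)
Your proposal is correct and follows essentially the same route as the paper: instantiate Theorem~\ref{thm:weighted-stolarsky} with $v_\beps$, let $\epsilon_2\to 0$ then $\epsilon_1\to 0$, and identify the three limiting pieces (left-hand side, discrete kernel sum, and double kernel integral) exactly as the paper does in Lemmas~\ref{lem:vepslhs}--\ref{lem:kvint}, including the same fix at the endpoint $t=1$.
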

\begin{proof}
See Section~\ref{sec:proof:thm:model1viastolarsky} of the Appendix
which uses the limit argument described above.
\end{proof}

We now give a proposition that holds for all distributions of $\bsy\in\sd$
including our reference distributions~\ref{mod:one} and~\ref{mod:two}.

\begin{proposition}\label{prop:moment}
For a random point $\bsy\in\sd$,
\begin{align}
\e(p(\bsy,t)) &= \frac1N\sum_{k=0}^{N-1}\Pr( \bsy\in C(\bsx_k;t)),\quad\text{and}\label{eq:moment1}\\
\e(p(\bsy,t)^2) &= \frac1{N^2}\sum_{k,\ell=0}^{N-1}\Pr( \bsy\in C_2(\bsx_k,\bsx_\ell;t)).\label{eq:moment2}
\end{align}
\end{proposition}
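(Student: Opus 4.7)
The plan is to exploit the symmetry property of spherical caps, namely that $\bsx_k \in C(\bsy;t)$ if and only if $\langle \bsy, \bsx_k\rangle \ge t$ if and only if $\bsy \in C(\bsx_k;t)$. This lets us rewrite $p(\bsy,t)$, which was defined as an average of indicators that $\bsx_k$ lies in a cap centered at $\bsy$, as an average of indicators that $\bsy$ lies in caps centered at the $\bsx_k$. After that rewrite, both claims follow from linearity of expectation, with no probabilistic structure on $\bsy$ beyond measurability required.

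Concretely, for the first moment I would write
\[
p(\bsy,t) \;=\; \frac1N\sum_{k=0}^{N-1}\one\{\bsx_k\in C(\bsy;t)\} \;=\; \frac1N\sum_{k=0}^{N-1}\one\{\bsy\in C(\bsx_k;t)\},
\]
take expectations term by term, and identify $\e\,\one\{\bsy\in C(\bsx_k;t)\}=\Pr(\bsy\in C(\bsx_k;t))$, giving \eqref{eq:moment1}. For the second moment, square the same representation before taking expectations:
\[
p(\bsy,t)^2 \;=\; \frac1{N^2}\sum_{k,\ell=0}^{N-1}\one\{\bsy\in C(\bsx_k;t)\}\,\one\{\bsy\in C(\bsx_\ell;t)\}.
\]
The product of the two indicators equals $\one\{\bsy\in C(\bsx_k;t)\cap C(\bsx_\ell;t)\}=\one\{\bsy\in C_2(\bsx_k,\bsx_\ell;t)\}$ by the definition of $C_2$ given just before Lemma~\ref{lem:volintersection}. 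Taking expectations and using linearity yields \eqref{eq:moment2}.

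There is no real obstacle here; the entire content is the cap-symmetry swap $\bsx_k\in C(\bsy;t)\Leftrightarrow \bsy\in C(\bsx_k;t)$ and Fubini/linearity. The statement is valid for any distribution on $\bsy\in\sd$, and in particular applies to reference distributions~\ref{mod:one} and~\ref{mod:two}; nothing about uniformity or about the structure of the $\bsx_k$ as permutations is needed at this level.
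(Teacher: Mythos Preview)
Your proof is correct and is exactly the intended argument; the paper in fact omits the proof of this proposition as routine, and what you wrote is the standard justification via the cap-symmetry $\bsx_k\in C(\bsy;t)\Leftrightarrow \bsy\in C(\bsx_k;t)$ and linearity of expectation.
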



\begin{proposition}\label{prop:unbiased}
For any $\bsx_0,\dots,\bsx_{N-1}\in\sd$ and $t\in[-1,1]$,
$\hat p_1(t) = \e_1( p(\bsy,t))$.
\end{proposition}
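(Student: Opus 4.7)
The plan is to apply Proposition~\ref{prop:moment} directly and then exploit the rotational invariance of the Haar measure $\sigma_d$ on $\sd$. Since $\bsy\sim\dustd(\sd)$ under reference distribution~\ref{mod:one}, equation~\eqref{eq:moment1} gives
\[
\e_1(p(\bsy,t)) = \frac{1}{N}\sum_{k=0}^{N-1}\Pr(\bsy \in C(\bsx_k;t)),
\]
so the task reduces to evaluating each term $\Pr(\bsy\in C(\bsx_k;t))$.

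Next, I would observe that $\Pr(\bsy\in C(\bsx_k;t))=\sigma_d(C(\bsx_k;t))$, and then invoke the fact that the normalized volume of a spherical cap depends only on its height $t$, not on its center. This is the symmetry remark already made in the paper (``This volume is independent of $\bsy$ so we may write $\sigma_d(C(\cdot,t))$ for the volume''), and it follows from the rotational invariance of $\sigma_d$: for any rotation $R$ sending $\bsx_k$ to a reference point, $\sigma_d(C(\bsx_k;t)) = \sigma_d(R^{-1}C(\bsx_k;t))$, which equals the common value $\sigma_d(C(\cdot;t))$.

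Substituting this back, every term in the sum equals $\sigma_d(C(\cdot;t))$, giving
\[
\e_1(p(\bsy,t)) = \frac{1}{N}\cdot N\cdot \sigma_d(C(\cdot;t)) = \sigma_d(C(\bsy;t)) = \hat p_1(t),
\]
as claimed.

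There is essentially no obstacle here: the result is a one-line consequence of linearity of expectation, the symmetry $\bsx_k\in C(\bsy;t) \iff \bsy\in C(\bsx_k;t)$, and rotational invariance. If anything, the only bookkeeping point worth noting is that the identity $p(\bsy,t) = (1/N)\sum_k\one_{C(\bsy;t)}(\bsx_k)$ must be rewritten using this swap of roles before the invariance argument applies, so the proof amounts to making that rewriting explicit.
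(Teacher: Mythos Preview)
Your proposal is correct and takes essentially the same approach as the paper: linearity of expectation together with the symmetry $\bsx_k\in C(\bsy;t)\iff\bsy\in C(\bsx_k;t)$ and rotational invariance of $\sigma_d$. The paper's proof is simply the one-line version of this, computing $\e_1[\one_{C(\bsy;t)}(\bsx_k)]=\sigma_d(C(\cdot;t))$ directly without routing through Proposition~\ref{prop:moment}.
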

\begin{proof}
    $\e_1( p(\bsy, t)) =
    \e_1\Bigl[\dfrac{1}{N}\sum\limits_{k = 0}^{N -1}
   \one_{C(\bsy;t)}(\bsx_k) \Bigr]
 =\sigma_d(C_d(\bsy; t)) = \hat p_1(t)$.
\end{proof}

Combining Propositions~\ref{prop:moment} and \ref{prop:unbiased} with
Theorem~\ref{thm:model1viastolarsky} we find that
if $\bsy\sim\dustd(\sd)$, as it would for IID Gaussian $Y_i$,
then $p(\bsy,\hat\rho)$ is a random
variable with mean $\hat p_1(\hat\rho)$ and variance given by~\eqref{eq:phat1viastolarsky}
with $t=\hat\rho$. Here $\hat\rho =\bsx_0^\tran\bsy_0$ is fixed
while $\bsy$ is random.

The right hand side of~\eqref{eq:phat1viastolarsky} sums $O(N^2)$
terms. In a permutation analysis we might have $N=n!$ or
$N = {m_0+m_1\choose m_0}$ for binary $X_i$, and so the
computational cost could be high.
The symmetry in a permutation set allows us to use
\begin{align*}
 \int_{\sd} |p(\bsy, t) -\hat{p}_1(t)|^2\rd \sigma_d(\bsy) =
\frac{1}{N} \sum_{k=0}^{N-1} \sigma_d(C_2(\bsx_0,\bsx_k;t))- \hat{p}_1(t)^2
\end{align*}
instead.  This expression costs $O(N)$, the same as the full permutation analysis.
The cost can be reduced for binary $X_i$.

When the $X_i$ are binary, then for fixed $t$,
$\sigma_d(C_2(\bsx_k,\bsx_\ell;t))$
just depends on the swap distance $r$
between $\bsx_k$ and $\bsx_\ell$. Then
\begin{align}\label{eq:phat1viastolarsky3}
 \int_{\sd} |p(\bsy, t) -\hat{p}_1(t)|^2\rd \sigma_d(\bsy) =
\frac{1 }{N} \sum_{r=0}^{\minm}
N_rV_2(u(r);t,d)- \hat{p}_1(t)^2
\end{align}
for $V_2(u(r);t,d)$ given in Lemma~\ref{lem:volintersection},
where $N_r=\sum_{k=0}^{N-1} \sum_{\ell=0}^{N-1} \one(r_{k,\ell}=r)$ counts
pairs $(\bsx_k,\bsx_\ell)$ at swap distance $r$.

\begin{theorem}  \label{thm:phat1}
Let $\bsx_0\in\sd$ be the centered and scaled vector from
an experiment with binary $X_i$ of which $m_0$ are negative and
$m_1$ are positive.
Let $\bsx_0,\bsx_1,\dots,\bsx_{N-1}$
be the $N={m_0+m_1\choose m_0}$ distinct permutations of $\bsx_0$.
If $\bsy\sim\dustd(\sd)$, then for $t\in[-1,1]$, and with $u(r)$ defined in \eqref{eq:ur},
\begin{align*}
\e(p(\bsy,t)) &= \sigma_d(C(\bsy_0;t)),\quad\text{and}\\
\var(p(\bsy, t)) &= \frac1N\sum\limits_{r = 0}^{\minm}
{m_0 \choose r}{m_1 \choose r}V_2(u(r);t,d) - \hat p_1(t)^2.
  \end{align*}

\end{theorem}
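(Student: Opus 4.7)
\textbf{Proof plan for Theorem \ref{thm:phat1}.} The plan is to assemble the result from three ingredients already developed: Proposition~\ref{prop:unbiased} for the mean, Theorem~\ref{thm:model1viastolarsky} for the mean squared deviation of $p(\bsy,t)$ from $\hat p_1(t)$ under reference distribution~\ref{mod:one}, and Lemma~\ref{lem:volintersection} together with a combinatorial reduction using the permutation symmetry for binary $\bsx_0$.

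First, for the mean I would invoke Proposition~\ref{prop:unbiased} directly. Since $\sigma_d(C(\bsy;t))$ depends only on $t$ and not on the center $\bsy$, the quantity $\hat p_1(t)$ equals $\sigma_d(C(\bsy_0;t))$, and the proposition gives $\e(p(\bsy,t)) = \hat p_1(t) = \sigma_d(C(\bsy_0;t))$ when $\bsy\sim\dustd(\sd)$. This step is essentially immediate.

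For the variance, because $\bsy\sim\dustd(\sd)$ we have
$$\var(p(\bsy,t)) = \int_{\sd}|p(\bsy,t)-\hat p_1(t)|^2\rd\sigma_d(\bsy),$$
so Theorem~\ref{thm:model1viastolarsky} yields
$$\var(p(\bsy,t)) = \frac{1}{N^2}\sum_{k=0}^{N-1}\sum_{\ell=0}^{N-1}\sigma_d(C_2(\bsx_k,\bsx_\ell;t)) - \hat p_1(t)^2.$$
Next I would reduce the double sum using permutation symmetry. By Lemma~\ref{lem:volintersection}, $\sigma_d(C_2(\bsx_k,\bsx_\ell;t)) = V_2(u;t,d)$ depends on the pair only through $u=\bsx_k^\tran\bsx_\ell$, which for binary data depends only on the swap distance $r$ via $u=u(r)$ from~\eqref{eq:ur}. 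The $N$ permuted vectors are the orbit of $\bsx_0$ under the symmetric group acting on coordinates, so the inner sum $\sum_{\ell}\sigma_d(C_2(\bsx_k,\bsx_\ell;t))$ is the same for every $k$, and it suffices to count, for $\bsx_k=\bsx_0$ fixed, how many $\bsx_\ell$ sit at each swap distance $r$. To go from $\bsx_0$ to a vector at swap distance $r$ one selects $r$ of the $m_0$ negative coordinates and $r$ of the $m_1$ positive coordinates to interchange, giving $\binom{m_0}{r}\binom{m_1}{r}$ choices, for $0\le r\le\minm$.

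Combining these pieces,
$$\frac{1}{N^2}\sum_{k,\ell=0}^{N-1}\sigma_d(C_2(\bsx_k,\bsx_\ell;t)) = \frac{1}{N^2}\cdot N\sum_{r=0}^{\minm}\binom{m_0}{r}\binom{m_1}{r}V_2(u(r);t,d),$$
which inserted above delivers the stated variance formula. I do not foresee a substantive obstacle: the only place that requires a modest check is the combinatorial count of pairs at swap distance $r$ and the observation that the inner sum is independent of $k$ because permutations act transitively on the orbit. Everything else is bookkeeping on top of the already-proved Theorem~\ref{thm:model1viastolarsky} and Lemma~\ref{lem:volintersection}.
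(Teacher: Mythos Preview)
Your proposal is correct and follows essentially the same route as the paper: the mean comes from Proposition~\ref{prop:unbiased}, the variance from Theorem~\ref{thm:model1viastolarsky}, and the double sum is collapsed via the permutation symmetry and the swap-distance count $\binom{m_0}{r}\binom{m_1}{r}$, exactly as in the discussion leading up to~\eqref{eq:phat1viastolarsky3}. The paper's formal proof is in fact a single sentence recording that count, so your write-up simply makes explicit the steps the paper leaves to the surrounding text.
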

\begin{proof}
There are ${m_0\choose r}{m_1\choose r}$
permuted points $\bsx_i$ at swap distance $r$ from $\bsx_0$.
\end{proof}

\section{A finer approximation to the $p$-value}\label{sec:finerapprox}

In the previous section, we studied the distribution of permutation
$p$-values $p(\bsy,t)$ with spherical cap centers $\bsy\sim\dustd(\sd)$
and heights $t=\hat\rho$. 
In this section, we use reference distribution $2$ to obtain
 a finer approximation to $p(\bsy_0,\hat\rho)$
by studying the distribution of the $p$-values with
centers $\bsy$  satisfying the constraint 
$\left<\bsy, \bsx_0 \right> = \left<\bsy_0,
  \bsx_0 \right> = \hat\rho$.
That is $\bsy$ has reference distribution $2$, which
is $\dustd(\bby_2)$.

Our methods also let us impose the constraint
$\left<\bsy, \bsx_c \right> = \left<\bsy_0,
  \bsx_c \right> \equiv \tilde{\rho}$ for any
$c=0,1,2,\dots,N-1$ that we like.
Conditioning on $\langle\bsy,\bsx_c\rangle =\tilde\rho$
eliminates many irrelevant $\bsy$ from consideration.
In addition to our estimate $\hat p_2$ obtained
by $\bsx_c=\bsx_0$, we consider a second choice.
It is to choose $\bsx_c$ to be the
closest permutation of $\bsx_0$ to $\bsy_0$. 
That is $c = {\arg \max_k}\left<\bsy_0, \bsx_k \right>$.


For an index $c\in\{0,1,\dots,N-1\}$
conditioning as above leads to
\begin{align}\label{eq:ptilde}
\tilde p_c =
\e_2( p(\bsy,\hat\rho) )
=
\e_1( p(\bsy,\hat\rho)\mid \bsy^\tran\bsx_c = \bsy_0^\tran\bsx_c),
\end{align}
and our two special cases are
\begin{align}\label{eq:phat2}
\hat{p}_2 & \equiv \tilde p_0,\quad\text{and}\quad
\hat{p}_3  \equiv \tilde p_c,\quad\text{where}\quad c=\arg\max_{0\le k<N}\langle\bsy_0,\bsx_k\rangle.
\end{align}
For an illustration of reference distribution~\ref{mod:two} see Figure~\ref{fig:mod2}.

Notice that $\hat p_2$ cannot go below $1/N$ because all of the points $\bsy$ that
it includes have $\bsx_0\in C(\bsy;\hat\rho)$. In fact $\bsx_0$ is on the boundary
of this spherical cap. Since the true value satisfies $p\ge1/N$, having $\hat p_2\ge1/N$
is a desirable property.
Similarly, $\hat p_3\ge1/N$ because then $\bsx_c$ is in general an interior
point of $C(\bsy,\hat\rho)$. We expect that $\hat p_3$ should
be more conservative than $\hat p_2$ and we see this numerically in
Section~\ref{sec:experimental}.

\begin{figure}[t]
  \centering
  \includegraphics[scale = 0.35]{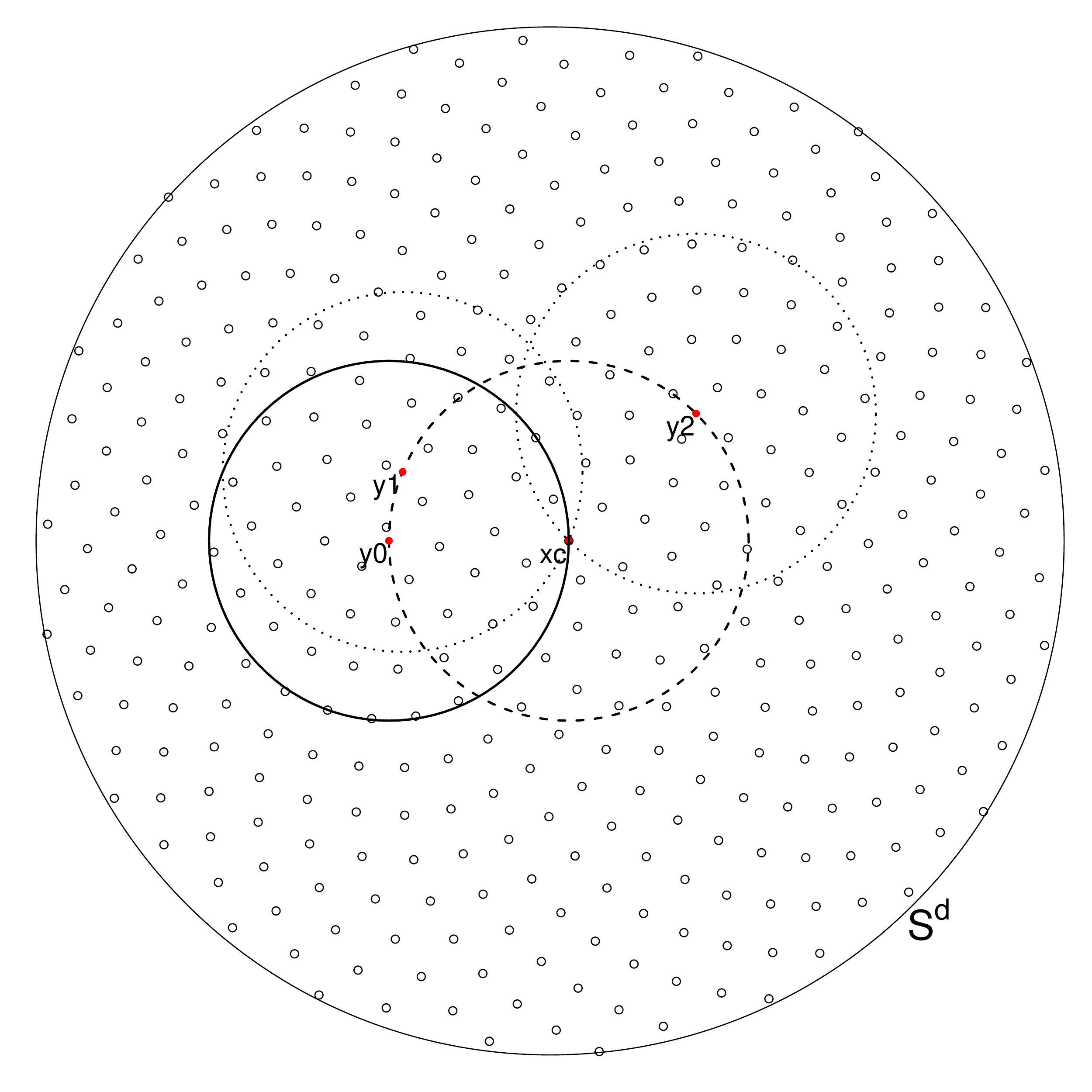}
  \caption{\label{fig:mod2}
Illustration for reference distribution~\ref{mod:two}.
The original response vector is $\bsy_0$ with $\bsy_0^\tran\bsx_0=\hat\rho$
and $\bsx_0$ marked $\bold{xc}$.
We consider alternative $\bsy$ uniformly distributed on the surface of $C(\bsx_0;\hat\rho)$ (dashed circle)
with examples $\bsy_1$ and $\bsy_2$. Around each such $\bsy_j$ there is a spherical cap
of height $\hat\rho$ that just barely includes $\bsx_c=\bsx_0$.
The small open circles are permuted points $\bsx_k$.
The fraction of those open circles that belong to the spherical cap $C(\bsy,\hat\rho)$
is $p(\bsy,\hat\rho)$. We use $\hat p_2 = \e_2(p(\bsy,\hat\rho))$
and find an expression for $\e_2( (\hat p_2-p(\bsy,\hat\rho))^2)$.
}
\end{figure}


From Proposition \ref{prop:moment}, we can get our estimate $\tilde p_c$ and its
mean squared error by finding single and double inclusion probabilties for $\bsy$.

To compute $\tilde p_c$ we need to sum $N$
values $\Pr( \bsy\in C(\bsx_k;t)\mid \bsy^\tran\bsx_c=\tilde\rho)$
and for $\tilde p_c$ to be useful we must compute it in
$o(N)$ time.
The computations are feasible in the binary case
($X_i$ at two levels), which we now focus on.

Let $u_j = \bsx_j^{\tran}\bsx_0$ for $j = 1, 2$, and let $u_3 = \bsx_1^{\tran}\bsx_2$.
Let the projection of $\bsy$ on $\bsx_c$ be $\bsy=\tilde\rho\bsx_c+\sqrt{1-\tilde\rho^2}\bsy^*$.
Then the single and double point inclusion probabilities under reference
distribution~\ref{mod:two} are
\begin{align}\label{eq:P1def}
P_1(u_1, \tilde{\rho}, \hat{\rho})
&= \int_{\sdmo} \one(\left<\bsy, \bsx_{1} \right>
\ge \hat{\rho})\rd \sigma_{d-1}(\bsy^{*}),\quad\text{and} \\
P_2(u_1, u_2, u_3,\tilde{\rho},\hat{\rho}) &=
\int_{\sdmo} \one(\left<\bsy, \bsx_1 \right>
\ge \hat{\rho})\one(\left<\bsy, \bsx_2 \right>
\ge \hat{\rho})\rd \sigma_{d-1}(\bsy^{*}) \label{eq:P2}
\end{align}
where $\hat\rho=\langle \bsx_0,\bsy_0\rangle$.
If two permutations of $\bsx_0$ are at swap distance
$r$, then their inner product is $u(r)=1-r(m_0^{-1}+m_1^{-1})$ from equation~\eqref{eq:ur}.

\begin{lemma}\label{lem:P1}
Let the projection of $\bsx_1$ onto $\bsx_c$
be $\bsx_1 = u_1\bsx_c +\sqrt{1-u_1^2}\bsx_1^*$. Then the single point inclusion probability from~\eqref{eq:P1def} is
\begin{align}\label{eq:P1}
P_1(u_1,\tilde\rho,\hat\rho)=
\begin{cases}
\one(\tilde{\rho}u_1 \ge \hat{\rho}), & u_1 = \pm 1 \text{ or }
\tilde{\rho} = \pm 1\\
\sigma_{d-1}(C(\bsx_1^{*},\rho^*)), & u_1  \in (-1,1), \tilde{\rho} \in (-1 ,1)
\end{cases}
\end{align}
where $\rho^* = (\hat{\rho}-\tilde{\rho}u_1)/{\sqrt{(1-\tilde{\rho}^2)(1-u_1^2)}}$.
\end{lemma}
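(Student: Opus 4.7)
The plan is to reduce $P_1$ to a probability about the inner product of two unit vectors on a lower-dimensional sphere by using the orthogonal decomposition with respect to $\bsx_c$. First I would write out both the integration variable and the target vector in that decomposition:
$$\bsy = \tilde\rho\,\bsx_c + \sqrt{1-\tilde\rho^{2}}\,\bsy^{*},\qquad \bsx_1 = u_1\bsx_c + \sqrt{1-u_1^{2}}\,\bsx_1^{*},$$
where $\bsy^{*},\bsx_1^{*}$ are unit vectors in the hyperplane orthogonal to $\bsx_c$ (the fact that $\bsx_1^{*}$ has unit length is forced by $\|\bsx_1\|=1$, and similarly for $\bsy^{*}$). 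Substituting and using $\langle\bsx_c,\bsy^{*}\rangle=\langle\bsx_c,\bsx_1^{*}\rangle=0$ gives
$$\langle\bsy,\bsx_1\rangle=\tilde\rho u_1+\sqrt{(1-\tilde\rho^{2})(1-u_1^{2})}\,\langle\bsy^{*},\bsx_1^{*}\rangle.$$

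Next I would split into the two cases declared in the statement. In the degenerate case where $u_1=\pm1$ or $\tilde\rho=\pm1$, one of the square-root factors vanishes, so the inner product collapses to the constant $\tilde\rho u_1$ independent of $\bsy^{*}$. The indicator is therefore identically $\one(\tilde\rho u_1\ge\hat\rho)$ on all of $\sdmo$, and integrating against the normalized measure $\sigma_{d-1}$ yields the first line of \eqref{eq:P1}. In the non-degenerate case $u_1,\tilde\rho\in(-1,1)$, the coefficient of $\langle\bsy^{*},\bsx_1^{*}\rangle$ is strictly positive, so $\langle\bsy,\bsx_1\rangle\ge\hat\rho$ is equivalent to $\langle\bsy^{*},\bsx_1^{*}\rangle\ge \rho^{*}$ with $\rho^{*}=(\hat\rho-\tilde\rho u_1)/\sqrt{(1-\tilde\rho^{2})(1-u_1^{2})}$. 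Since $\bsy^{*}$ is uniform on the $(d-1)$-dimensional unit sphere orthogonal to $\bsx_c$ and $\bsx_1^{*}$ is a fixed unit vector in that subspace, the integral is exactly the normalized volume of the spherical cap $C(\bsx_1^{*},\rho^{*})$, giving the second line of \eqref{eq:P1}.

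The only mild subtlety is justifying that $\bsy^{*}$ really is uniformly distributed on a $(d-1)$-sphere under reference distribution~\ref{mod:two}; this is exactly what the definition of that model asserts and is consistent with the decomposition formula \eqref{eq:decomp}, so no separate calculation is needed. The case $\rho^{*}\notin[-1,1]$ is absorbed automatically into the spherical-cap volume convention ($0$ if $\rho^{*}>1$, $1$ if $\rho^{*}<-1$) recorded just before Theorem~\ref{thm:stolarsky}, so no further case splitting is required. There is no real obstacle here; the substance of the lemma is just the identification of the constraint slice of $\bby_2$ with a lower-dimensional sphere and the rewriting of the inner product constraint as a single cap inequality on that sphere.
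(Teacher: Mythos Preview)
Your proposal is correct and follows exactly the same approach as the paper: decompose both $\bsy$ and $\bsx_1$ with respect to $\bsx_c$, compute $\langle\bsy,\bsx_1\rangle=\tilde\rho u_1+\sqrt{(1-\tilde\rho^{2})(1-u_1^{2})}\,\langle\bsy^{*},\bsx_1^{*}\rangle$, and then read off the two cases. Your write-up is in fact more detailed than the paper's, which simply records this formula for the inner product and says the result ``easily follows.''
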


\begin{proof}
The projection of $\bsy$ onto $\bsx_c$ is
$\bsy = \tilde{\rho}\bsx_c + \sqrt{1 - \tilde{\rho}^2}\bsy^{*}$. 
Now
\begin{equation*}
\left<\bsy, \bsx_1\right>=
  \begin{cases}
\tilde{\rho} u_1, & u_1 = \pm 1 \text{ or }
\tilde{\rho} = \pm 1\\
\tilde{\rho}u_1 +
\sqrt{1-\tilde{\rho}^2}\sqrt{1-u_1^2}\left<\bsy^{*},\bsx_1^{*} \right>,
& u_1 \in (-1,1), \tilde{\rho} \in (-1 ,1)
  \end{cases}
\end{equation*}
and the result easily follows.
\end{proof}


We can now give a computable expression for $\tilde p_c$
 and hence for $\hat p_2$ and $\hat p_3$.

\begin{theorem}\label{thm:defphat2}
For $-1 \le \hat\rho \le 1$ and $ -1 \le \tilde{\rho} \le 1$,
  \begin{align}    \label{eq:phatc}
\tilde p_c =\e_2(p(\bsy, \hat{\rho})) = \frac{1}{N}\sum\limits_{r = 0}^{\minm}
{m_0 \choose r}{m_1 \choose r}P_1(u(r), \tilde{\rho}, \hat{\rho})
  \end{align}
where $u(r)$ is given in equation \eqref{eq:ur},  $P_1(u(r),
\tilde{\rho}, \hat{\rho})$ is given in
equation \eqref{eq:P1} and $\tilde\rho = \bsx_c^\tran\bsy_0$.
\end{theorem}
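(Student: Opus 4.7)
The plan is to compute $\e_2(p(\bsy,\hat\rho))$ directly from the definition $p(\bsy,\hat\rho) = (1/N)\sum_{k=0}^{N-1}\one(\bsx_k\in C(\bsy;\hat\rho))$ using linearity of expectation, then reduce the resulting single--point inclusion probabilities to the kernel $P_1$ of Lemma~\ref{lem:P1}, and finally collapse the sum over permutations into a sum over swap distances.

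First, by linearity of expectation under reference distribution~\ref{mod:two},
\begin{align*}
\e_2(p(\bsy,\hat\rho)) = \frac{1}{N}\sum_{k=0}^{N-1} \Pr_2\bigl(\langle\bsy,\bsx_k\rangle \ge \hat\rho\bigr).
\end{align*}
Under reference distribution~\ref{mod:two} we may write $\bsy = \tilde\rho\bsx_c + \sqrt{1-\tilde\rho^2}\bsy^*$ with $\bsy^*$ uniform on the unit sphere inside the hyperplane orthogonal to $\bsx_c$. I would then observe that the summand depends on $\bsx_k$ only through $u_k \equiv \langle\bsx_c,\bsx_k\rangle$: decomposing $\bsx_k = u_k \bsx_c + \sqrt{1-u_k^2}\,\bsx_k^{*}$, the inner product $\langle\bsy,\bsx_k\rangle$ is a function of $(u_k,\tilde\rho,\langle\bsy^*,\bsx_k^*\rangle)$ alone, and by rotational symmetry of $\bsy^*$ within the orthogonal complement of $\bsx_c$, the probability $\Pr_2(\langle\bsy,\bsx_k\rangle\ge\hat\rho)$ depends on $\bsx_k$ only through $u_k$. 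By the definition~\eqref{eq:P1def} this probability is exactly $P_1(u_k,\tilde\rho,\hat\rho)$ given by Lemma~\ref{lem:P1}.

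Finally, I would group the $N$ terms in the sum by swap distance $r$ between $\bsx_k$ and $\bsx_c$. Since both $\bsx_c$ and each $\bsx_k$ are permutations of the same binary vector with $m_0$ negative and $m_1$ positive entries, the number of $k$ at swap distance $r$ from $\bsx_c$ equals ${m_0\choose r}{m_1\choose r}$ (the same counting argument used in Theorem~\ref{thm:phat1}, since the bijection that sends $\bsx_c$ to any fixed reference permutation preserves swap distances); moreover for such $k$ we have $u_k = u(r)$ by~\eqref{eq:ur}. Substituting yields~\eqref{eq:phatc}.

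The only mild obstacle is verifying that the swap-distance count ${m_0\choose r}{m_1\choose r}$ is measured relative to $\bsx_c$ rather than $\bsx_0$, but this follows immediately from the symmetry that sends one permutation to another. Everything else is bookkeeping; the content of the theorem is really the collapse of the $N$-term sum into a $(\minm+1)$-term sum together with the identification of the single-point inclusion probability with $P_1$.
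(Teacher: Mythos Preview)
Your argument is correct and is exactly the paper's approach: the paper's proof is the single sentence ``There are ${m_0 \choose r}{m_1 \choose r}$ permutations of $\bsx_0$ at swap distance $r$ from $\bsx_c$,'' relying implicitly on Proposition~\ref{prop:moment} and the definition~\eqref{eq:P1def} just as you do. Your write-up simply makes those implicit steps explicit.
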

\begin{proof}
There are ${m_0 \choose r}{m_1 \choose r}$ permutations of $\bsx_0$ at swap distance $r$ from $\bsx_c$.
\end{proof}

From~\eqref{eq:phatc} we see that $\tilde p_c$ can be computed
in $O(\minm)$ work.
The mean squared error for $\tilde p_c$ is more complicated and will be more expensive.
We need the double point inclusion probabilities and then we
need to count the number of pairs $\bsx_k,\bsx_\ell$ forming
a given set of swap distances among $\bsx_k,\bsx_\ell,\bsx_c$.

\begin{lemma}\label{lem:P2}
For $j=1,2$, let $r_j$ be the swap distance of
$\bsx_j$ from $\bsx_c$
and let $r_3$ be the swap distance between $\bsx_1$ and $\bsx_2$.
Let $u_1,u_2,u_3$ be the corresponding inner products given
by~\eqref{eq:ur}.
If there are equalities among $\bsx_1,$ $\bsx_2$ and $\bsx_c$,
then the double point inclusion probability from~\eqref{eq:P2} is
\begin{align*}
P_2(u_1, u_2, u_3,\tilde{\rho},\hat{\rho}) =
  \begin{cases}
\one(\tilde{\rho} \ge \hat{\rho}), &\bsx_1 = \bsx_2 = \bsx_c\\
\one(\tilde{\rho}\ge \hat{\rho}) P_1(u_2, \tilde{\rho}, \hat{\rho}), &\bsx_1 = \bsx_c \ne \bsx_2 \\
\one(\tilde{\rho}\ge \hat{\rho}) P_1(u_1, \tilde{\rho}, \hat{\rho}), &\bsx_2 = \bsx_c \ne \bsx_1 \\
 P_1(u_2, \tilde{\rho}, \hat{\rho}), &\bsx_1 = \bsx_2 \neq \bsx_c.
\end{cases}
\end{align*}
If $\bsx_1$, $\bsx_2$ and $\bsx_c$ are three distinct points
with $\min(u_1,u_2) = -1$, then
$$
P_2(u_1, u_2, u_3,\tilde{\rho},\hat{\rho}) =
\begin{cases}
\one(-\tilde{\rho}\ge \hat{\rho}) P_1(u_2, \tilde{\rho}, \hat{\rho}), & u_1=-1\\
\one(-\tilde{\rho}\ge \hat{\rho}) P_1(u_1, \tilde{\rho}, \hat{\rho}), & u_2=-1.
\end{cases}
$$
Otherwise $-1< u_1, u_2<1$, and then
\begin{align*}
&P_2(u_1, u_2, u_3,\tilde{\rho},\hat{\rho}) \\
&=\begin{cases}
\one(\tilde{\rho}u_1 \ge \hat{\rho})\one(\tilde{\rho}u_2 \ge
\hat{\rho}), &\tilde{\rho} = \pm 1\\[1.2ex]
\int_{-1}^1 \frac{\omega_{d-2}}{\omega_{d-1}}(1-t^2)^{\frac{d-1}{2} - 1} \one(t \ge \rho_1)  \one(tu_3^{*} \ge \rho_2) \rd t,
& \tilde{\rho} \neq \pm 1, u_3^{*} = \pm 1\\[1.2ex]
\int_{-1}^1 \frac{\omega_{d-2}}{\omega_{d-1}}(1-t^2)^{\frac{d-1}{2} - 1} \one(t \ge \rho_1)
\sigma_{d-2}\Bigl(C\bigl(\bsx_2^{**}, \frac{\rho_2 -
  tu_3^{*}}{\sqrt{1-t^2}\sqrt{1-u_3^{*2}}}\bigr)\Bigr) \rd t, & \tilde{\rho}
\neq \pm 1, |u_3^{*}| < 1
\end{cases}
\end{align*}
where
\begin{align}\label{eq:rho12}
u_3^{*} = \frac{u_3 - u_1u_2}{\sqrt{1-u_1^2}\sqrt{1-u_2^2}} \quad\text{and}\quad
\rho_j = \frac{\hat{\rho}  -\tilde{\rho}u_j}{\sqrt{1-\tilde{\rho}^2}\sqrt{1-u_j^2}},\ j=1,2
\end{align}
and $\bsx_2^{**}$ is the residual from the
projection of $\bsx_2^*$ on $\bsx_1^*$.
\end{lemma}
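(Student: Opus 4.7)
The plan is to reduce the integral~\eqref{eq:P2} defining $P_2$ to successive one-dimensional integrals by iterated orthogonal projection, mirroring the argument used for $P_1$ in Lemma~\ref{lem:P1} and for $V_2$ in Lemma~\ref{lem:volintersection}. Throughout, write $\bsy=\tilde\rho\bsx_c+\sqrt{1-\tilde\rho^2}\bsy^{*}$ as in reference distribution~\ref{mod:two}, so that $\left<\bsy,\bsx_j\right>=\tilde\rho u_j+\sqrt{1-\tilde\rho^2}\sqrt{1-u_j^2}\left<\bsy^{*},\bsx_j^{*}\right>$ whenever $|u_j|<1$ and $|\tilde\rho|<1$, where $\bsx_j=u_j\bsx_c+\sqrt{1-u_j^2}\bsx_j^{*}$ is the projection of $\bsx_j$ onto $\bsx_c$.

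First I handle the degenerate cases. If two or more of $\bsx_1,\bsx_2,\bsx_c$ coincide, each indicator in~\eqref{eq:P2} either becomes the deterministic indicator $\one(\tilde\rho\ge\hat\rho)$ (when the point equals $\bsx_c$, since then $\left<\bsy,\bsx_c\right>=\tilde\rho$) or reduces to the single-point probability $P_1$ from Lemma~\ref{lem:P1}. Similarly, if $u_j=-1$ (so $\bsx_j=-\bsx_c$) then $\left<\bsy,\bsx_j\right>=-\tilde\rho$ is deterministic and the other indicator gives $P_1$; and if $\tilde\rho=\pm 1$ then $\bsy=\pm\bsx_c$ and each indicator is deterministic, giving the product $\one(\tilde\rho u_1\ge\hat\rho)\one(\tilde\rho u_2\ge\hat\rho)$. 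Each of these matches the stated case expressions directly.

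For the generic case $-1<u_1,u_2<1$ and $|\tilde\rho|<1$, the two events in~\eqref{eq:P2} become $\left<\bsy^{*},\bsx_j^{*}\right>\ge\rho_j$ for $j=1,2$, with $\rho_j$ as in~\eqref{eq:rho12}. Since $\bsy^{*}$ is uniform on a sphere isomorphic to $\sdmo$, I now project $\bsy^{*}$ onto $\bsx_1^{*}$ using the decomposition formula~\eqref{eq:decomp}: $\bsy^{*}=t\bsx_1^{*}+\sqrt{1-t^2}\bsy^{**}$ with $\bsy^{**}$ uniform on a subset isomorphic to $\sdmt$, and the induced density $(\omega_{d-2}/\omega_{d-1})(1-t^2)^{(d-1)/2-1}\rd t$. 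The first indicator is then $\one(t\ge\rho_1)$. For the second indicator I further decompose $\bsx_2^{*}=u_3^{*}\bsx_1^{*}+\sqrt{1-u_3^{*2}}\bsx_2^{**}$, where $u_3^{*}=\left<\bsx_1^{*},\bsx_2^{*}\right>$ equals the expression in~\eqref{eq:rho12} by the identity $u_3=u_1u_2+\sqrt{1-u_1^2}\sqrt{1-u_2^2}u_3^{*}$ (obtained by expanding $\left<\bsx_1,\bsx_2\right>$ in the $\bsx_c$-projection). Expanding $\left<\bsy^{*},\bsx_2^{*}\right>=tu_3^{*}+\sqrt{1-t^2}\sqrt{1-u_3^{*2}}\left<\bsy^{**},\bsx_2^{**}\right>$ converts the second indicator into a spherical cap indicator on $\bsy^{**}$, and integrating yields the cap volume $\sigma_{d-2}(C(\bsx_2^{**},(\rho_2-tu_3^{*})/(\sqrt{1-t^2}\sqrt{1-u_3^{*2}})))$. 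The boundary subcase $u_3^{*}=\pm 1$ reduces the second indicator to $\one(tu_3^{*}\ge\rho_2)$ and gives the middle formula.

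The main obstacle is purely bookkeeping: making sure that each degenerate configuration (coincident points, antipodal points, $\tilde\rho=\pm 1$, $u_3^{*}=\pm 1$) is treated as a genuine boundary of the iterated projection rather than slipped into an integral with a spurious factor of $(1-u_j^2)^{-1/2}$ or $(1-u_3^{*2})^{-1/2}$. Once the projections are set up carefully and each limiting case is checked separately, the formulas follow from Fubini together with the surface-measure decomposition~\eqref{eq:decomp} applied twice.
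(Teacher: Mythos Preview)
Your proof is correct and follows essentially the same approach as the paper: handle the degenerate configurations (coincident points, $u_j=-1$, $\tilde\rho=\pm1$) directly, then in the generic case project $\bsy^{*}$ onto $\bsx_1^{*}$ via~\eqref{eq:decomp} and decompose $\bsx_2^{*}$ along $\bsx_1^{*}$ to reduce the second indicator to a spherical cap on $\sdmt$. The paper's proof is organized into the same four cases and carries out exactly this iterated projection.
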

\begin{proof}
See Section~\ref{sec:proof:lem:P2}.
\end{proof}

Next we consider the swap configuration among $\bsx_1$, $\bsx_2$ and $\bsx_c$.
Let $\bsx_j$ be at swap distance $r_j$ from $\bsx_c$, for $j=1,2$.
We let $\delta_1$ be the number of positive components of $\bsx_c$ that are
negative in both $\bsx_1$ and $\bsx_2$. Similarly, $\delta_2$ is the number of
negative components of $\bsx_c$ that are positive in both $\bsx_1$ and $\bsx_2$.
See Figure~\ref{fig:deltas}.
The swap distance between $\bsx_1$ and $\bsx_2$ is
then $r_3 = r_1 + r_2 - \delta_1 - \delta_2$.

\begin{figure}\centering
\begin{align*}
  \bsx_c& = (\,\overbrace{+, +,+, +, +, \cdots, +, +, +, + }^{m_1},\
  \overbrace{ -, -, -, -, \cdots, - ,-, -, -, -}^{m_0}\,)\\
  \bsx_1& = (\,\overbrace{+ , +, +, \cdots, + , \underbrace{- , -,\minuses}_{{r_1}}}^{m_1},\
  \overbrace{\underbrace{\plusess}_{r_1}, \minuses}^{m_0}\,)\\
  \bsx_2& = (\,\overbrace{\pluses, \underbrace{-,-, \underbrace{\cdots, -}_{\delta_1}}_{{r_2}}, \pluses}^{m_1},\
  \overbrace{\minuses, \underbrace{\underbrace{+, +, \cdots}_{\delta_2},
      +}_{r_2}, \minuses}^{m_0}\,)
\end{align*}
\caption{\label{fig:deltas}
Illustration of $r_1$, $r_2$, $\delta_1$ and $\delta_2$.
The points $\bsx_c$, $\bsx_1$ and $\bsx_2$
each have $m_0$ negative and $m_1$ positive components.
For $j=1,2$ the swap distance between $\bsx_j$ and $\bsx_c$ is $r_j$.
There are
$\delta_1$ positive components of $\bsx_c$ where both $\bsx_1$ and
$\bsx_2$ are negative, and $\delta_2$ negative components of $\bsx_c$
where both $\bsx_j$ are positive.
}
\end{figure}

Let $\bsr = (r_1, r_2)$, $\bsdel = (\delta_1, \delta_2)$
and $\minr = \min(r_1,r_2)$.
We will study values of $r_1,r_2,r_3,\delta_1,\delta_2$ ranging over
the following sets:
\begin{align*}
r_1, r_2 & \in R = \{1,\dots,\minm\}\\
\delta_1 & \in D_1(\bsr) = \{\max(0,r_1+r_2-m_0),\dots,\minr\}\\
\delta_2 & \in D_2(\bsr) = \{\max(0,r_1+r_2-m_1),\dots,\minr\},\quad\text{and}\\
r_3 &\in R_3(\bsr) =\{\max(1,r_1+r_2-2\minr),\dots,\min(r_1+r_2,\minm,m_0+m_1-r_1-r_2)\}.
\end{align*}
Whenever the lower bound for one of these sets exceeds the
upper bound, we take the set to be empty, and a sum over it to be zero.
Note that while $r_1=0$ is possible, it corresponds to $\bsx_1=\bsx_c$
and we will handle that case specially, excluding it from $R$.

The number of pairs $(\bsx_\ell,\bsx_k)$ with a fixed  $\bsr$ and $\bsdel$ is
\begin{align}\label{eq:coeff}
c(\bsr, \bsdel) =  {m_0 \choose \delta_1}{m_1 \choose \delta_2}{m_0 - \delta_1 \choose r_1 - \delta_1}{m_1
- \delta_2 \choose r_1 - \delta_2}{m_0 - r_1 \choose r_2 - \delta_1}{m_1 - r_1
\choose r_2 - \delta_2}.
\end{align}
Then the number of configurations given $r_1$, $r_2$ and $r_3$ is
\begin{align}\label{eq:coeffsummed}
c(r_1,r_2,r_3)
= \sum_{\delta_1\in D_1}\sum_{\delta_2\in D_2} c(\bsr,\bsdel)\one( r_3 = r_1+r_2-\delta_1-\delta_2).
\end{align}

We can now get an expression for the expected mean squared error under
reference distribution~\ref{mod:two} which combined with Theorem~\ref{thm:defphat2}
for the mean provides an expression for the mean squared error of $\tilde p_c$.
\begin{theorem}\label{thm:phat2}
For $-1 \le \hat\rho \le 1$ and $-1 \le \tilde{\rho} \le 1$,
\begin{equation}    \label{eq:phat2-second}
\begin{aligned}
\e_2(p(\bsy, \hat{\rho})^2) &= \frac{1}{N^2}\bigg[\one( \tilde{\rho} \ge \hat{\rho}) +
2\sum\limits_{r = 1}^{\minm} {m_0 \choose r}{m_1 \choose r}P_2(1, u(r), u(r),\tilde{\rho}, \hat{\rho})  \\
& + \sum\limits_{r = 1}^{\minm} {m_0 \choose r}{m_1\choose r}P_1(u(r),\tilde{\rho}, \hat{\rho}) \\
& +\sum\limits_{r_1 \in R}\sum\limits_{r_2 \in R}
\sum\limits_{r_3 \in  R_3(\bsr)} c(r_1,r_2,r_3)
P_2(u_1,u_2,u_3,\tilde{\rho}, \hat{\rho})\bigg]
\end{aligned}
  \end{equation}
where $P_2(\cdot)$ is the double inclusion probability in~\eqref{eq:P2} and $c(r_1,r_2,r_3)$
is the configuration count in~\eqref{eq:coeffsummed}.
\end{theorem}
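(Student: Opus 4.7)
The plan is to start from the general second moment formula in Proposition \ref{prop:moment}, specialized to reference distribution \ref{mod:two}:
$$\e_2(p(\bsy, \hat\rho)^2) = \frac{1}{N^2} \sum_{k, \ell = 0}^{N-1} \Pr\bigl(\bsy \in C_2(\bsx_k, \bsx_\ell; \hat\rho) \bigm| \bsy^\tran \bsx_c = \tilde\rho\bigr).$$
The key observation is that under reference distribution \ref{mod:two} the summand depends on $(\bsx_k, \bsx_\ell)$ only through the three inner products $u_1 = \bsx_k^\tran \bsx_c$, $u_2 = \bsx_\ell^\tran \bsx_c$, and $u_3 = \bsx_k^\tran \bsx_\ell$, which in the binary case are determined by swap distances via \eqref{eq:ur}. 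Hence each summand equals $P_2(u_1, u_2, u_3, \tilde\rho, \hat\rho)$ from \eqref{eq:P2}, and Lemma \ref{lem:P2} furnishes an explicit value in every case.

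I would then partition the $N^2$ ordered pairs into four groups matching the four terms inside the brackets of \eqref{eq:phat2-second}. The single pair $\bsx_k = \bsx_\ell = \bsx_c$ contributes $\one(\tilde\rho \ge \hat\rho)$ by the first case of Lemma \ref{lem:P2}. Pairs in which exactly one of $\bsx_k, \bsx_\ell$ equals $\bsx_c$ split into two symmetric families, each indexed by the swap distance $r \in \{1, \dots, \minm\}$ of the non-$\bsx_c$ element; there are $\binom{m_0}{r}\binom{m_1}{r}$ such elements, and the two middle cases of Lemma \ref{lem:P2} both evaluate to $P_2(1, u(r), u(r), \tilde\rho, \hat\rho)$, producing the factor of $2$. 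Pairs with $\bsx_k = \bsx_\ell \ne \bsx_c$ at swap distance $r \ge 1$ contribute the single-inclusion term $P_1(u(r), \tilde\rho, \hat\rho)$ by the fourth case of Lemma \ref{lem:P2}, weighted by $\binom{m_0}{r}\binom{m_1}{r}$.

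The substantive work is the final sum, over pairs with $\bsx_k, \bsx_\ell, \bsx_c$ all distinct. Each such pair can be parametrized by $(r_1, r_2, \delta_1, \delta_2)$ as in Figure \ref{fig:deltas}: $r_j$ is the swap distance of $\bsx_j$ from $\bsx_c$, while $\delta_1$ and $\delta_2$ record, respectively, how many sign flips from $\bsx_c$'s positive and negative blocks are shared by $\bsx_1$ and $\bsx_2$. The identity $r_3 = r_1 + r_2 - \delta_1 - \delta_2$ is immediate from the diagram. The main obstacle is to carry out the combinatorial count that yields the six-factor product $c(\bsr, \bsdel)$ of \eqref{eq:coeff}: one chooses the two shared-flip sets first, then the additional flips that belong only to $\bsx_1$ and then only to $\bsx_2$, within the positive and negative blocks of $\bsx_c$ separately; one must also verify that the admissible ranges of $\delta_1, \delta_2$ are $D_1(\bsr), D_2(\bsr)$ and that the induced range of $r_3$ is $R_3(\bsr)$. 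Once this count is in hand, summing $c(\bsr, \bsdel)$ over the fiber $\{\delta_1 + \delta_2 = r_1 + r_2 - r_3\}$ gives $c(r_1, r_2, r_3)$ as in \eqref{eq:coeffsummed}, and pairing this with the generic $P_2$ formula of Lemma \ref{lem:P2} delivers the last bracketed term of \eqref{eq:phat2-second}, completing the proof.
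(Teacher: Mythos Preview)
Your proposal is correct and follows essentially the same approach as the paper: start from the second-moment identity in Proposition~\ref{prop:moment}, recognize each summand as $P_2(u_1,u_2,u_3,\tilde\rho,\hat\rho)$, and partition the ordered pairs $(\bsx_k,\bsx_\ell)$ into the same four cases handled by Lemma~\ref{lem:P2}, with the final case counted via the combinatorial formula $c(\bsr,\bsdel)$ of~\eqref{eq:coeff}. The paper's proof simply cites~\eqref{eq:coeff} rather than rederiving the six binomial factors, but your sketch of that count is accurate and the rest matches.
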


\begin{proof}
See Section~\ref{sec:proof:thm:phat2} of the Appendix.
\end{proof}

In our experience, the cost of computing $\e_2(p(\bsy,\hat\rho)^2)$ under 
reference distribution~\ref{mod:two}
is dominated by the cost of the $O(\minm^3)$ integrals required to get
the $P_2(\cdot)$ values in~\eqref{eq:phat2-second}.
The cost also includes an $O(\minm^4)$ component because $c(r_1,r_2,r_3)$
is also a sum of $O(\minm)$ terms, but it did not dominate the computation
at the sample sizes we looked at (up to several hundred).

\section{Generalized Stolarsky Invariance}\label{sec:genstolarsky}
Here we obtain the results for reference distribution~\ref{mod:two} in a different way,
by extending the work by \cite{brau:dick:2013}.
They introduced a weight on the height $t$ of the spherical cap in the average.
We now apply a weight function to the inner product $\langle\bsz,\bsx_c\rangle$
between the center $\bsz$ of the spherical cap and a special point~$\bsx_c$.

\begin{theorem}\label{thm:location-weighted-stolarsky}
Let $\bsx_0, \dots, \bsx_{N-1}$ be arbitrary points in $\sd$
and $v(\cdot)$ and $h(\cdot)$ be positive functions in $L_2([-1,1])$.
Then for any $\bsx' \in  \sd$, the following equation holds,
  \begin{equation}\label{eq:16}
  \begin{split}
&    \int_{-1}^1v(t) \int_{\sd} h(\left <\bsz, \bsx' \right>)\biggl| \sigma_d(C(\bsz;t)) -
      \frac{1}{N}\sum\limits_{k=0}^{N-1}\one_{C(\bsz;t)}(\bsx_k)\biggr|^2
    \rd \sigma_d(\bsz) \rd t\\
=&\frac{1}{N^2}\sum\limits_{k,\ell=0}^{N-1}\Kv(\bsx_k, \bsx_\ell) + \int_{\sd}\int_{\sd}\Kv(\bsx,\bsy)\rd \sigma_d(\bsx)\rd \sigma_d(\bsy) \\
&- \frac{2}{N}\sum\limits_{k= 0}^{N-1}\int_{\sd}\Kv(\bsx, \bsx_k)\rd \sigma_d(\bsx)
  \end{split}
  \end{equation}
where $\Kv:\sd \times \sd \to \real$ is a reproducing kernel defined by
\begin{equation}\label{eq:generalized-kernel}
\Kv(\bsx, \bsy) = \int_{-1}^1 v(t)\int_{\sd}h(\left<\bsz, \bsx' \right>
)\one_{C(\bsz;t)}(\bsx)\one_{C(\bsz;t)}(\bsy)\rd \sigma_d(\bsz)\rd t.
\end{equation}
\end{theorem}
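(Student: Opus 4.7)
The plan is to mimic the Brauchart--Dick proof of Theorem~\ref{thm:weighted-stolarsky}, but carrying the additional weight $h(\langle \bsz,\bsx'\rangle)$ alongside $v(t)$ throughout. The identity is purely algebraic once we expand the squared discrepancy and swap the order of integration; no RKHS machinery is actually needed to obtain \eqref{eq:16}, though it is convenient to describe $\Kv$ as a reproducing kernel in a separate discussion.

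First I would expand the square inside the left-hand side as
\begin{align*}
\Bigl|\sigma_d(C(\bsz;t)) - \tfrac{1}{N}\sum_{k=0}^{N-1}\one_{C(\bsz;t)}(\bsx_k)\Bigr|^2
&= \sigma_d(C(\bsz;t))^2 \\
&\quad - \tfrac{2}{N}\sigma_d(C(\bsz;t))\sum_{k=0}^{N-1}\one_{C(\bsz;t)}(\bsx_k) \\
&\quad + \tfrac{1}{N^2}\sum_{k,\ell=0}^{N-1}\one_{C(\bsz;t)}(\bsx_k)\one_{C(\bsz;t)}(\bsx_\ell).
\end{align*}
Then I would rewrite each factor $\sigma_d(C(\bsz;t))$ as an integral of an indicator: $\sigma_d(C(\bsz;t)) = \int_{\sd}\one_{C(\bsz;t)}(\bsx)\rd\sigma_d(\bsx)$. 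This turns the squared term into a double integral of $\one_{C(\bsz;t)}(\bsx)\one_{C(\bsz;t)}(\bsy)$ over $\bsx,\bsy\in\sd$, and the cross term into a single integral involving $\one_{C(\bsz;t)}(\bsx)\one_{C(\bsz;t)}(\bsx_k)$.

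Next I would multiply through by $v(t)\,h(\langle \bsz,\bsx'\rangle)$ and integrate over $(\bsz,t)\in\sd\times[-1,1]$. By Fubini (justified because $v,h\in L_2([-1,1])$ are bounded in the relevant sense, the indicators are bounded, and the measures are finite), the order of integration can be swapped so that the innermost double integral over $t$ and $\bsz$ becomes precisely the definition of $\Kv$ in \eqref{eq:generalized-kernel}. Concretely, the squared term yields $\int_{\sd}\int_{\sd}\Kv(\bsx,\bsy)\rd\sigma_d(\bsx)\rd\sigma_d(\bsy)$, the cross term yields $-\tfrac{2}{N}\sum_k \int_{\sd}\Kv(\bsx,\bsx_k)\rd\sigma_d(\bsx)$, and the double-sum term yields $\tfrac{1}{N^2}\sum_{k,\ell}\Kv(\bsx_k,\bsx_\ell)$, matching the three summands on the right-hand side of \eqref{eq:16}.

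The main obstacle is essentially bookkeeping rather than anything deep: one must keep track of which integral variable pairs with which weight, and confirm Fubini applies so that the triple integrals $\int\int\int v(t)\,h(\langle\bsz,\bsx'\rangle)\one_{C(\bsz;t)}(\bsx)\one_{C(\bsz;t)}(\bsy)\rd\sigma_d(\bsz)\rd t\,\rd\sigma_d(\bsx)$ can be rearranged into $\int\Kv(\bsx,\bsy)\rd\sigma_d(\bsx)$. A minor additional point is checking positivity of $\Kv$ (needed to call it a reproducing kernel rather than merely a symmetric $L_2$ function): symmetry is immediate from \eqref{eq:generalized-kernel}, and nonnegativity follows because $v,h\ge 0$ and the integrand is a product of indicators, so $\Kv$ arises as an inner product of the indicator functions $\bsz\mapsto\one_{C(\bsz;t)}(\bsx)$ in the weighted space $L_2(\sd\times[-1,1],\,h(\langle\bsz,\bsx'\rangle)v(t)\rd\sigma_d(\bsz)\rd t)$, which immediately gives positive-definiteness as a kernel on $\sd$.
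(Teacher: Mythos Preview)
Your proposal is correct, and it takes a genuinely different route from the paper. The paper follows the Brauchart--Dick RKHS template: it first verifies that $\Kv$ is a reproducing kernel, constructs the associated Hilbert space $\Hv$ with an explicit inner product $\langle f_1,f_2\rangle_{\Kv}=\int\int v(t)^{-1}h(\langle\bsz,\bsx'\rangle)^{-1}g_1(\bsz,t)g_2(\bsz,t)\rd\sigma_d(\bsz)\rd t$, and then computes the squared norm of the representer $\mathcal{R}(\Hv;\cdot)=\int_{\sd}\Kv(\cdot,\bsy)\rd\sigma_d(\bsy)-N^{-1}\sum_k\Kv(\cdot,\bsx_k)$ in two different ways, equating them to obtain \eqref{eq:16}. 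Your argument bypasses the Hilbert space entirely: you expand the square, replace each $\sigma_d(C(\bsz;t))$ by $\int_{\sd}\one_{C(\bsz;t)}(\bsx)\rd\sigma_d(\bsx)$, and apply Fubini/Tonelli to recognise the inner $(\bsz,t)$-integrals as $\Kv$. This is shorter and more transparent for the bare identity; the paper's approach buys the conceptual interpretation of the left side as a worst-case integration error in $\Hv$, which ties into the broader QMC framework but is not logically needed for \eqref{eq:16}. One small remark: your Fubini justification (``$v,h\in L_2$ are bounded in the relevant sense'') is slightly off since $L_2$ functions need not be bounded, but Tonelli applies directly because the integrand is nonnegative, so the interchange is valid without any boundedness.
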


\begin{proof}
See Section~\ref{sec:proof:thm:location-weighted-stolarsky} of the Appendix.
\end{proof}

\begin{remark}
We will use this result for $\bsx'=\bsx_c$, where $\bsx_c$
is one of the $N$ given points.
The theorem holds for general $\bsx'\in\sd$, but the result is computationally and statistically
more attractive when $\bsx'=\bsx_c$.
\end{remark}

We now show that the second moment in Theorem \ref{thm:phat2} holds as a special limiting case of Theorem \ref{thm:location-weighted-stolarsky}.
In addition to $v_{\beps}$ from Section~\ref{sec:stolarskys}
we introduce $\bseta = (\eta_1,\eta_2)\in(0,1)^2$ and
\begin{align}\label{eq:heta}
h_{\bseta}(s) = \eta_{2} +
\frac{1}{\eta_{1}(\frac{\omega_{d-1}}{\omega_{d}}(1-s^2)^{d/2-1})}\one(\tilde{\rho}\le s \le
\tilde{\rho} + \eta_{1})
\end{align}

Using these results we can now establish the following theorem, which provides the second moment of $p(\bsy, \hat{\rho})$ under reference distribution~\ref{mod:two}.
\begin{theorem}\label{thm:invariancemodeltwosecondmoment}
Let $\bsx_0\in\sd$ be the centered and scaled vector from
an experiment with binary $X_i$ of which $m_0$ are negative and
$m_1$ are positive.
Let $\bsx_0,\bsx_1,\dots,\bsx_{N-1}$
be the $N={m_0+m_1\choose m_0}$ distinct permutations of $\bsx_0$.
Let $\bsx_c$ be one of the $\bsx_k$ and define $\tilde p_c$ by~\eqref{eq:ptilde}.
Then
\begin{align*}
\e_2(\tilde p_c(\bsy, \hat{\rho})^2) 
&= \frac{1}{N^2}\sum\limits_{k,\ell=0}^{N-1}\int_{\sdmo}\one(\left<\bsy, \bsx_k \right>
\ge \hat{\rho})\one(\left<\bsy, \bsx_\ell \right>
\ge \hat{\rho}) \rd \sigma_{d-1}(\bsy^{*})
\end{align*}
where $\bsy = \tilde{\rho}\bsx_c + \sqrt{1 -
  \tilde{\rho}^2}\bsy^{*}$. 
\end{theorem}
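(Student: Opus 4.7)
The plan is to obtain Theorem~\ref{thm:invariancemodeltwosecondmoment} as a limiting specialization of the generalized Stolarsky identity~\eqref{eq:16} in Theorem~\ref{thm:location-weighted-stolarsky}, applied to the full permutation set $\bsx_0, \ldots, \bsx_{N-1}$ with $\bsx' = \bsx_c$ and with mollifier weights $v = v_{\beps}$ from~\eqref{eq:veps} and $h = h_{\bseta}$ from~\eqref{eq:heta}. Both sides of~\eqref{eq:16} are then passed to the iterated limits $\epsilon_2, \eta_2 \to 0$ followed by $\epsilon_1, \eta_1 \to 0$, in the same spirit as the derivation of Theorem~\ref{thm:model1viastolarsky}.

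The first step is to verify two mollifier limits. The family $v_{\beps}$ is an approximate identity at $t = \hat\rho$, so $\int_{-1}^1 v_{\beps}(t)\phi(t)\rd t \to \phi(\hat\rho)$ for continuous $\phi$. The weight $h_{\bseta}$ is engineered so that the spherical Jacobian $(\omega_{d-1}/\omega_d)(1-s^2)^{d/2-1}$ appearing in~\eqref{eq:decomp} with axis $\bsx_c$ exactly cancels the reciprocal in the spike of $h_{\bseta}$. Writing $\bsz = s\bsx_c + \sqrt{1-s^2}\bsz^{*}$, this gives, for any bounded continuous $f: \sd \to \real$,
\begin{equation*}
\int_{\sd} h_{\bseta}(\langle \bsz, \bsx_c \rangle) f(\bsz)\rd\sigma_d(\bsz) \longrightarrow \int_{\sdmo} f\bigl(\tilde\rho\bsx_c + \sqrt{1-\tilde\rho^2}\bsy^{*}\bigr)\rd\sigma_{d-1}(\bsy^{*}).
\end{equation*}
Combining these two limits, the generalized reproducing kernel $\Kvhepsc(\bsx_k, \bsx_\ell)$ from~\eqref{eq:generalized-kernel} tends to
\begin{equation*}
K^{*}(\bsx_k, \bsx_\ell) = \int_{\sdmo} \one(\langle\bsy, \bsx_k\rangle \ge \hat\rho)\one(\langle\bsy, \bsx_\ell\rangle \ge \hat\rho)\rd\sigma_{d-1}(\bsy^{*}),
\end{equation*}
with $\bsy = \tilde\rho\bsx_c + \sqrt{1-\tilde\rho^2}\bsy^{*}$, and the analogous expression for arbitrary sphere arguments.

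Next I would apply these limits on both sides of~\eqref{eq:16}. The LHS collapses to $\int_{\sdmo}|\hat p_1(\hat\rho) - p(\bsy, \hat\rho)|^2\rd\sigma_{d-1}(\bsy^{*})$; expanding the square and using $\sigma_{d-1}(\sdmo) = 1$ together with the definition~\eqref{eq:ptilde} of $\tilde p_c$ yields $\hat p_1(\hat\rho)^2 - 2\hat p_1(\hat\rho)\tilde p_c + \e_2(p(\bsy,\hat\rho)^2)$. For the RHS, Fubini together with the identity $\int_{\sd}\one(\langle\bsy, \bsz\rangle \ge \hat\rho)\rd\sigma_d(\bsz) = \hat p_1(\hat\rho)$ (cap volume depends only on the height) implies that $\int_{\sd}\int_{\sd} K^{*}\rd\sigma_d\rd\sigma_d$ reduces to $\hat p_1(\hat\rho)^2$ and that $(1/N)\sum_k \int_{\sd} K^{*}(\bsz, \bsx_k)\rd\sigma_d(\bsz)$ reduces to $\hat p_1(\hat\rho)\tilde p_c$, while the remaining term $(1/N^2)\sum_{k,\ell} K^{*}(\bsx_k, \bsx_\ell)$ is exactly the right-hand side of the theorem. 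The $\hat p_1(\hat\rho)^2$ and $-2\hat p_1(\hat\rho)\tilde p_c$ contributions cancel across the equality, leaving precisely the asserted identity for $\e_2(p(\bsy,\hat\rho)^2)$.

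The main obstacle is two-fold: justifying that the iterated limits may be interchanged with the outer integrals and finite sums (since $v_{\beps}$ and $h_{\bseta}$ blow up pointwise while remaining integrable), and tracking the bookkeeping so that the three RHS terms of~\eqref{eq:16} conspire in the limit to match the two extraneous pieces produced by expanding the square on the LHS. As in the proof of Theorem~\ref{thm:model1viastolarsky} in Section~\ref{sec:proof:thm:model1viastolarsky}, I would take $\epsilon_2, \eta_2 \to 0$ first (removing the additive backgrounds) and only afterwards $\epsilon_1, \eta_1 \to 0$ (sharpening the spikes), invoking dominated convergence with the bound $|\hat p_1 - p|^2 \le 1$ and the Jacobian cancellation that keeps the spike-weighted integrands uniformly bounded.
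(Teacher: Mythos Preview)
Your proposal is correct and follows essentially the same route as the paper: apply Theorem~\ref{thm:location-weighted-stolarsky} with $\bsx'=\bsx_c$, $v=v_{\beps}$, $h=h_{\bseta}$, pass to the mollifier limits on each of the four terms, and cancel the $\hat p_1(\hat\rho)^2$ and $2\hat p_1(\hat\rho)\tilde p_c$ pieces. The paper organizes this into four lemmas (one for the LHS and one for each RHS term) and takes the limits in the order $\beps\to0$ then $\bseta\to0$ rather than your $(\epsilon_2,\eta_2)\to0$ then $(\epsilon_1,\eta_1)\to0$, but this is cosmetic.
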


\begin{proof}
The proof uses Theorem~\ref{thm:location-weighted-stolarsky} with 
a sequence of $h$ defined in \eqref{eq:heta} and $v$ defined in~\eqref{eq:veps}.
See Section~\ref{sec:proof:thm:invariancemodeltwosecondmoment}
of the appendix.
\end{proof}

This result shows that we can use the invariance principle
to derive the second moment of $p(\bsy,\hat\rho)$ under 
reference distribution~\ref{mod:two}.
The mean square in Theorem~\ref{thm:invariancemodeltwosecondmoment}
is consistent with the second moment equation~\eqref{eq:moment2} in Proposition~\ref{prop:moment}.

\section{Two-sided p-values}\label{sec:two-sided}

In statistical applications it is more usual to report two-sided $p$-values. 
A conservative approach is to use $2\min(p,1-p)$ where $p$ is a one-sided $p$-value.
A sharper choice is
 \begin{equation*}
     p = \frac{1}{N}\sum\limits_{k = 0}^{N-1}\one
       (|\bsx_k^{\tran}\bsy_0| \ge |\hat{\rho}|).
 \end{equation*}
This choice changes our estimate under reference distribution~\ref{mod:two}.
It also changes the second moment of our estimate $\hat p_1$.

The two-sided version of the 
estimate $\hat{p}_1(\hat{\rho})$   is
$2\sigma_d(C(\bsy;|\hat{\rho}|))$, the same as if we had doubled a 
one-sided estimate. Also $\e_1(p)=\hat p_1$ in the two-sided case.
We now consider the mean square for
the two-sided estimate under reference distribution~\ref{mod:one}.
For $\bsx_1, \bsx_2\in\sd$ with $u = \bsx_1^{\tran}\bsx_2$, the two-sided double inclusion probability under
reference distribution~\ref{mod:one} is
\begin{align*}
\tilde{V}_2(u;t,d) = \int_{\sd}\one(|\bsz^{\tran}\bsx_1| \ge |t|)\one(|\bsz^{\tran}
  \bsx_2| \ge |t|)\rd \sigma_d(\bsz).
\end{align*}
Writing $\one (|\bsz^{\tran}\bsx_i|
    \ge |t|) = \one (\bsz^{\tran}\bsx_k
    \ge |t|) + \one (\bsz^{\tran}(-\bsx_k)
    \ge |t|) $ for $k = 1,2$ and expanding the product, we get
$$\tilde{V}_2(u;t, d) = 2V_2(u; |t|, d) + 2V_2(-u; |t|, d).$$
By replacing $V_2(u, t, d)$ with $\tilde{V}_2(u, t, d)$ and
$\hat{p}_1(t)$ with $2\sigma_d(C(\bsy;|t|))$ in Theorem~\ref{thm:phat1}, we get the
variance of two-sided p-values under reference distribution~\ref{mod:one}.

To obtain corresponding formulas under reference distribution~\ref{mod:two}, 
we use the usual
notation. Let $u_j = \bsx_j^{\tran}\bsx_0$ for $j = 1, 2$, and let $u_3 = \bsx_1^{\tran}\bsx_2$.
Let the projection of $\bsy$ on $\bsx_c$ be
$\bsy=\tilde\rho\bsx_c+\sqrt{1-\tilde\rho^2}\bsy^*$.
Now 
\begin{align}\label{eq:P1tildedef}
\tilde{P}_1(u_1, \tilde{\rho}, \hat{\rho})
&= \int_{\sdmo} \one\bigl(|\langle\bsy, \bsx_{1} \rangle|
\ge |\hat{\rho}|\bigr)\rd \sigma_{d-1}(\bsy^{*}),\quad\text{and,} \\
\tilde{P}_2(u_1, u_2, u_3,\tilde{\rho},\hat{\rho}) &=
\int_{\sdmo} \one\bigl(|\langle\bsy, \bsx_1 \rangle|
\ge |\hat{\rho}|\bigr)\one\bigl(|\langle\bsy, \bsx_2 \rangle|
\ge |\hat{\rho}|\bigr)\rd \sigma_{d-1}(\bsy^{*})\label{eq:P2tilde}
\end{align}
are the appropriate single and double inclusion probabilities.

After writing $\one(|\langle\bsy, \bsx_k \rangle|
\ge |\hat{\rho}|) = \one(\langle\bsy, \bsx_k \rangle
\ge |\hat{\rho}|) + \one(\langle\bsy, -\bsx_k \rangle
\ge |\hat{\rho}|)$ for $k = 1,2 $ and expanding the product, we get
\begin{align*}
  \tilde{P}_1(u_1, \tilde{\rho}, \hat{\rho}) &= P_1(u_1, \tilde{\rho},
  |\hat{\rho}|) + P_1(-u_1, \tilde{\rho}, |\hat{\rho}|),\quad\text{and}\\
  \tilde{P}_2(u_1, u_2, u_3, \tilde{\rho}, \hat{\rho}) &= P_2(u_1,
  u_2, u_3, \tilde{\rho}, |\hat{\rho}|) + P_2(-u_1,
  u_2, -u_3, \tilde{\rho}, |\hat{\rho}|) \\
&+ P_2(u_1,
  -u_2, -u_3, \tilde{\rho}, |\hat{\rho}|) + P_2(-u_1,
  -u_2, u_3, \tilde{\rho}, |\hat{\rho}|).
\end{align*}

Changing  $P_1(u_1, \tilde{\rho}, \hat{\rho})$ and $P_2(u_1, u_2,u_3,
\tilde{\rho}, \hat{\rho})$ to $\tilde{P}_1(u_1, \tilde{\rho}, \hat{\rho})$
and $\tilde{P}_2(u_1, u_2,u_3,\tilde{\rho}, \hat{\rho})$ respectively
in Theorems \ref{thm:defphat2} and \ref{thm:phat2}, we get the
first and second moments for two-sided p-values under
reference distribution~\ref{mod:two}.

For a two-sided $p$-value, $\hat p_3$ is calculated with $\bsx_c$
where $\tilde{c} = {\arg \max_k}|\langle\bsy_0, \bsx_k \rangle|$. For
$m_0 = m_1$, $\tilde{c} = c = {\arg \max_k}\langle\bsy_0, \bsx_k
\rangle$, but the result may differ significantly for unequal sample
sizes.


\section{Numerical Results}\label{sec:experimental}

We consider two-sided $p$-values in this section. 
The main finding is that the root mean squared
error (RMSE) of $\hat p_2$ under reference distribution 2
is usually just a small multiple of $\hat p_2$ itself.

First we evaluate the accuracy of $\hat{p}_1$, the simple spherical
cap volume approximate $p$ value.
We considered $m_0=m_1$ in a range of values from $5$
to $200$. The values $\hat p_1$ ranged from just below $1$
to $2\times10^{-30}$.
We judge the accuracy of this estimate by its RMSE.
Under distribution~\ref{mod:one} this is
$(\e (\hat p_1(\rho)-p(\bsy,\rho))^2)^{1/2}$ for $\bsy\sim\dustd(\sd)$.
Figure~\ref{fig:step1-1} shows this RMSE decreasing towards 0 as
$\hat{p}_1$ goes to 0 with $\rho$ going to 1.  The RMSE also decreases
with increasing sample size, as we would expect from the central limit theorem.

As seen in Figures~\ref{fig:step1-1} and \ref{fig:step1-1-zoom}, the RMSE is not
monotone in $\hat p_1$. Right at $\hat p_1=1$ we know that $\mathrm{RMSE}=0$
and around $0.1$ there is a dip. The practically interesting values of $\hat p_1$
are much smaller than $0.1$, and the RMSE is monotone for them.

A problem with $\hat p_1$ is that it can approach $0$
even though $p\ge 1/N$ must hold.  
The distribution~\ref{mod:one} RMSE
does not reflect this problem.
By studying $\e_2( (\hat p_1(\rho)-p(\bsy,\rho))^2)^{1/2}$,
we get a different result.
In Figure~\ref{fig:step2-phat1-1}, the RMSE
of $\hat{p}_1$ under distribution~\ref{mod:two} reaches a 
plateau as $\hat{p}_1$ goes to 0.

The estimator $\hat p_2 = \tilde p_0$ performs better than $\hat p_1$
because it makes more use of the data, and it is never below $1/N$.
As seen in Figure~\ref{fig:step2-phat2-1},
the RMSE of $\hat p_2$ very closely matches $\hat p_2$ itself
as $\hat p_2$ decreases to zero. That is, the relative error $|\hat p_2-p|/\hat p_2$
is well behaved for small $p$-values.  
In rare event estimation, that property is known as strong efficiency \citep{blanchet2008efficient} and can be very hard to achieve.
Here as $\hat p_2$ decreases to the granularity
limit $1/N$, its RMSE actually decreases to $0$.  Eventually
the distance from $\bsy_0$ to $\bsx_0$ is below
the minimum interpoint distance among the $\bsx_k$
and then, for a one-sided test, $\hat p_2 = p = 1/N$.

The estimators $\hat{p}_1$
and $\hat p_2$, do not differ much for larger $p$-values
as seen in Figure~\ref{fig:step2-phat2-2}.  But in the limit as $\hat\rho\to1$
we see that $\hat p_1\to0$, while $\hat p_2$ approaches the granularity limit $1/N$ instead.

Figure~\ref{fig:step2-phat2-3} compares the RMSE of the two estimators
under distribution~\ref{mod:two}.
As expected, $\hat p_2$ is more accurate.
It also shows that the biggest differences occur only when $\hat{p}_1$ goes
below ${1}/{N}$.

To examine the behavior of $\hat p_2$ more closely, we plot its
coefficient of variation in Figure~\ref{fig:step2-phat2-4}.
We see that the relative uncertainty in $\hat p_2$ is not extremely
large. Even when the estimated $p$-values are as small as
$10^{-30}$ the coefficient of variation is below $5$. 

In Section \ref{sec:finerapprox}, we mentioned
another choice for $\bsx_c$. It was $\hat p_3=\tilde p_c$, where
$\bsx_c$ is the closest permutation  of $\bsx_0$ to $\bsy_0$.
Figure 2.7 in \cite{he:2016} compares $\hat p_3$ to $\hat p_2$
in some simulations.
As expected, $\hat p_3$ tends to be larger (more conservative)
than $\hat p_2$, though it does sometimes come out smaller.
Figure 2.8 of \cite{he:2016} compares the RMSE
of $\hat p_3$ to $\hat p_2$.  The upward bias of $\hat p_3$
gave it a much larger RMSE.

\begin{figure}[t!]
    \centering
    \begin{subfigure}[b]{0.45\textwidth}
        \includegraphics[width=\textwidth]{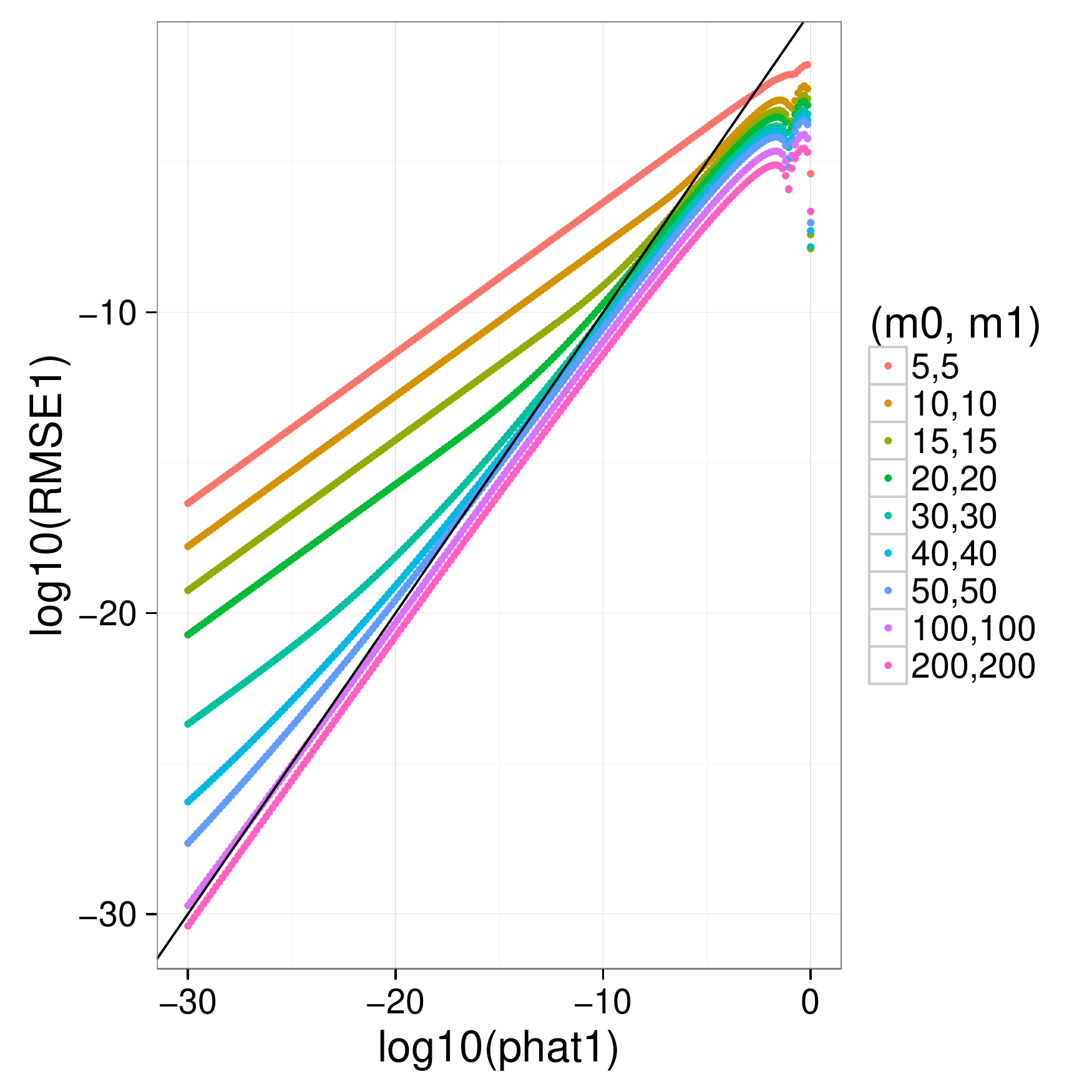}
        \caption{RMSE$_1(\hat{p}_1)$.}
        \label{fig:step1-1}
    \end{subfigure}
~
    \begin{subfigure}[b]{0.45\textwidth}
        \includegraphics[width=\textwidth]{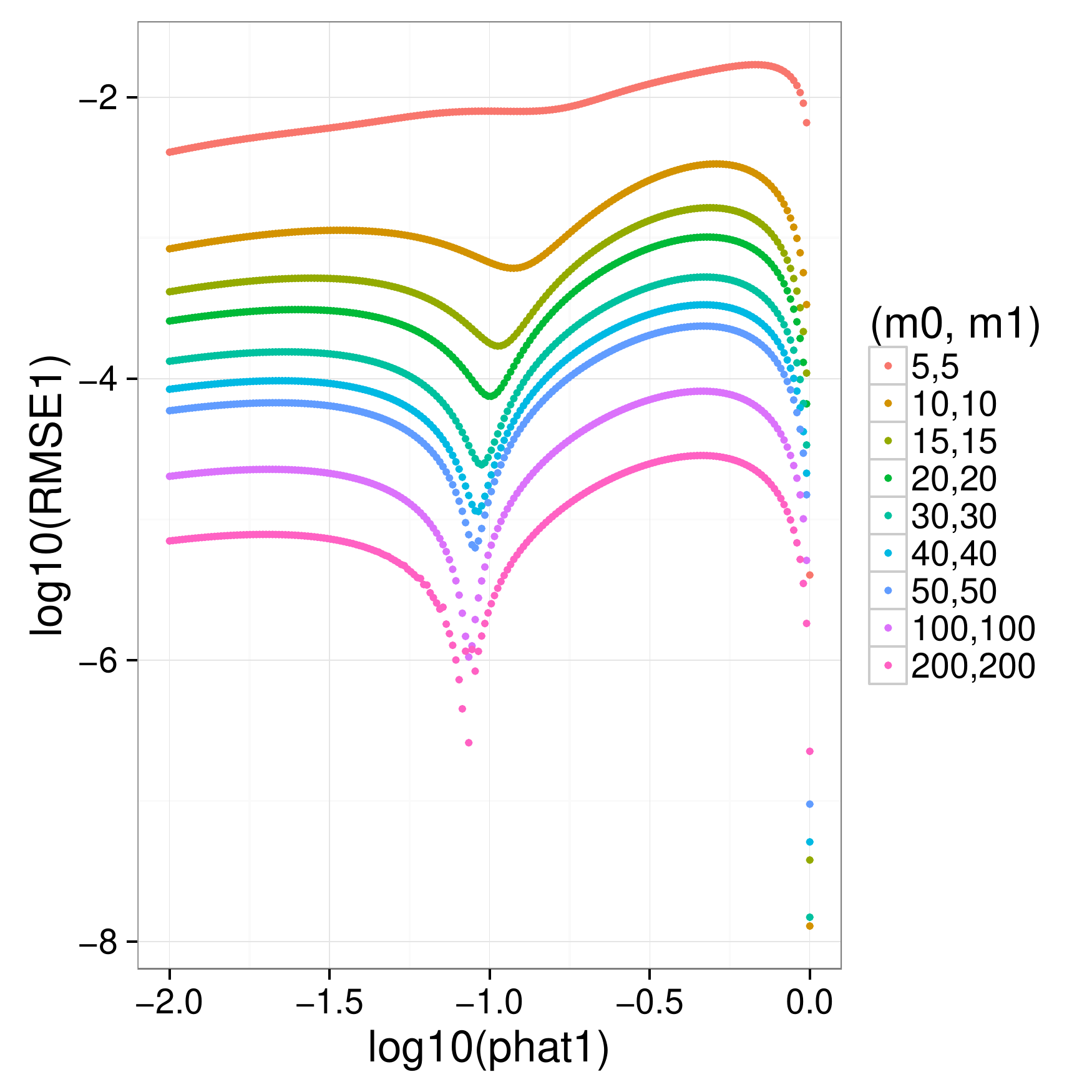}
        \caption{RMSE$_1(\hat{p}_1)$ zoomed.}
        \label{fig:step1-1-zoom}
    \end{subfigure}

    \begin{subfigure}[b]{0.45\textwidth}
        \includegraphics[width=\textwidth]{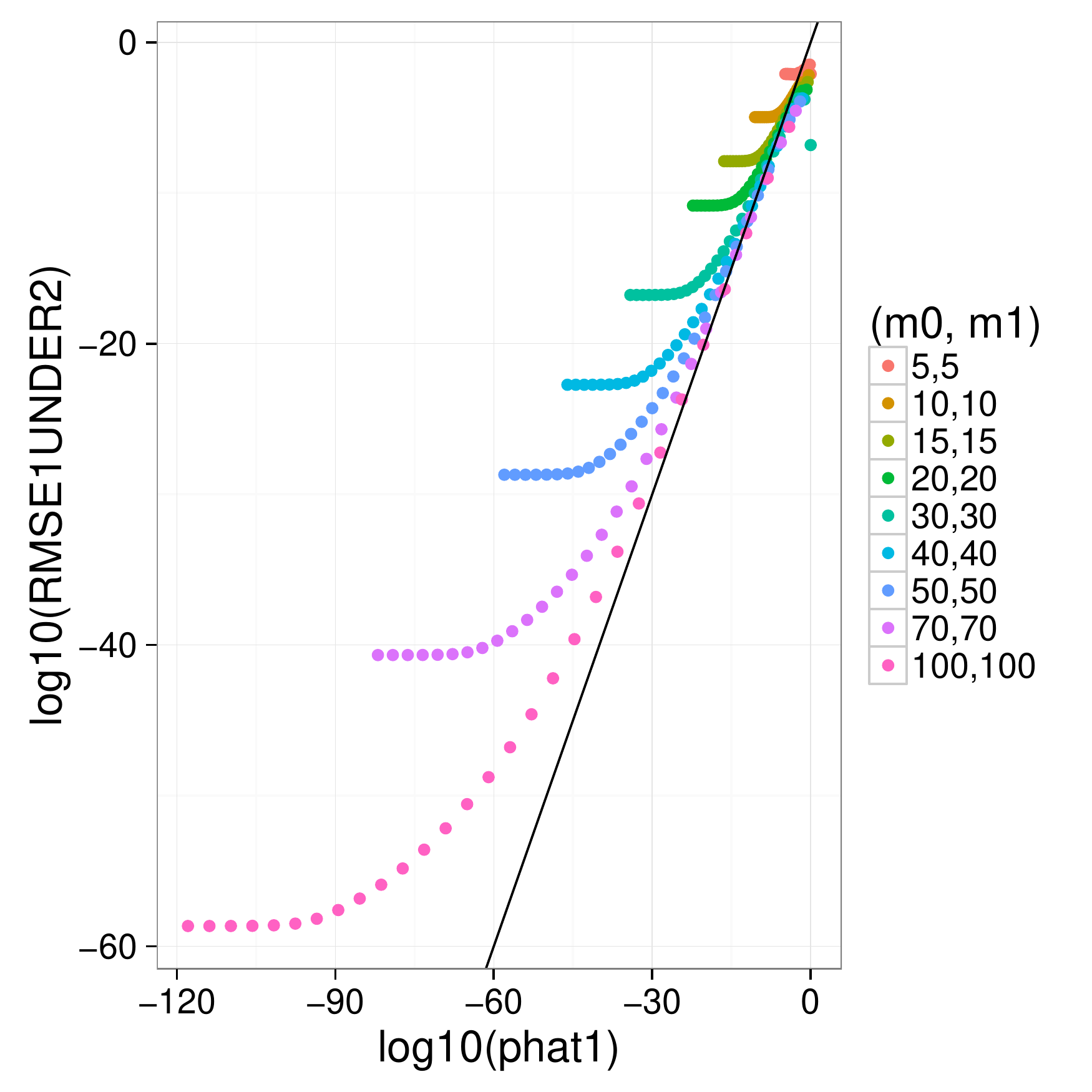}
        \caption{RMSE$_2(\hat{p}_1)$.}
        \label{fig:step2-phat1-1}
    \end{subfigure}
~
    \begin{subfigure}[b]{0.45\textwidth}
        \includegraphics[width=\textwidth]{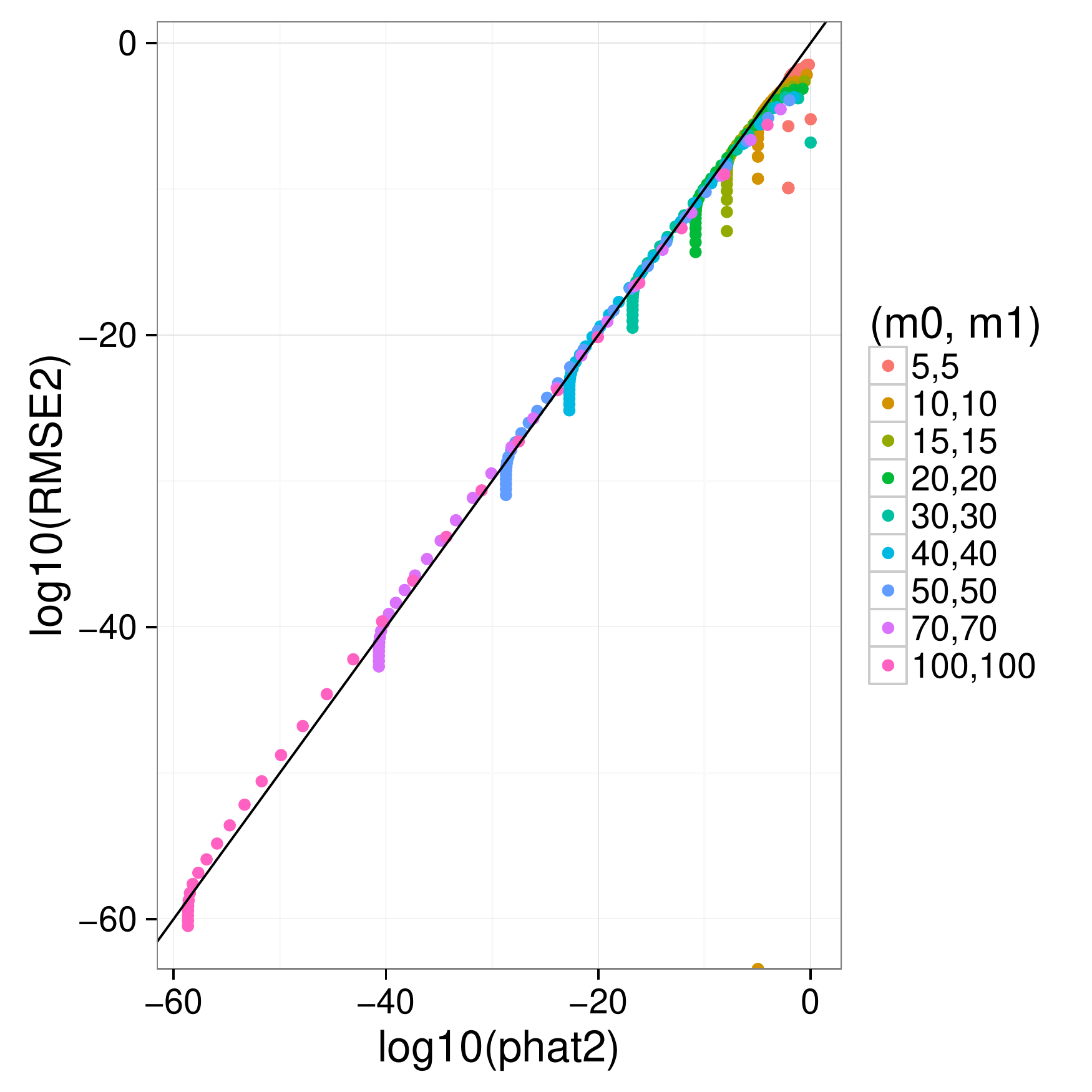}
        \caption{RMSE$_2(\hat{p}_2)$.}
        \label{fig:step2-phat2-1}
    \end{subfigure}

\caption{RMSEs for $\hat{p}_1$ and $\hat{p}_2$ under reference
distributions \ref{mod:one} and \ref{mod:two}.
The $x$-axis shows the estimate $\hat p$
as $\rho$  varies from $1$ to $0$.
Here $m_0 = m_1$. Plots with $m_0 \neq m_1$ are similar.}
\end{figure}

\begin{figure}[t]
  \centering
    \begin{subfigure}{0.45\textwidth}
        \includegraphics[width=\textwidth]{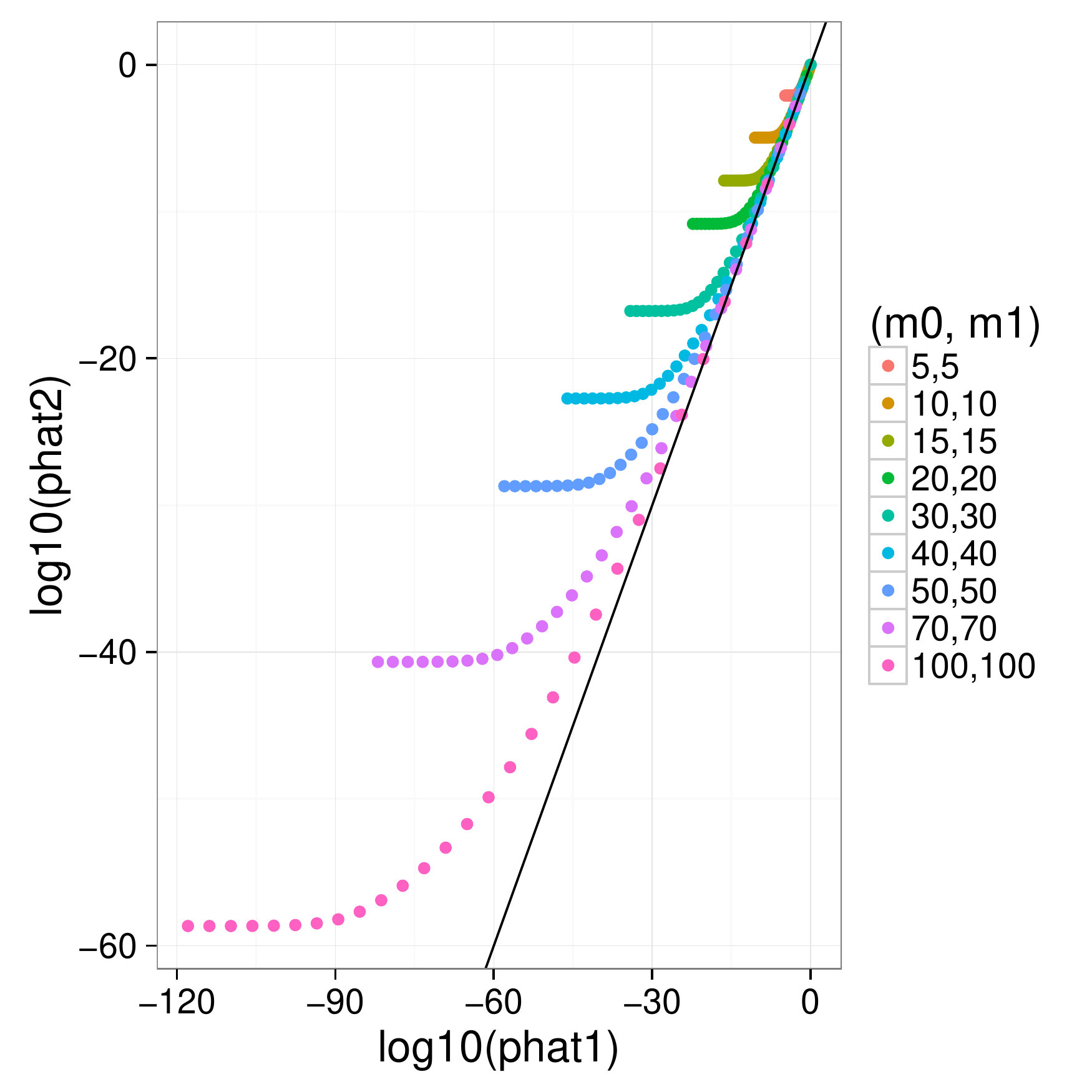}
        \caption{}
        \label{fig:step2-phat2-2}
    \end{subfigure}
~
    \begin{subfigure}{0.45\textwidth}
        \includegraphics[width=\textwidth]{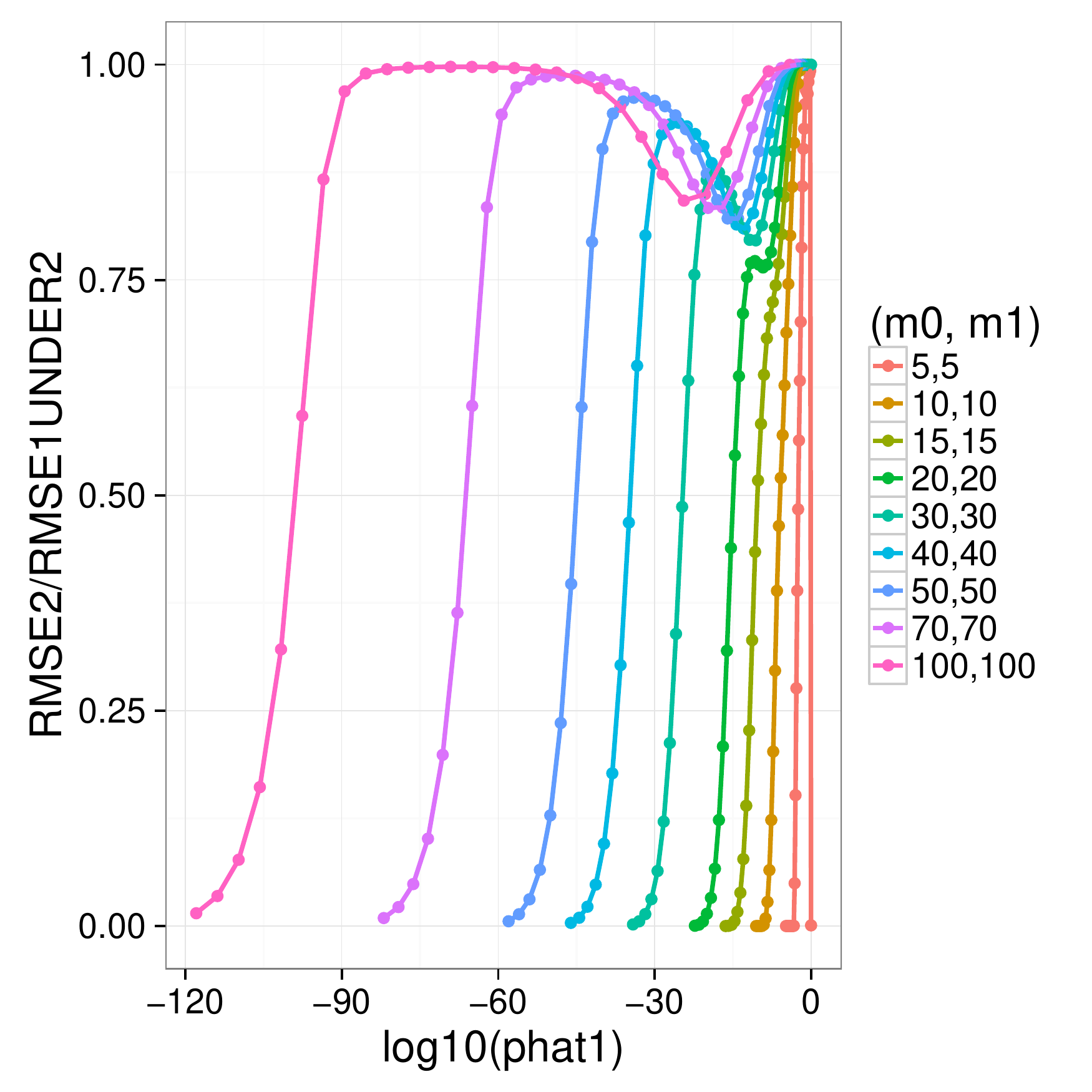}
        \caption{}
        \label{fig:step2-phat2-3}
    \end{subfigure}
  \caption{Comparison of $\hat{p}_1$ and $\hat{p}_2$. In (a), $\log_{10}(\hat{p}_2)$ is plotted
    against $\log_{10}(\hat{p}_1)$ for varying $\rho$'s. The black line
    is the 45 degree line. In (b), the ratio of RMSEs for $\hat{p}_1$
    and $\hat{p}_2$ is plotted against $\log_{10}(\hat{p}_1)$. The
    $x$-axis is $\log_{10}(\hat{p}_1)$.}
\end{figure}

\begin{figure}[t!]
  \centering
  \includegraphics[width = 0.5\textwidth]{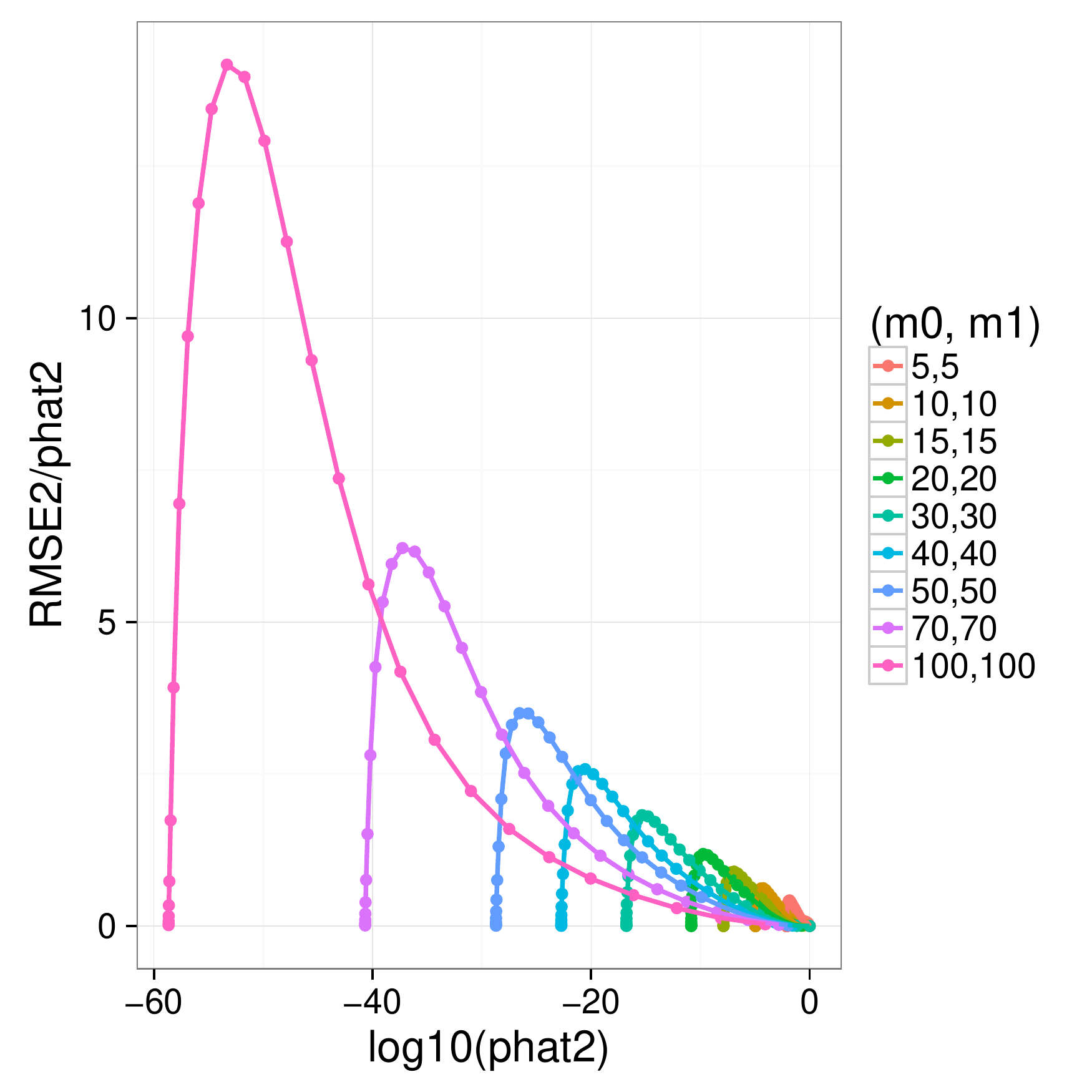}
  \caption{The coefficient of variation for $\hat{p}_2$ with varying $\rho$'s. }
\label{fig:step2-phat2-4}
\end{figure}

\section{Comparison to saddlepoint approximation}\label{sec:saddle}

The small relative error property of $\hat p_2$ is
similar to the relative error property in saddlepoint approximations.
\cite{reid:1988} surveys saddlepoint approximations and \cite{robi:1982} 
develops them for permutation tests of the linear statistics we have considered here.  
When the true $p$-value is $p$, the saddlepoint approximation  
$\hat p_{s}$ satisfies $\hat p_s = p(1+O(1/n))$.  
Because we do not know the implied constant in $O(1/n)$ or
the $n$ at which it takes effect, the saddlepoint approximation does not
provide a computable upper bound for the true permutation $p$-value $p$.  

Figure~\ref{fig:saddle-exp}
compares our estimates to each
other and those of the saddlepoint approximation, equation (1) from \cite{robi:1982}.
The simulated data have the $\dexp(1)$ distribution under the control 
condition and the $2+\dexp(1)$ distribution under the affected condition.
The sample sizes were $m_0=m_1=10$
making it feasible to compute the exact permutation $p$-value for hundreds of 
examples.
In each case we ran $500$ independent simulations. Cases with perfect separation
were excluded: the saddlepoint approximation is numerically unstable then, and one
can easily detect that the minimum $Y$ value in one group is larger than the maximum
in the other group, showing that $p=1/N$. 
In every instance we compared two-sided $p$-values.
Chapter 2 of \cite{he:2016} considers simulations from some other distributions. 
The control condition data are $t_{(5)}$, $\dnorm(0,1)$ and $\dustd(0,1)$
while the affected condition data are shifted versions of these distributions.

In these simulations, the naive spherical cap estimator $\hat p_1$,
with no good relative error properties,
is consistently least accurate and is often much smaller than the 
true $p$.  The saddlepoint estimate is very accurate but tends 
to come out smaller than the true $p$. The estimators 
$\hat p_2$ and $\hat p_3$ are less likely to be below $p$ than 
the saddlepoint estimate, and by construction, they are never
below the granularity limit.
Qualitatively similar results happened for all of the distributions.
The accuracy of all of these
$p$-value estimates tends to be better for ligher tailed $Y_i$.

\begin{figure}[t!]
  \centering
  \includegraphics[width = 0.8\textwidth]{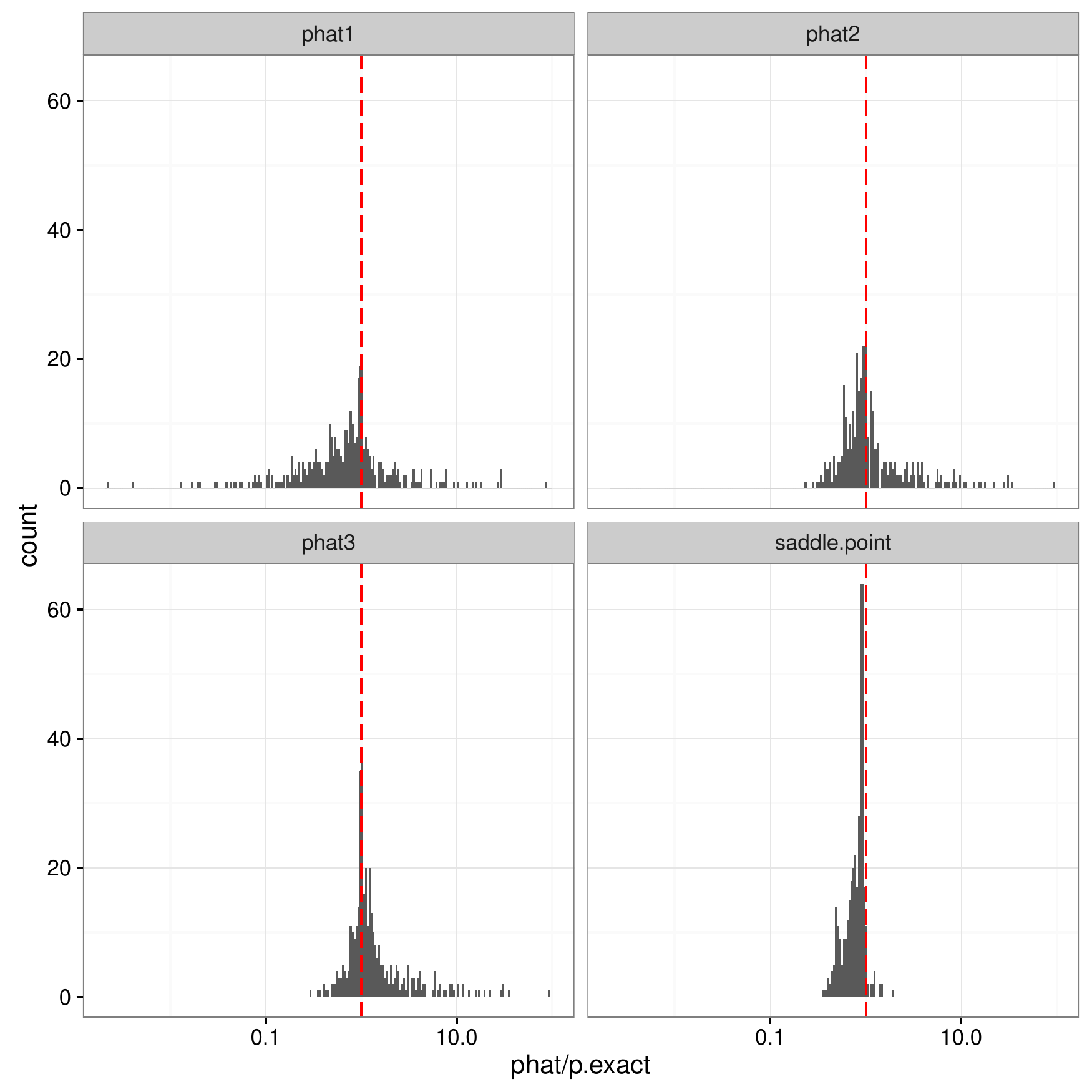}
  \caption{
Simulation results $\hat p/p$ as described in the text,
for $Y_{0,i}\simiid \mathrm{Exp}(1)$, and $Y_{1, i} \simiid \mathrm{Exp}(1)+2$.
 }
\label{fig:saddle-exp}
\end{figure}

We can also construct $Z$ scores, $Z_2=(p-\hat p_2)/\mathrm{RMSE}_2$
and a similar $Z_3$. If these take large values, then it means that $\hat p$
is too small and, moreover, that our computed RMSE does not
diagnose it.
The largest $Z$ scores we observed are in Table~\ref{tab:zvals}. 
The largest $Z$ values arose for exponential data with $p\doteq 0.89$
and $\hat p_2\doteq0.78\doteq\hat p_3$. Such large $p$-values are not very
important and so maximal $Z$ scores are also shown among estimated
$p$-values below~$0.1$.

\begin{table}\centering
\begin{tabular}{lcccc}
Dist'n $Y_{0,i}$ & $\max Z_2$  & $\max \{Z_2\mid \hat p_2<0.1\}$ & $\max Z_3$  & $\max \{Z_3\mid \hat p_3<0.1\}$\\
\midrule
$t_{(5)}$& $26.7$ & $\phz1.91$&  $31.5$ & $\phz3.87$\\
Exp(1) & $\phz\phz7.55$ & $\phz7.55$ & $\phz\phz7.76$ & $\phz7.76$ \\
$\dustd(0,1)$ &$\phz\phz3.49$ & $\phz2.45$ & $\phz\phz5.87$ & $\phz2.61$\\
$\dnorm(0,1)$ & $\phz\phz3.07$ & $\phz2.78$ & $\phz\phz3.07$ & $\phz2.78$\\
\bottomrule
\end{tabular}
\caption{\label{tab:zvals}
Maximal $Z$ scores observed for $\hat p_2$ and $\hat p_3$
in $500$ independent replications.
}
\end{table}

What we find is that the $Z$ values are not very extreme.  This suggests
that it might be feasible to get a conservative $p$-value estimate
by adding some multiple of the distribution 2 RMSE to $\hat p_2$.

\section{Data comparisons}\label{sec:data}

Three data sets on Parkinson's disease were used by \cite{lars:owen:2015}
and investigated in Chapter 6 of \cite{he:2016}.
They come from \cite{scherzer}, \cite{moran2006whole}
and \cite{zhang2}.
Table~\ref{tab:studies} shows their sample sizes.

\begin{table}
\centering
\begin{tabular}{lccc}
First author & $m_1$ &$m_0$ & $N={m_1+m_0\choose m_1}$\\
\midrule
Zhang & 11 & 18 & $3.5\times 10^7\phz$\\
Moran & 29 & 14 & $7.9\times 10^{10}$\\
Scherzer & 50 & 22 & $1.8\times 10^{18}$\\
\bottomrule
\end{tabular}
\caption{\label{tab:studies}
Sample sizes for three microarray studies.
}
\end{table}

For this comparison, there were 6180 gene sets from 
v5.1 of mSigDB's gene set collections. Curated gene sets
and Gene Ontology gene sets were used. The gene sets ranged
in size from $5$ to $2131$ genes with an average size of
$93.08$ genes. Slightly different
versions of the gene sets were used in \cite{lars:owen:2015}.

Ground truth estimates of two-sided  p values for linear test
statistics were obtained using $M=10^6$ permutations. When
the estimate was below $10^{-4}$, the number $M$ was increased
to $10^7$.
The Zhang data set had the smallest sample size and had no
gene sets significant at below $0.01$ and so we do not
compare estimates for this gene set.

Table~\ref{tab:corrs} gives correlations between
$\log_{10}$ estimated and gold-standard $p$-values
for these genes.  
From Table~\ref{tab:corrs}, we see that $\hat p_1$, $\hat p_2$
and $\hat p_3$ have nearly the same correlations with the gold
standard; indeed they correlate highly with each other. They
correlate with the gold standard estimate much more closely
than the saddlepoint estimator does. 
Figures in Chapter $6$ of \cite{he:2016} give scatterplots.  
These show the saddlepoint estimator is biased low
and $\hat p_3$ is biased slightly high. Statistics $\hat p_1$
and $\hat p_2$ are quite close, possibly because none
of the gene sets has a very small $p$-value.

\begin{table}
\centering
\begin{tabular}{lccccccc}
Data & Correlation & $\hat p_1$& $\hat p_2$& $\hat p_3$ & $\hat p_{\text{saddle}}$\\
\midrule 
Moran & Pearson & $0.9997$ & $0.9997$ & $0.9997$ & $0.9937$\\
Moran & Kendall & $0.9856$ & $0.9856$& $0.9866$ & $0.9395$\\
\midrule
Scherzer & Pearson & $0.9997$ & $0.9997$ & $0.9997$ & $0.9837$\\
Scherzer & Kendall & $0.9869$ & $0.9869$& $0.9870$ & $0.8937$\\
\midrule 
Moran Low & Pearson & $0.9501$ & $0.9504$ & $0.9653$ & $0.7125$\\
Moran Low & Kendall & $0.8283$ & $0.8283$ & $0.8578$ & $0.5411$\\
\midrule 
Scherzer Low & Pearson & $0.9940$ & $0.9940$ & $0.9947$ & $0.8652$\\
Scherzer Low & Kendall & $0.9429$ & $0.9429$& $0.9429$ & $0.7714$\\
\bottomrule
\end{tabular}
\caption{\label{tab:corrs}
The table gives both Pearson and Kendall correlations over 6180
gene sets, between estimated $\log_{10}(p)$-values and gold standard 
$\log_{10}(p)$-values 
for both the Moran and Scherzer data sets.  
The designation `Low' refers to only the $190$ 
gene sets with $p$-values in the interval $(10^{-5},10^{-4})$
for Moran, or the $15$ gene sets 
with $p$-values in the interval $(10^{-5},10^{-3})$
for Scherzer.
}
\end{table}

While saddlepoint methods have a very desirable
relative error property they do have some numerical
issues.  Table~\ref{tab:timing} shows some timing
data. We also got infinite values for $95$ of the
gene sets  on the Zhang data. It might 
be a convergence issue or possibly a flaw in
how we implemented saddlepoints.  

\begin{table}
\begin{tabular}{lcccc}
Data Set & Saddle & $\hat p_1$ & $\hat p_2$ & $\hat p_3$ \\
\midrule
Zhang & 0.0631 & 0.0024 & 0.0031 & 0.0032\\
Moran & 0.0894 & 0.0029 & 0.0037 & 0.0038\\
Scherzer &  0.1394 & 0.0034 & 0.0045 & 0.0047\\
\bottomrule
\end{tabular}
\caption{\label{tab:timing}
Average, over 6180 gene sets of the  running time in seconds
for saddlepoint $p$-values 
and $\hat p_j$, $j=1,2,3$. Total time ranges from under $1/4$ minute
to just over $14$ minutes.
}
\end{table}

\section{Discussion}\label{sec:discussion}

We have constructed approximations to the permutation $p$-value 
using probability and geometry derived from discrepancy theory. 
A rigorous upper bound for $p$
could be attained using $L_\infty$ spherical cap discrepancies instead of 
the $L_2$ version, but computing such discrepancies is a major challenge. 
\cite{narc:sun:ward:wu:2010} give upper bounds for the $L_\infty$ spherical 
cap discrepancy, in terms of averages of a great many harmonic functions at 
the points $\bsx_i$. For our application we need bounds for 
spherical caps of a fixed volume (under distribution~\ref{mod:one}) 
and of fixed volume and constrained location (under distribution~\ref{mod:two}) 
and those go beyond what is in \cite{narc:sun:ward:wu:2010}. 

Many other approximation methods have been proposed for permutation tests. 
For instance, \cite{zhou2009efficient} fit approximations by moments 
in the Pearson family. 
\cite{lars:owen:2015} fit Gaussian and beta approximations 
to linear statistics and gamma approximations to quadratic 
statistics for gene set testing problems. 
\cite{knij:wess:rein:shmu:2009} fit generalized extreme value 
distributions to the tails of sampled permutation values. 

None of these approximations come with an all inclusive 
$p$-value that accounts for both numerical uncertainty
of the estimation and sampling uncertainty behind the
original data.  IID sampling of permutations does come with 
such a $p$-value if we add one to numerator 
and denominator as \cite{barn:1963} suggests. 
Then the Monte Carlo $p$-value estimate $\hat p$ is actually a conservative 
$p$-value in it's own right: $\Pr( \hat p \le u ) \le u$ for $0\le u\le 1$
under the null hypothesis. 
However, that method cannot attain very small $p$-values, and
so a gap remains.

We have employed reference distributions in an effort to
address this gap.  We select a set $\bby$ containing
$\bsy_0$ and find the first two moments of $p(\bsy,\hat\rho)$
for $\bsy\sim\dustd(\bby)$.
If the data $\bsy_0$ were actually sampled from our reference
distribution, then we could get an all inclusive conservative
$p$-value via the Chebychev inequality.

To illustrate the Chebychev inequality,
let $\mu=\e(p(\bsy,\hat\rho))$ and $\sigma^2 = \var(p(\bsy,\hat\rho))$
for the observed value $\hat\rho = \bsx_0^\tran\bsy_0$ and for random 
$\bsy\sim\dustd(\bby)$ for some reference set $\bby$.
Then  $\Pr( p\ge\mu+\lambda\sigma) \le 1/(1+\lambda^2)$ for any $\lambda>0$. 
Under this model, $p^*=\mu +\lambda\sigma+1/(1+\lambda^2)$ is a conservative $p$-value. 
Minimizing $p^*$ over $\lambda$ reduces to solving $2\lambda=\sigma(1+\lambda^2)^2$. 
For small $p$ we anticipate $\lambda\gg1$ and hence $\lambda'=(2/\sigma)^{1/3}$ will be 
almost as good as the optimal $\lambda$ we could find numerically. That choice leads to 
$p^*\le \mu +(2^{1/3}+2^{-2/3})\sigma^{2/3}$.   

For a numerical illustration, consider $\mu = 10^{-30}$
and $\sigma = 3\times 10^{-30}$, roughly describing the small $p$-value estimates 
from the case $m_0=m_1=70$. Then $p^*\le 4\times10^{-20}$, much larger than $\mu$
and yet still very small, likely small enough to be significant
after multiplicity adjustments.

The reference distributions describe a set in which $p(\bsy_0,\hat\rho)$ is known to lie. 
Reference distribution $1$ applies for Gaussian $Y_i$ and
reference distribution $2$ is for Gaussian $Y_i$ after conditioning
on $\bsy^\tran\bsx_0=\bsy_0^\tran\bsx_0$.
Of course, the data will not ordinarily be exactly Gaussian.  The numerical illustration
uses a Chebychev inequality at $\lambda'\doteq8.7\times 10^9$ standard deviations.

As mentioned above, an 
$L_\infty$ version of the Stolarsky inequality would eliminate the
need for Chebychev inequalities though it might also
be very conservative. We do not  know whether
$p(\bsy,\hat\rho)$ has a heavy tailed distribution under
reference distribution $2$.

Our permutation points fall into a lattice subset of $\real^d$
intersected with the unit sphere $\sd$.
Our problem of counting the number of such points in a
subset is one that is addressed under the term
`Geometry of numbers'.  According to a personal
communication from Neil Sloane, the standard approach
to such problems is via the volume ratio, which in
our setting is $\hat p_1$ which does not do well
for our problems.

To have reasonable power to obtain a $p$-value below $\epsilon$ by permutation sampling requires on the order of $1/\epsilon$ permutations each requiring
$O(n\log(n))$ computation to generate and $O(n)$ computation to evaluate
the inner product.
The cost to compute the standard errors
in our method is dominated by a cost proportional to $\minm^3$
though there is a very small cost proportional to $\minm^4$. In the
range where the first cost dominates, our proposal is advantageous
when $\minm^3 = o(n/\epsilon)$.  Supposing that $m_0$ and $m_1$
are comparable, our advantage holds when $\minm^2 = o(1/\epsilon)$.
If only the estimate and not the standard error is required, then our
$\hat p_2$ and $\hat p_3$ cost $O(\minm)$ once the $\hat\rho$
(cost $O(n)$) has been computed. Then the total cost is $O(n)$
compared to the much larger cost $O(n/\epsilon)$ for sampling.

\section*{Acknowledgements}
This work was supported by the US National Science Foundation under grants
DMS-1407397 and DMS-1521145.
We thank John Robinson and Neil Sloane for helpful comments.

\bibliographystyle{apalike}
\bibliography{stolarsky}

\begin{thebibliography}{}

\bibitem[Ackermann and Strimmer, 2009]{acke:stri:2009}
Ackermann, M. and Strimmer, K. (2009).
\newblock A general modular framework for gene set enrichment analysis.
\newblock {\em {BMC} Bioinformatics}, 10:1--20.

\bibitem[Aronszajn, 1950]{aron:1950}
Aronszajn, N. (1950).
\newblock Theory of reproducing kernels.
\newblock {\em Transactions of the American Mathematical society},
  68(3):337--404.

\bibitem[Barnard, 1963]{barn:1963}
Barnard, G.~A. (1963).
\newblock Discussion of the spectral analysis of point processes (by m. s.
  bartlett).
\newblock {\em Journal of the Royal Statistical Society, series B}, 25:294.

\bibitem[Bilyk et~al., 2016]{bilyk2016stolarsky}
Bilyk, D., Dai, F., and Matzke, R. (2016).
\newblock Stolarsky principle and energy optimization on the sphere.
\newblock Technical report.

\bibitem[Blanchet and Glynn, 2008]{blanchet2008efficient}
Blanchet, J. and Glynn, P. (2008).
\newblock Efficient rare-event simulation for the maximum of heavy-tailed
  random walks.
\newblock {\em The Annals of Applied Probability}, pages 1351--1378.

\bibitem[Brauchart and Dick, 2013]{brau:dick:2013}
Brauchart, J. and Dick, J. (2013).
\newblock A simple proof of {Stolarsky's} invariance principle.
\newblock {\em Proceedings of the American Mathematical Society},
  141(6):2085--2096.

\bibitem[Fadista et~al., 2016]{fadi:etal:2016}
Fadista, J., Manning, A.~K., Florez, J.~C., and Groop, L. (2016).
\newblock The (in) famous {GWAS} p-value threshold revisited and updated for
  low-frequency variants.
\newblock {\em European Journal of Human Genetics}, 24(8):1202--1205.

\bibitem[He, 2016]{he:2016}
He, H. (2016).
\newblock {\em Efficient Permutation-Based P-Value Estimation for Gene Set
  Tests}.
\newblock PhD thesis, Stanford University.

\bibitem[Jiang and Gentleman, 2007]{jian:gent:2007}
Jiang, Z. and Gentleman, R. (2007).
\newblock Extensions to gene set enrichment.
\newblock {\em Bioinformatics}, 23(3):306--313.

\bibitem[Knijnenburg et~al., 2009]{knij:wess:rein:shmu:2009}
Knijnenburg, T.~A., Wessels, L. F.~A., Reinders, M. J.~T., and Shmulevich, I.
  (2009).
\newblock Fewer permutations, more accurate p-values.
\newblock {\em Bioinformatics}, 25(12):i161--i168.

\bibitem[Larson and Owen, 2015]{lars:owen:2015}
Larson, J.~L. and Owen, A.~B. (2015).
\newblock Moment based gene set tests.
\newblock {\em BMC Bioinformatics}, 16(1):132.

\bibitem[Lee and Kim, 2014]{lee:kim:2014}
Lee, Y. and Kim, W.~C. (2014).
\newblock Concise formulas for the surface area of the intersection of two
  hyperspherical caps.
\newblock Technical report, Korea advanced institute of science and technology.

\bibitem[Moran et~al., 2006]{moran2006whole}
Moran, L.~B., Duke, D.~C., Deprez, M., Dexter, D.~T., Pearce, R. K.~B., and
  Graeber, M.~B. (2006).
\newblock Whole genome expression profiling of the medial and lateral
  substantia nigra in parkinsons disease.
\newblock {\em Neurogenetics}, 7(1):1--11.

\bibitem[Narcowich et~al., 2010]{narc:sun:ward:wu:2010}
Narcowich, F.~J., Sun, X., Ward, J.~D., and Wu, Z. (2010).
\newblock Leveque type inequalities and discrepancy estimates for minimal
  energy configurations on spheres.
\newblock {\em Journal of Approximation Theory}, 162(6):1256--1278.

\bibitem[Niederreiter, 1992]{nied:1992}
Niederreiter, H. (1992).
\newblock {\em Random Number Generation and Quasi-{Monte Carlo} Methods}.
\newblock SIAM, Philadelphia, PA.

\bibitem[Reid, 1988]{reid:1988}
Reid, N. (1988).
\newblock Saddlepoint methods and statistical inference.
\newblock {\em Statistical Science}, pages 213--227.

\bibitem[Robinson, 1982]{robi:1982}
Robinson, J. (1982).
\newblock Saddlepoint approximations for permutation tests and confidence
  intervals.
\newblock {\em Journal of the Royal statistical society,Series B}, pages
  91--101.

\bibitem[Scherzer et~al., 2007]{scherzer}
Scherzer, C.~R., AC, A. C.~E., Morse, L.~J., Liao, Z., Locascio, J.~J., Fefer,
  D., Schwarzschild, M.~A., Schlossmacher, M.~G., Hauser, M.~A., Vance, J.~M.,
  Sudarsky, L.~R., Standaert, D.~G., Growdon, J.~H., Jensen, R.~V., and
  Gullans, S.~R. (2007).
\newblock Molecular markers of early {Parkinson's} disease based on gene
  expression in blood.
\newblock {\em Proc Natl Acad Sci}, 104(3):955--60.

\bibitem[Stolarsky, 1973]{stol:1973}
Stolarsky, K.~B. (1973).
\newblock Sums of distances between points on a sphere. {II}.
\newblock {\em Proceedings of the American Mathematical Society},
  41(2):575--582.

\bibitem[Zhang et~al., 2005]{zhang2}
Zhang, Y., James, M., Middleton, F.~A., and Davis, R.~L. (2005).
\newblock Transcriptional analysis of multiple brain regions in {Parkinson's}
  disease supports the involvement of specific protein processing, energy
  metabolism, and signaling pathways, and suggests novel disease mechanisms.
\newblock {\em American J Med Genet B Neuropsychiatry Genet}, 137B(1):5--16.

\bibitem[Zhou et~al., 2009]{zhou2009efficient}
Zhou, C., Wang, H.~J., and Wang, Y.~M. (2009).
\newblock Efficient moments-based permutation tests.
\newblock In {\em Advances in neural information processing systems}, pages
  2277--2285.

\end{thebibliography}

\section{Appendix}\label{sec:appendix}

Here we collect up some of the longer proofs.

\subsection{Proof of Theorem~\ref{thm:model1viastolarsky} (Limiting invariance)}
\label{sec:proof:thm:model1viastolarsky}
Here we show that taking limits as $\beps$ goes
to zero in the formula of \cite{brau:dick:2013}
proves Theorem~\ref{thm:model1viastolarsky}.
We use three lemmas, one for each term in
Theorem~\ref{thm:weighted-stolarsky}.
We use $\beps\to0$ as a shorthand for $\lim_{\epsilon_1\to0^+}\lim_{\epsilon_2\to0^+}$.

\begin{lemma}\label{lem:vepslhs}
Let $v_{\beps}$ be defined as in \eqref{eq:veps}. Then for $\hat\rho\in[-1,1)$,
\begin{align*}
&\lim_{\beps\to0} \int_{-1}^1v_{\beps}(t) \int_{\sd} \biggl| \sigma_d(C(\bsz;t)) -
      \frac{1}{N}\sum\limits_{k=0}^{N-1}\one_{C(\bsz;t)}(\bsx_k)\biggr|^2
    \rd \sigma_d(\bsz) \rd t \\
    &= \int_{\sd} |p(\bsz, \hat{\rho}) -\hat{p}(\hat{\rho})|^2\rd \sigma_d(\bsz).
\end{align*}
\end{lemma}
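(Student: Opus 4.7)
The plan is to split $v_{\beps}$ into its two additive pieces and handle each of the resulting limits separately, exploiting the convention that the order $\beps\to 0$ means $\epsilon_2\to 0^+$ first and then $\epsilon_1\to 0^+$. Introduce the shorthand
\[
F(t) \;=\; \int_{\sd}\Bigl|\sigma_d(C(\bsz;t)) - \frac{1}{N}\sum_{k=0}^{N-1}\one_{C(\bsz;t)}(\bsx_k)\Bigr|^2 \rd\sigma_d(\bsz),
\]
which, after using $\one_{C(\bsz;t)}(\bsx_k)=\one_{C(\bsx_k;t)}(\bsz)=$ indicator for $\bsz\in C(\bsx_k;t)$, is exactly $\int_{\sd}|p(\bsz,t)-\hat p_1(t)|^2\rd\sigma_d(\bsz)$. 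By linearity,
\[
\int_{-1}^{1} v_{\beps}(t) F(t)\rd t \;=\; \epsilon_2\int_{-1}^{1} F(t)\rd t \;+\; \frac{1}{\epsilon_1}\int_{\hat\rho}^{\hat\rho+\epsilon_1} F(t)\rd t.
\]

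The first step is to dispose of the $\epsilon_2$ piece. The integrand in $F$ is a squared difference of two quantities in $[0,1]$ and so is bounded by $1$; hence $F(t)\le 1$ on $[-1,1]$ and $\epsilon_2\int_{-1}^1 F(t)\rd t\le 2\epsilon_2$, which vanishes as $\epsilon_2\to 0^+$, uniformly in $\epsilon_1$. So regardless of how $\epsilon_1$ is fixed when we take the inner limit, only the second piece survives.

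The second step is to take $\epsilon_1\to 0^+$ in $\epsilon_1^{-1}\int_{\hat\rho}^{\hat\rho+\epsilon_1} F(t)\rd t$. I would argue that $F$ is right-continuous at $\hat\rho$ and invoke the elementary fact that for any right-continuous bounded $F$, the one-sided Lebesgue average converges to $F(\hat\rho)$. Right-continuity is where the main (and only) technical care is needed: for each fixed $\bsz$ the indicator $\one(\bsz^\tran\bsx_k\ge t)$ is right-continuous in $t$ because the inequality is nonstrict, so $(1/N)\sum_k\one_{C(\bsz;t)}(\bsx_k)$ is a right-continuous step function. Combined with continuity of $\sigma_d(C(\bsz;t))$ in $t$, the integrand defining $F(t)$ is right-continuous in $t$ and dominated by $1$; dominated convergence then yields right-continuity of $F$.

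Putting the two pieces together gives
\[
\lim_{\epsilon_1\to 0^+}\lim_{\epsilon_2\to 0^+}\int_{-1}^{1} v_{\beps}(t) F(t)\rd t \;=\; F(\hat\rho) \;=\; \int_{\sd}|p(\bsz,\hat\rho)-\hat p_1(\hat\rho)|^2\rd\sigma_d(\bsz),
\]
which is the claimed identity. The ordering of the limits is essential: taking $\epsilon_2\to 0$ first makes the constant $\epsilon_2$-term harmless for any $\epsilon_1>0$, and the right (as opposed to two-sided) averaging window on $[\hat\rho,\hat\rho+\epsilon_1]$ matches the $\ge t$ convention built into $p(\bsz,t)$, so that potential jumps of $p(\bsz,\cdot)$ at $t=\hat\rho$ (when $\bsz^\tran\bsx_k=\hat\rho$ for some $k$) pose no difficulty.
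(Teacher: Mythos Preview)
Your overall strategy matches the paper's proof exactly: split $v_{\beps}$ into its two summands, let $\epsilon_2\to 0^+$ kill the constant piece (boundedness of the integrand), then let $\epsilon_1\to 0^+$ collapse the window average to the value at $\hat\rho$. The paper simply asserts the second limit; you attempt to justify it, which is commendable, but your justification contains an error.

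The claim that $t\mapsto \one(\bsz^\tran\bsx_k\ge t)$ is right-continuous is false: writing $a=\bsz^\tran\bsx_k$, this indicator equals $1$ for $t\le a$ and $0$ for $t>a$, so it is \emph{left}-continuous with a right-hand jump at $t=a$. Consequently your pointwise argument for right-continuity of the integrand, and hence of $F$, does not go through as stated. The fix is easy and stays within your dominated-convergence framework: for each fixed $k$, the set $\{\bsz\in\sd:\bsz^\tran\bsx_k=\hat\rho\}$ is a $(d-1)$-dimensional subsphere and thus has $\sigma_d$-measure zero. Hence for $\sigma_d$-almost every $\bsz$ the map $t\mapsto p(\bsz,t)$ is continuous at $t=\hat\rho$, the integrand in $F(t)$ converges to that in $F(\hat\rho)$ as $t\to\hat\rho$, and dominated convergence gives (two-sided) continuity of $F$ at $\hat\rho$. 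With this correction your proof is complete and, in fact, more detailed than the paper's.
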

\begin{proof}
Substituting $v_{\beps}$ we get
\begin{align*}
&\quad
\int_{-1}^1\Bigl(\epsilon_2 + \frac{1}{\epsilon_1} \one (\hat{\rho}\le t \le\hat{\rho} + \epsilon_1)\Bigr)
\int_{\sd} (\hat p(t)-p(\bsz,t))^2 \rd \sigma_d(\bsz) \rd t\\
\to&\quad
\frac1{\epsilon_1}\int_{\hat\rho}^{\hat\rho+\epsilon_1}
\int_{\sd} (\hat p(t)-p(\bsz,t))^2 \rd \sigma_d(\bsz) \rd t,\quad\text{as $\epsilon_2\to0^+$}\\
\to&\quad
\int_{\sd} (\hat p(\hat\rho)-p(\bsz,\hat\rho))^2 \rd \sigma_d(\bsz),\quad\text{as $\epsilon_1\to0^+$}.\qedhere
\end{align*}
\end{proof}

\begin{lemma}\label{lem:kvsum}
Let $v_{\beps}$ be as in~\eqref{eq:veps} with $\hat\rho\in[-1,1)$,
and let $K_{v_{\beps}}$ be given by~\eqref{eq:defkv}.
Then for any $\bsx,\bsx'\in\sd$,
\begin{align*}
\lim_{\beps\to 0}K_{v_{\beps}}(\bsx, \bsx') = \sigma_d(C(\bsx;\hat{\rho}) \cap C(\bsx';\hat{\rho})).
\end{align*}
\end{lemma}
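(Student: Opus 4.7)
\medskip

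\noindent\textbf{Proof proposal.} The plan is to reduce $K_{v_{\beps}}(\bsx,\bsx')$ to a single one-dimensional integral against $v_{\beps}$, then pass to the limit using elementary convergence arguments. First I would swap the order of integration in~\eqref{eq:defkv} via Tonelli (all integrands are nonnegative), and use the symmetry $\bsx\in C(\bsz;t)\iff\bsz\in C(\bsx;t)$ to rewrite the inner double integral as
\begin{align*}
g(t)\;\equiv\;\int_{\sd}\one_{C(\bsz;t)}(\bsx)\one_{C(\bsz;t)}(\bsx')\rd\sigma_d(\bsz)
=\sigma_d\bigl(C(\bsx;t)\cap C(\bsx';t)\bigr).
\end{align*}
This reduces the claim to showing
\begin{align*}
\lim_{\beps\to 0}\int_{-1}^1 v_{\beps}(t)\,g(t)\rd t \;=\; g(\hat\rho).
\end{align*}

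Next I would substitute the explicit form of $v_{\beps}$ from~\eqref{eq:veps}, splitting into two pieces:
\begin{align*}
\int_{-1}^1 v_{\beps}(t)g(t)\rd t \;=\; \epsilon_2\int_{-1}^1 g(t)\rd t \;+\; \frac{1}{\epsilon_1}\int_{\hat\rho}^{\hat\rho+\epsilon_1}g(t)\rd t.
\end{align*}
Since $0\le g(t)\le 1$ and the interval $[-1,1]$ has finite length, the first term is bounded by $2\epsilon_2$ and vanishes as $\epsilon_2\to 0^+$. For $\epsilon_1$ small enough we have $\hat\rho+\epsilon_1\le 1$ because $\hat\rho\in[-1,1)$, so the second integral is well defined.

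For the second term I would invoke continuity of $g$ at $\hat\rho$: if $t_n\to\hat\rho$, then $\one_{C(\bsx;t_n)}(\bsz)\one_{C(\bsx';t_n)}(\bsz)\to\one_{C(\bsx;\hat\rho)}(\bsz)\one_{C(\bsx';\hat\rho)}(\bsz)$ for every $\bsz$ with $\langle\bsx,\bsz\rangle\ne\hat\rho$ and $\langle\bsx',\bsz\rangle\ne\hat\rho$, and the exceptional sets have $\sigma_d$-measure zero (they are lower-dimensional spherical sections). Dominated convergence with envelope $1$ then gives $g(t_n)\to g(\hat\rho)$. Continuity of $g$ at $\hat\rho$ implies $(1/\epsilon_1)\int_{\hat\rho}^{\hat\rho+\epsilon_1}g(t)\rd t \to g(\hat\rho)$ as $\epsilon_1\to 0^+$, which completes the argument.

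The only substantive step is the continuity of $g$, and even that is routine dominated convergence once one observes that the spherical caps' boundaries have measure zero. The rest is Tonelli plus the two elementary limits. I would expect the total write-up to be only a few lines longer than the sketch above.
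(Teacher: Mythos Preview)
Your proposal is correct and follows essentially the same route as the paper, which simply notes that the argument is identical to that of the preceding lemma: substitute $v_{\beps}$, let $\epsilon_2\to0^+$ to kill the bounded constant term, then let $\epsilon_1\to0^+$ to collapse the average onto $t=\hat\rho$. You have supplied more detail than the paper does---in particular the dominated-convergence justification for continuity of $g(t)=\sigma_d(C(\bsx;t)\cap C(\bsx';t))$ at $\hat\rho$---but the structure is the same.
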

\begin{proof}
The argument is essentially the same as for Lemma \ref{lem:vepslhs}.
\end{proof}

\begin{lemma}\label{lem:kvint}
Let $v_{\beps}$ be as in~\eqref{eq:veps} with $\hat\rho\in[-1,1)$,
and let $K_{v_{\beps}}$ be given by~\eqref{eq:defkv}.
Then
\begin{align}\label{eq:kvint}
\lim_{\beps\to0}
\int_{\sd}  \int_{\sd}K_{v_{\beps}}(\bsx, \bsy)\rd \sigma_d(\bsx)\rd \sigma_d(\bsy) = \hat{p}_1(\hat{\rho})^2.
\end{align}
\end{lemma}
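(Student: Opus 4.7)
The plan is to swap the order of integration in the triple integral so that the dependence on $\bsx,\bsy$ collapses into a single volume factor, after which the limit reduces to the same one-dimensional limit used in Lemmas \ref{lem:vepslhs} and \ref{lem:kvsum}. First I would substitute the definition \eqref{eq:defkv} of $K_{v_\beps}$ into the left-hand side of \eqref{eq:kvint} and apply Fubini (the integrand is nonnegative, so this is unconditional) to obtain
\begin{align*}
\int_{\sd}\!\!\int_{\sd}\! K_{v_\beps}(\bsx,\bsy)\rd \sigma_d(\bsx)\rd \sigma_d(\bsy)
= \int_{-1}^1 \! v_\beps(t) \!\int_{\sd} \!\Bigl(\int_{\sd}\! \one_{C(\bsz;t)}(\bsx)\rd \sigma_d(\bsx)\Bigr)^{\!2}\!\rd \sigma_d(\bsz)\rd t.
\end{align*}

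Next I would observe that the inner integral equals $\sigma_d(C(\bsz;t))$, which is independent of $\bsz$ and equal to $\sigma_d(C(\cdot\,,t))=\hat p_1(t)$. Since $\sigma_d(\sd)=1$, the middle integral over $\bsz$ contributes no further factor, and the expression collapses to
\begin{align*}
\int_{-1}^1 v_\beps(t) \, \hat p_1(t)^2 \rd t.
\end{align*}

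Finally I would take the iterated limit $\beps\to 0$ exactly as in the proof of Lemma \ref{lem:vepslhs}: sending $\epsilon_2\to 0^+$ kills the constant term of $v_\beps$, leaving $\epsilon_1^{-1}\int_{\hat\rho}^{\hat\rho+\epsilon_1}\hat p_1(t)^2\rd t$; then sending $\epsilon_1\to 0^+$ and invoking continuity of $t\mapsto \hat p_1(t)$ at $t=\hat\rho\in[-1,1)$ (equivalently, the fundamental theorem of calculus applied to an integrable, in fact bounded, integrand) yields $\hat p_1(\hat\rho)^2$, which is the claim.

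There is no real obstacle here beyond bookkeeping: the integrand is bounded by $1$, so dominated convergence handles both limits, and Fubini applies because everything is nonnegative. The only point worth double-checking is that $\hat p_1$ is continuous at $\hat\rho$, which is immediate from the closed-form expression for $\sigma_d(C(\cdot\,,t))$ in terms of the incomplete beta function given in Section \ref{sec:background}.
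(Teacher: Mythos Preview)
Your proof is correct. The paper's proof and yours differ only in the order of operations: the paper first brings the limit $\beps\to 0$ inside the double integral over $(\bsx,\bsy)$ via dominated convergence (using that $K_{v_\beps}$ is nonnegative and uniformly bounded), obtaining the pointwise limit $\int_{\sd}\one_{C(\bsz;\hat\rho)}(\bsx)\one_{C(\bsz;\hat\rho)}(\bsy)\rd\sigma_d(\bsz)$, and only then applies Fubini to reduce to $\hat p_1(\hat\rho)^2$. You instead apply Fubini at the outset, collapsing the whole expression to the one-dimensional integral $\int_{-1}^1 v_\beps(t)\hat p_1(t)^2\rd t$ before taking the limit. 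Your ordering is arguably a bit cleaner, since the final limit step then coincides verbatim with the argument already carried out in Lemma~\ref{lem:vepslhs}, whereas the paper's ordering requires a separate (if routine) dominated-convergence justification; conceptually, though, the two arguments are the same.
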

\begin{proof}
For any $\bsx, \bsy\in \sd$,
the kernel $K_{v_{\beps}}(\bsx, \bsy)$ is nonnegative and upper bounded by a constant.
Therefore we can take our limit operations inside the double integral over $\bsx$ and $\bsy$.
Now
$\lim_{\beps\to0}K_{v_{\beps}}(\bsx, \bsy)
 = \int_{\sd} \one_{C(\bsz;\hat{\rho})}(\bsx)\one_{{C(\bsz;\hat{\rho})}}(\bsy)\rd \sigma_d(\bsz)$.
Therefore the limit in~\eqref{eq:kvint} is
\begin{align*}
\int_{\sd}\int_{\sd}\int_{\sd}
 \one_{C(\bsz;\hat{\rho})}(\bsx)\one_{{C(\bsz;\hat{\rho})}}(\bsy)\rd \sigma_d(\bsz)\rd\sigma_d(\bsy)\rd\sigma_d(\bsx)
& = \hat{p}_1(\hat{\rho})^2
\end{align*}
after changing the order of the integrals.
\end{proof}

\par\noindent{\bf Theorem~\ref{thm:model1viastolarsky}}
{\sl
Let $\bsx_0,\bsx_1,\dots,\bsx_N\in\sd$ and $\hat\rho\in[-1,1]$. Then
\begin{align*}
 \int_{\sd} |p(\bsz, \hat{\rho}) -\hat{p}_1(\hat{\rho})|^2\rd \sigma_d(\bsz) = \frac{1}{N^2} \sum_{k=0}^{N-1} \sum_{l=0}^{N-1} \sigma_d(C(\bsx_k;\hat{\rho}) \cap C(\bsx_\ell;\hat{\rho})) - \hat{p}_1(\hat{\rho})^2.
\end{align*}
}
\begin{proof}
Theorem \ref{thm:weighted-stolarsky}  gives us an identity
and applying Lemmas~\ref{lem:vepslhs},  \ref{lem:kvsum}  and \ref{lem:kvint}
to both sides of it establishes~\eqref{eq:phat1viastolarsky} for $\rho\in[-1,1)$.
For $\hat\rho=1$ we get the answer by replacing $v_{\beps}$
by $\epsilon_2 + (1/\epsilon_1)\one_{1-\epsilon_1\le t\le 1}$ in the lemmas.
Replacing $\bsz$ by $\bsy$ and $\hat\rho$ by $t$ above gives the version in the main body of the article.
\end{proof}

\subsection{Proof of Lemma~\ref{lem:P2} (Double inclusion for Model~\ref{mod:two})}\label{sec:proof:lem:P2}

\begin{proof}
We split the proof into four cases and prove them individually.
Recall that $P_2(u_1,u_2,u_3,\tilde\rho,\hat\rho)
= \int_{\sdmo}
\one(\langle\bsy,\bsx_1\rangle\ge\hat\rho)
\one(\langle\bsy,\bsx_2\rangle\ge\hat\rho)\rd\sigma_{d-1}(\bsy^*)$
where $\bsy=\tilde\rho\bsx_c +\sqrt{1-\tilde\rho^2}\bsy^*$.

 \textbf{Case 1}. $\bsx_1 = \bsx_2 = \bsx_c$, i.e., $r_1 = r_2 = r_3 = 0$.
 \begin{equation*}
P_2(1, 1, 1,\tilde{\rho},\hat{\rho}) = \int_{\sdmo} \one(\left<\bsy, \bsx_c \right>
\ge \hat{\rho})\one(\left<\bsy, \bsx_c \right>
\ge \hat{\rho})\rd \sigma_{d-1}(\bsy^{*}) = \one(\tilde{\rho}
\ge \hat{\rho}).
 \end{equation*}

\textbf{Case 2}. $\bsx_1 = \bsx_c\ne \bsx_2$, i.e., $r_1 = 0, r_2 > 0, r_3 > 0$.
\begin{equation*}
\begin{aligned}
P_2(1, u_2, u_2,\tilde{\rho},\hat{\rho}) &= \int_{\sdmo} \one(\left<\bsy, \bsx_c \right>
\ge \hat{\rho})\one(\left<\bsy, \bsx_2 \right>
\ge \hat{\rho})\rd \sigma_{d-1}(\bsy^{*}) \\
&= \one(\tilde{\rho} \ge \hat{\rho}) \int_{\sdmo} \one(\left<\bsy, \bsx_2 \right>
\ge \hat{\rho})\rd \sigma_{d-1}(\bsy^{*}) \\
&= \one(\tilde{\rho}\ge \hat{\rho}) P_1(u_2, \tilde{\rho}, \hat{\rho})
\end{aligned}
\end{equation*}
where the last step uses Lemma \ref{lem:P1}.

\textbf{Case 3}. $\bsx_1 = \bsx_2 \neq \bsx_c$, i.e., $r_1 = r_2 > 0=r_3$.
\begin{equation*}
\begin{aligned}
P_2(u_2, u_2, 1,\tilde{\rho},\hat{\rho}) &= \int_{\sdmo} \one(\left<\bsy, \bsx_1 \right>
\ge \hat{\rho})\one(\left<\bsy, \bsx_2 \right>
\ge \hat{\rho})\rd \sigma_{d-1}(\bsy^{*}) \\
&= \int_{\sdmo} \one(\left<\bsy, \bsx_2 \right>
\ge \hat{\rho})\rd \sigma_{d-1}(\bsy^{*}) \\
&= P_1(u_2, \tilde{\rho}, \hat{\rho}).
\end{aligned}
\end{equation*}

\textbf{Case 4}.  $\bsx_1 \neq \bsx_2 \neq \bsx_c\ne \bsx_1$,  i.e., $r_1, r_2, r_3 > 0$.
We split this case into subcases. First we assume $u_2 = -1$, so
\begin{equation*}
\begin{aligned}
P_2(u_1, u_2, u_3,\tilde{\rho},\hat{\rho}) &= \int_{\sdmo} \one(\left<\bsy,\bsx_1 \right>
\ge \hat{\rho})\one(\left<\bsy, -\bsx_c \right>
\ge \hat{\rho})\rd \sigma_{d-1}(\bsy^{*}) \\
&=\one(-\tilde{\rho}
\ge \hat{\rho}) \int_{\sdmo} \one(\left<\bsy, \bsx_1 \right>
\ge \hat{\rho})\rd \sigma_{d-1}(\bsy^{*}) \\
& = \one(-\tilde{\rho}
\ge \hat{\rho}) P_1(u_1, \tilde{\rho}, \hat{\rho}).
\end{aligned}
\end{equation*}
Similarly if $u_1 = -1$, then
\[
P_2(u_1, u_2, u_3,\tilde{\rho},\hat{\rho}) = \one(-\tilde{\rho}
\ge \hat{\rho}) P_1(u_2, \tilde{\rho}, \hat{\rho}).
\]

Finally we assume $u_1 > -1$ and $u_2 > -1$, so now  $|u_1| < 1$ and $|u_2| < 1$.
Recall the projections $\bsx_j=u_j\bsc_c +\sqrt{1-u_j^2}\bsx_j^*$ for $j=1,2$
and introduce further projections of $\bsy^*$ and $\bsx_2^*$ onto $\bsx_1^*$:
$\bsy^{*} = t\bsx_1^{*} + \sqrt{1-t^2}\bsy^{**}$ and 
$\bsx_2^{*}  = u_3^{*}\bsx_1^{*} + \sqrt{1-u_3^{*}}\bsx_2^{**}$.
The residuals $\bsy^{**}$ and $\bsx_2^{**}$ belong to a subset of $\sd$ that
is isomorphic to $\sdmt$.
Now we have
\begin{align*}
&P_2(u_1, u_2, u_3,\tilde{\rho},\hat{\rho})  \\
&=\int_{\sdmo} \one(\left<\bsy, \bsx_1 \right>
\ge \hat{\rho})\one(\left<\bsy, \bsx_2 \right>
\ge \hat{\rho})\rd \sigma_{d-1}(\bsy^{*})\\
&= \int_{-1}^1 \frac{\omega_{d-2}}{\omega_{d-1}}(1-t^2)^{\frac{d-1}{2} - 1}\int_{\sdmt}
\one\Bigl(\tilde{\rho}u_1 +  \sqrt{1-\tilde{\rho}^2}\sqrt{1-u_1^2}t
\ge \hat{\rho}\Bigr) \\
&\qquad \times \one\Bigl(\tilde{\rho}u_2 +
\sqrt{1-\tilde{\rho}^2}\sqrt{1-u_2^2}(tu_3^{*} +
\sqrt{1-t^2}\sqrt{1-u_3^{*2}}\left<\bsy^{**}, \bsx_2^{**} \right>)
\ge \hat{\rho}\Bigr) \\
&\qquad \times \rd \sigma_{d-1}(\bsy^{**}) \rd t\\
&=
\int_{-1}^1 \frac{\omega_{d-2}}{\omega_{d-1}}(1-t^2)^{\frac{d-1}{2} - 1}
\one\Bigl(  t \ge \frac{\hat{\rho} -
  \tilde{\rho}u_1}{\sqrt{1-\tilde{\rho}^2}\sqrt{1-u_1^2}}\Bigr) \\
&\qquad \times \int_{\sdmt}\one\Bigl(\tilde{\rho}u_2 +
\sqrt{1-\tilde{\rho}^2}\sqrt{1-u_2^2}(tu_3^{*} +
\sqrt{1-t^2}\sqrt{1-u_3^{*2}}\left<\bsy^{**}, \bsx_2^{**} \right>)
\ge \hat{\rho}\Bigr)\\
&\qquad\times
\rd \sigma_{d-1}(\bsy^{**}) \rd t\\
& =
\begin{cases}
\one(\tilde{\rho}u_1 \ge \hat{\rho})\one(\tilde{\rho}u_2 \ge
\hat{\rho}), &\tilde{\rho} = \pm 1\\
\int_{-1}^1 \frac{\omega_{d-2}}{\omega_{d-1}}(1-t^2)^{\frac{d-1}{2} - 1} \one(t \ge \rho_1)  \one(tu_3^{*} \ge \rho_2) \rd t,
& \tilde{\rho} \neq \pm 1, u_3^{*} = \pm 1\\
\int_{-1}^1 \frac{\omega_{d-2}}{\omega_{d-1}}(1-t^2)^{\frac{d-1}{2} - 1} \one(t \ge \rho_1)
\sigma_{d-2}(C(\bsx_2^{**}, \frac{\rho_2 -
  tu_3^{*}}{\sqrt{1-t^2}\sqrt{1-u_3^{*2}}})) \rd t, & \tilde{\rho}
\neq \pm 1, |u_3^{*}| < 1
\end{cases}
\end{align*}
where $u_3^*, \rho_1, \rho_2$ are defined in \eqref{eq:rho12}. Hence, the result follows.
\end{proof}

\subsection{Proof of Theorem~\ref{thm:phat2} (Second moment under 
reference distribution~\ref{mod:two})}
\label{sec:proof:thm:phat2}

\begin{proof}
Without loss of generality we
relabel the values $\bsx_k$ so that $c=0$.  Any other choice for $c$
is reflected in the number $\wt\rho$.
The second moment is
\begin{align}\label{eq:secmomentsum}
\e(p(\bsy, \hat{\rho})^2 )
&=\frac{1}{N^2}
\sum\limits_{k= 0}^{N-1}
\sum\limits_{l=0}^{N-1}P_2(u_k, u_\ell, u_{k,\ell}, \tilde{\rho},
\hat{\rho})
\end{align}
where $u_{k,\ell}$ is obtained via~\eqref{eq:ur} from the swap distance $r_{k,\ell}$ 
between points $\bsx_k$ and $\bsx_\ell$.
We will partition the sum in~\eqref{eq:secmomentsum} into the same four cases as in the proof of Lemma \ref{lem:P2}.

\textbf{Case 1}, $\bsx_k = \bsx_\ell = \bsx_c$, i.e., $r_k = r_\ell = r_{k,\ell}= 0$.
There is only one pair of $(\bsx_k,\bsx_\ell)$ for this condition.
Hence, we get only one term corresponding to $P_2(1,1,1,\tilde{\rho},\hat{\rho}) = \one(\tilde{\rho} \ge \hat{\rho})$.

\textbf{Case 2}, $\bsx_k = \bsx_c\ne \bsx_\ell$, i.e., $r_k = 0, r_\ell = r_{k,\ell} > 0$.
Consider all pairs of $(\bsx_k,\bsx_\ell)$ that satisfy this condition and let $K_2$ denote their
total contribution to~\eqref{eq:secmomentsum}. Then
\begin{equation*} 
  \begin{aligned}
K_{2} &= 2\sum\limits_{l = 1}^{N-1} \int_{\sdmo}\one(\left<\bsy, \bsx_c \right>
\ge \hat{\rho})\one(\left<\bsy, \bsx_\ell \right>
\ge \hat{\rho}) \rd \sigma_{d-1}(\bsy^{*})
\\ &
= 2\sum\limits_{r= 1}^{\minm} {m_0 \choose r}{m_1  \choose r}P_2(1, u(r), u(r),\tilde{\rho}, \hat{\rho}).
  \end{aligned}
\end{equation*}

\textbf{Case 3}, $\bsx_k = \bsx_\ell \neq \bsx_c$, i.e., $r_k = r_\ell > 0 = r_{k,\ell}$.
The contribution from terms of this form is
\begin{equation*} 
  \begin{aligned}
K_{3}
&= \sum\limits_{k= 1}^{N-1} \int_{\sdmo}\one(\left<\bsy, \bsx_k \right>
\ge \hat{\rho}) \rd \sigma_{d-1}(\bsy^{*})
= \sum\limits_{r = 1}^{\minm} {m_0 \choose r}{m_1  \choose r}
P_1(u(r),\tilde{\rho}, \hat{\rho}).
  \end{aligned}
\end{equation*}

\textbf{Case 4,}  $\bsx_k \neq \bsx_\ell \neq \bsx_c$,  i.e., $r_k, r_\ell, r_{k,\ell}> 0$.
The contribution of these cases to the sum is
\begin{equation*} 
  \begin{aligned}
K_4 = &\sum_{k=1}^{N-1}\sum_{\ell=1}^{N-1}\one(\ell\ne k)
 \int_{\sdmo}\one(\left<\bsy, \bsx_i \right>
\ge \hat{\rho})\one(\left<\bsy, \bsx_j \right>
\ge \hat{\rho}) \rd \sigma_{d-1}(\bsy^{*}) \\
&=\sum_{r_k\in R}\sum_{r_\ell\in R}\sum_{r_{k,\ell}\in R_3(\bsr)}c(r_k,r_\ell,r_{k,\ell}) P_2(u_1,u_2,u_3,\tilde\rho,\hat\rho).
\end{aligned}
\end{equation*}
Then the second moment is $(\one(\tilde\rho\ge\hat\rho)+K_2+K_3+K_4)/N^2$.
\end{proof}

\subsection{Proof of Theorem~\ref{thm:location-weighted-stolarsky}
(Location weighted invariance)
}
\label{sec:proof:thm:location-weighted-stolarsky}
\begin{proof}
We follow the  technique in \cite{brau:dick:2013}. We begin by showing that $\Kv$ as defined in \eqref{eq:generalized-kernel} is a reproducing kernel.
First,  $\Kv$ is symmetric: $\Kv(\bsx, \bsy) = \Kv(\bsy, \bsx)$.
Next, choose $a_0, \dots, a_{N-1} \in \real$ and $\bsx_0, \dots, \bsx_{N-1} \in \sd$.
Then $\sum\limits_{k, \ell = 0}^{N-1} a_ka_\ell\Kv(\bsx_k, \bsx_\ell)$ equals
\begin{align*}
&\int_{-1}^1\int_{\sd} \sum\limits_{k, \ell = 0}^{N-1}
a_ka_\ell v(t)h(\left<\bsz, \bsx' \right>
)\one_{C(\bsz;t)}(\bsx_k)\one_{C(\bsz;t)}(\bsx_\ell)\rd \sigma_d(\bsz)\rd t\\
=&\int_{-1}^1
\int_{\sd}v(t) h(\left<\bsz,
\bsx' \right>) \biggl|\sum\limits_{k=0}^{N-1}a_k\one_{C(\bsz,t)}(\bsx_k)\biggr|^2 \rd \sigma_d(\bsz)\rd t
\end{align*}
which is nonnegative.
Thus $\Kv$ is symmetric and positive definite, and so by \cite{aron:1950},
$\Kv$ is a reproducing kernel.

\cite{aron:1950} also shows that a reproducing
kernel uniquely defines a Hilbert space of functions with a specific
inner product. Let $\mathcal{H}_{v, h, \bsx'} = \mathcal{H}(K_{v, h, \bsx'}, \sd)$ denote
the corresponding reproducing kernel Hilbert space of functions $f:
\sd \to \real$ with reproducing kernel $\Kv$.

We now consider functions $f_1, f_2: \sd \to\real$ which
admit the representation
\begin{equation}  \label{eq:hilbert-function}
f_i(\bsx) =
\int_{-1}^1\int_{\sd}g_i(\bsz;t)\one_{C(\bsz;t)}(\bsx)\rd \sigma_d(\bsz)\rd t,
\quad i =
1, 2
\end{equation}
for functions $g_i\in L_2(\sd\times[-1,1])$.
For any fixed $\bsy \in \sd$ the function $\Kv(\cdot, \bsy)$ has
representation~\eqref{eq:hilbert-function} via
$g(\bsz;t) = v(t)h(\left<\bsz, \bsx'\right>)\one_{C(\bsz, t)}(\bsy)$.

For functions with representation~\eqref{eq:hilbert-function}, we define an inner product by
\begin{equation}\label{eq:inner-product}
\left<f_1,f_2 \right>_{\Kv} =
\int_{-1}^1\frac{1}{v(t)}\int_{\sd}\frac{1}{h(\left<\bsz, \bsx' \right>
)}g_1(\bsz, t)g_2(\bsz,t)\rd \sigma_d(\bsz)\rd t.
\end{equation}
For $\bsy \in \sd$ and $f_1\in\mathcal{H}_{v,h,\bsx'}$,
\begin{align*}
\left<f_1, \Kv(\cdot, \bsy) \right>_{\Kv} &=
\int_{-1}^1 \frac1{v(t)}\int_{\sd} \frac{{g_1(\bsz; t)v(t)}}{h(\left<\bsz, \bsx' \right>)}
h(\left<\bsz, \bsx' \right>
)\one_{C(\bsz;t)}(\bsy)\rd \sigma_d(\bsz)\rd t \\
&= \int_{-1}^1\int_{\sd}g_1(\bsz, t)\one_{C(\bsz,t)}(\bsy)\rd \sigma_d(\bsz)\rd t\\
& = f_1(\bsy),
\end{align*}
showing that the inner product~\eqref{eq:inner-product} has the reproducing property.
By \cite{aron:1950}, the inner product in $\Hv$ is unique.
Functions $f_i$ satisfying \eqref{eq:hilbert-function}
with $\left<f_i, f_i\right>_{\Kv} < \infty$ are in $\Hv$, and
\eqref{eq:inner-product} is the unique inner product of $\Hv$.

We prove the theorem by equating two different forms of $\|\mathcal{R}(\Hv; \cdot\,)\|_{\Kv}$ where
\[
\mathcal{R}(\Hv; \cdot\,) = \int_{\sd}\Kv(\cdot\,, \bsy)\rd \sigma_d(\bsy) -
\frac{1}{N}\sum\limits_{k = 0}^{N-1}\Kv(\cdot\,, \bsx_k). 
\]
Although $\mathcal{R}(\Hv;\cdot\,)$ depends on our specific points
$\bsx_i$ we omit that from the notation.
The reproducing property of $\Kv$ yields
\begin{align*}
\left<\Kv(\cdot\,, \bsx_k), \Kv(\cdot\,, \bsx_\ell) \right>_{\Kv} &= \Kv(\bsx_k, \bsx_\ell)
\end{align*}
from which it follows that
\begin{equation}  \label{eq:kernel-set}
\begin{aligned}
&\left< \int_{\sd}\Kv(\cdot\,, \bsy)\rd \sigma_d(\bsy),
            \int_{\sd}\Kv(\cdot\,, \bsy')\rd \sigma_d(\bsy')\right>_{\Kv}\\
&=\int_{\sd}\int_{\sd}\Kv(\bsy, \bsy')\rd \sigma_d(\bsy)\rd \sigma_d(\bsy').
\end{aligned}
\end{equation}
Using  \eqref{eq:kernel-set} and the linearity of the inner product, we have
\begin{equation}  \label{eq:representer1}
  \begin{aligned}
&\left<\mathcal{R}(\Hv; \cdot\,), \mathcal{R}(\Hv; \cdot\,)\right>_{\Kv}\\
&= \int_{\sd}\int_{\sd}\Kv(\bsy,\bsy')\rd \sigma_d(\bsy)\rd \sigma_d(\bsy')
- \frac{2}{N}\sum\limits_{k=0}^{N-1}\int_{\sd}\Kv(\bsy, \bsx_k)\rd \sigma_d(\bsy) \\
&+ \frac{1}{N^2}\sum\limits_{k,\ell=0}^{N-1}\Kv(\bsx_k, \bsx_\ell).
\end{aligned}
\end{equation}

For our second form of $\|\mathcal{R}(\Hv; \cdot\,)\|_{\Kv}$, we write
\begin{equation*}\begin{aligned}
&  \mathcal{R}(\Hv; \cdot\,) \\
=&
  \int_{\sd}\Kv(\cdot\,,\bsy) \rd \sigma_d(\bsy) - \frac{1}{N}\sum\limits_{k = 0}^{N-1}\Kv(\cdot\,, \bsx_k)\\
=& \int_{-1}^1v(t)\int_{\sd}\one_{C(\bsz;t)}(\bsx) \hzx
\biggl[\int_{\sd}\one_{C(\bsz,t)}(\bsy) \rd \sigma_d(\bsy)\rd t -
  \frac{1}{N}\sum\limits_{k = 0}^{N-1}\one_{C(\bsz, t)}(\bsx_k) \biggr]
\rd \sigma_d(\bsz)\rd t\\
= & \int_{-1}^1v(t)\int_{\sd}\one_{C(\bsz,t)}(x)\hzx
\biggl[\sigma_d(C(\bsz, t)) - \frac{1}{N}\sum\limits_{k = 0}^{N-1}\one_{C(\bsz, t)}(\bsx_k) \biggr]\rd \sigma_d(\bsz)\rd t.
\end{aligned}
\end{equation*}
Hence using the definition of the inner product $\left<\cdot, \cdot \right>_{\Kv}$, we have
\begin{equation}  \label{eq:representer2}
\begin{aligned}
&\left<\mathcal{R}(\Hv; \bsx), \mathcal{R}(\Hv; \bsx)\right>_{\Kv}\\
=& \int_{-1}^1v(t)\int_{\sd}\hzx \biggl|\sigma_d(C(\bsx,t)) -
    \frac{1}{N}\sum\limits_{k = 0}^{N-1}\one_{C(\bsx; t)}(\bsx_k)\biggr|^{2} \rd \sigma_d(\bsx)\rd t.
\end{aligned}
\end{equation}
Combining equations \eqref{eq:representer1} and \eqref{eq:representer2},
we have the generalized location-weighted version of the Stolarsky invariance principle.
\end{proof}

\subsection{Proof of Theorem~\ref{thm:invariancemodeltwosecondmoment}
(Spatially weighed invariance) }
\label{sec:proof:thm:invariancemodeltwosecondmoment}
As in Section~\ref{sec:proof:thm:model1viastolarsky},
$\lim_{\beps\to0}$ means
$\lim_{\epsilon_1\to 0^+}\lim_{\epsilon_2\to0^+}$
and similarly $\lim_{\bseta\to0}$ denotes
$\lim_{\eta_1\to 0^+}\lim_{\eta_2\to0^+}$.
We prove  a series of lemmas first.

\begin{lemma}\label{lem:vhepslhs}
For $v_{\beps}(\cdot)$ and $h_{\bseta}(\cdot)$ defined by equations~\eqref{eq:veps}
and~\eqref{eq:heta},
\begin{align*}
&\lim_{\bseta\to0}\lim_{\beps\to0}
  \int_{-1}^1v_{\beps}(t)\int_{\sd} h_{\bseta}(\langle \bsz, \bsx_c\rangle ) \biggl|\sigma_d(C(\bsz,t)) -
    \frac{1}{N}\sum\limits_{k = 0}^{N-1}\one_{C(\bsz; t)}(\bsx_k)\biggr|^{2}
  \rd \sigma_d(\bsz)\rd t \\
&= \int_{\sdmo}|p(\tilde{\rho}\bsx_c + \sqrt{1 - \tilde{\rho}^2}\bsy^{*}, \hat{\rho}) -
\hat{p}_1(\hat{\rho})|^{2}\rd \sigma_{d -1}(\bsy^{*}),
\end{align*}
where $\hat p_1(\hat\rho) = \sigma_d(C(\bsy;\hat\rho))$.
\end{lemma}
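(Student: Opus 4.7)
The plan is to evaluate the two limits sequentially, taking advantage of the fact that both $v_{\beps}$ and $h_{\bseta}$ are constructed as approximate point masses. Inside, $v_{\beps}$ concentrates the $t$-integral at $t=\hat\rho$, and then $h_{\bseta}$, together with the decomposition~\eqref{eq:decomp}, concentrates the remaining $\sd$-integral on the slice $\{\bsz\in\sd : \langle\bsz,\bsx_c\rangle=\tilde\rho\}$.

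First I would take $\beps\to 0$. Since the integrand in square brackets is uniformly bounded by $1$ (being the square of a difference of probabilities), and since $h_{\bseta}(\langle\bsz,\bsx_c\rangle)$ is bounded on $\sd$ for each fixed $\bseta$, I can apply dominated convergence exactly as in Lemma~\ref{lem:vepslhs} to justify bringing the limit past the $\bsz$-integral. The same two-step argument used there (first $\epsilon_2\to0^+$ collapses the constant background, then $\epsilon_1\to0^+$ produces evaluation at $t=\hat\rho$ by Lebesgue differentiation) yields
\begin{equation*}
\lim_{\beps\to0}(\cdots) = \int_{\sd} h_{\bseta}(\langle\bsz,\bsx_c\rangle)\bigl|\hat p_1(\hat\rho) - p(\bsz,\hat\rho)\bigr|^2\rd\sigma_d(\bsz).
\end{equation*}

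Next I would take $\bseta\to0$. Write $\bsz = s\bsx_c + \sqrt{1-s^2}\bsz^*$ with $s=\langle\bsz,\bsx_c\rangle$ and $\bsz^*$ on a copy of $\sdmo$, and apply the decomposition~\eqref{eq:decomp}. The crucial point is that $h_{\bseta}$ was designed so that its singular part carries exactly the reciprocal Jacobian $\omega_d/(\omega_{d-1}(1-s^2)^{d/2-1})$. Substituting,
\begin{equation*}
\int_{\sd} h_{\bseta}(s)|\hat p_1 - p(\bsz,\hat\rho)|^2\rd\sigma_d(\bsz)
= A_{\bseta} + B_{\bseta},
\end{equation*}
where $A_{\bseta}$ is the $\eta_2$ piece, namely $\eta_2\int_{-1}^1\frac{\omega_{d-1}}{\omega_d}(1-s^2)^{d/2-1}\int_{\sdmo}|\hat p_1 - p|^2\rd\sigma_{d-1}(\bsz^*)\rd s$, which tends to $0$ as $\eta_2\to0^+$ since the inner integrand is bounded, and $B_{\bseta}$ is the $\eta_1$ piece
\begin{equation*}
\frac{1}{\eta_1}\int_{\tilde\rho}^{\tilde\rho+\eta_1}\int_{\sdmo}\bigl|\hat p_1(\hat\rho) - p(s\bsx_c+\sqrt{1-s^2}\bsz^*,\hat\rho)\bigr|^2\rd\sigma_{d-1}(\bsz^*)\rd s.
\end{equation*}
As $\eta_1\to0^+$, Lebesgue differentiation gives the claimed limit, provided one knows that the inner function of $s$ is continuous at $s=\tilde\rho$, or at least that $\tilde\rho$ is a Lebesgue point.

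The main obstacle will be justifying this last continuity/Lebesgue-point statement: $p(\cdot,\hat\rho)$ is a discontinuous function on $\sd$ (it jumps as $\bsz$ crosses a hyperplane $\langle\bsz,\bsx_k\rangle = \hat\rho$), so one must check that after averaging over $\bsz^*\in\sdmo$ the resulting function of $s$ is continuous in $s$ at $s=\tilde\rho$ for generic $\tilde\rho$. Since for fixed $\bsz^*$ each jump occurs on a set of $s$ of measure zero, and the sum over the $N$ permuted points is a finite sum of indicators, the function of $(s,\bsz^*)$ is bounded and measurable, and Fubini together with an approximate-identity argument on $s$ handles the step. Combining the two limits yields the stated identity, with $\bsz^*$ renamed $\bsy^*$.
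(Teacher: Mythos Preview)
Your proposal is correct and follows essentially the same route as the paper: take $\beps\to0$ first to localize $t$ at $\hat\rho$, then project $\bsz=s\bsx_c+\sqrt{1-s^2}\bsz^*$ via~\eqref{eq:decomp} so that the reciprocal Jacobian built into $h_{\bseta}$ cancels the $(1-s^2)^{d/2-1}$ factor, and finally take $\bseta\to0$ to localize $s$ at $\tilde\rho$. The paper's proof is terser and does not spell out the Lebesgue-point justification you raise in your final paragraph, so your added care there is fine but not strictly required at the level of detail the paper works at.
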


\begin{proof}
This proof is similar to the others.  First we take the limit $\beps\to0$ yielding
\begin{align*}
\lim_{\bseta\to 0}
\int_{\sd}h_{\bseta}(\langle \bsz, \bsx_c\rangle ) \biggl|\sigma_d(C(\bsz,\hat{\rho})) -
    \frac{1}{N}\sum\limits_{k = 1}^{N}\one_{C(\bsz; \hat{\rho})}(\bsx_k)\biggr|^{2}
  \rd \sigma_d(\bsz).
\end{align*}
Making the projection $\bsz =  s\bsx_c  + \sqrt{1-s^2}\bsz^{*}$ gives
\begin{align*}
 & \lim_{\bseta\to 0}
\int_{-1}^{1}\int_{\sdmo}\frac{\omega_{d-1}}{\omega_{d}}(1-s^2)^{d/2
  - 1} h_{\bseta}(s)\,\times\\
& \qquad\biggl|\sigma_d(C(s\bsx_c + \sqrt{1-s^2}\bsz^{*},\hat{\rho})) -
    \frac{1}{N}\sum\limits_{k = 1}^{N}\one_{C(s\bsx_c + \sqrt{1-s^2}\bsz^{*}; \hat{\rho})}(\bsx_k)\biggr|^{2}\rd
\sigma_{d-1}(\boldsymbol{z}^{*}) \rd s \\
& = \int_{\sdmo}|p(\hat{\rho}\bsx_c + \sqrt{1 - \hat{\rho}^2}\bsy^{*}, \hat{\rho}) -
\hat{p}_1(\hat{\rho})|^{2}\rd \sigma_{d -1}(\bsy^{*}). \qedhere
\end{align*}
\end{proof}

\begin{lemma}\label{lem:double-sum-limit}
For $v_{\beps}(\cdot)$ and $h_{\bseta}(\cdot)$ defined by equations~\eqref{eq:veps} and~\eqref{eq:heta} ,
\begin{align*}
& \lim_{\bseta \to  0}  \lim_{\beps \to  0}
\frac{1}{N^2}\sum\limits_{k,\ell=0}^{N-1}\Kvhepsc(\bsx_k, \bsx_\ell) \\
&= \frac{1}{N^2}\sum\limits_{k,\ell=0}^{N-1}\int_{\sdmo}\one(\left<\bsy, \bsx_k \right>
\ge \hat{\rho})\one(\left<\bsy, \bsx_\ell \right>
\ge \hat{\rho}) \rd \sigma_{d-1}(\bsy^{*}).
\end{align*}
\end{lemma}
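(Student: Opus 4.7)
The plan is to mimic the structure of the earlier limit arguments (Lemmas~\ref{lem:vepslhs}--\ref{lem:kvint}), now peeling off the limits in the order $\beps\to0$ then $\bseta\to0$, and to exploit the deliberate choice of the Jacobian factor inside $h_{\bseta}$ in~\eqref{eq:heta}. Since the outer sum has only $N^2$ terms, the limit may be exchanged with the finite sum at the end, so it suffices to compute the pointwise limit $\lim_{\bseta\to0}\lim_{\beps\to0}\Kvhepsc(\bsx_k,\bsx_\ell)$ for fixed $k,\ell$.

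First I would dispatch the inner limit $\beps\to0$. The kernel $\Kvhepsc(\bsx_k,\bsx_\ell)$ is, by~\eqref{eq:generalized-kernel}, an integral of $v_{\beps}(t)$ against a bounded function of $t$ (namely $\int_{\sd}h_{\bseta}(\langle\bsz,\bsx_c\rangle)\one_{C(\bsz;t)}(\bsx_k)\one_{C(\bsz;t)}(\bsx_\ell)\rd\sigma_d(\bsz)$, which is bounded uniformly in $t$ for fixed $\bseta$). The same delta-sequence argument used in Lemma~\ref{lem:kvsum} then yields
\begin{align*}
\lim_{\beps\to0}\Kvhepsc(\bsx_k,\bsx_\ell)
=\int_{\sd}h_{\bseta}(\langle\bsz,\bsx_c\rangle)\one_{C(\bsz;\hat\rho)}(\bsx_k)\one_{C(\bsz;\hat\rho)}(\bsx_\ell)\rd\sigma_d(\bsz).
\end{align*}

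Next I would project $\bsz$ onto $\bsx_c$ via $\bsz=s\bsx_c+\sqrt{1-s^2}\bsz^{*}$ and use~\eqref{eq:decomp} to rewrite $\rd\sigma_d(\bsz)=\frac{\omega_{d-1}}{\omega_d}(1-s^2)^{d/2-1}\rd s\,\rd\sigma_{d-1}(\bsz^{*})$. The crucial observation is that
\[
\frac{\omega_{d-1}}{\omega_d}(1-s^2)^{d/2-1}\,h_{\bseta}(s)=\eta_2\,\frac{\omega_{d-1}}{\omega_d}(1-s^2)^{d/2-1}+\frac{1}{\eta_1}\one(\tilde\rho\le s\le\tilde\rho+\eta_1),
\]
so the Jacobian cancels against the denominator built into $h_{\bseta}$. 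Letting $\eta_2\to0$ kills the first term and leaves a box-function averaging in $s$; letting $\eta_1\to0$ then collapses that average to evaluation at $s=\tilde\rho$. Writing $\bsy=\tilde\rho\bsx_c+\sqrt{1-\tilde\rho^2}\bsz^{*}$, the indicator $\one_{C(\bsz;\hat\rho)}(\bsx_k)$ becomes $\one(\langle\bsy,\bsx_k\rangle\ge\hat\rho)$, giving
\begin{align*}
\lim_{\bseta\to0}\lim_{\beps\to0}\Kvhepsc(\bsx_k,\bsx_\ell)
=\int_{\sdmo}\one(\langle\bsy,\bsx_k\rangle\ge\hat\rho)\one(\langle\bsy,\bsx_\ell\rangle\ge\hat\rho)\rd\sigma_{d-1}(\bsy^{*}).
\end{align*}
Multiplying by $1/N^2$ and summing over $k,\ell$ (the finite sum harmlessly passes through the limits) yields the claimed identity.

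The only real obstacle is justifying the two iterated limits. For the $\beps$-limit, the integrand is bounded (by the total volume $\omega_d$ times $\sup_s h_{\bseta}(s)$, which is finite for each fixed $\bseta$), so dominated convergence applies exactly as in Lemma~\ref{lem:kvsum}. For the $\bseta$-limit, the indicator-product integrand as a function of $s$ is bounded by $1$, which lets the $\eta_2$-limit pass through trivially and lets the $\eta_1$ average converge to the value at $\tilde\rho$ for almost every $\bsz^{*}$ (by Lebesgue differentiation, or directly by continuity in $s$ once one notes that the bad set of $s$ where an indicator jumps has measure zero on $\sdmo$ for generic $\tilde\rho$). Everything else is bookkeeping.
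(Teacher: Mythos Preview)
Your proposal is correct and follows essentially the same route as the paper's proof: take the $\beps$-limit first to collapse the $t$-integral at $\hat\rho$, project $\bsz$ onto $\bsx_c$ so that the Jacobian cancels the denominator built into $h_{\bseta}$, and then let $\bseta\to0$ to collapse the $s$-integral at $\tilde\rho$. You are somewhat more explicit than the paper about the Jacobian cancellation and about passing the limit through the finite sum, but the argument is the same.
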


\begin{proof}
First, $\lim_{\bseta \to  0} \lim_{\beps \to  0}
N^{-2}\sum\limits_{k,\ell=0}^{N-1}\Kvhepsc(\bsx_k, \bsx_\ell) $
equals
\begin{align*}
\frac{1}{N^2}\sum\limits_{k,\ell=0}^{N-1} \lim_{\bseta \to 0} \int_{\sd}\heps(\left<\bsz, \bsx_c \right>
)\one_{C(\bsz;\hat{\rho})}(\bsx_k)\one_{C(\bsz;\hat{\rho})}(\bsx_\ell)\rd \sigma_d(\bsz).
\end{align*}
Projecting $\bsz$ onto $\bsx_c$ yields
$\bsz = s \bsx_c + \sqrt{1 - s^2} \bsy^*$ and then we have
\begin{align*}
&  \frac{1}{N^2}\sum\limits_{k,\ell=0}^{N-1} \lim_{\bseta \to 0} \int_{-1}^1 \frac{\omega_{d-1}}{\omega_d} (1-s^2)^{d/2 - 1} \heps(s) \int_{\sdmo} \one_{C(\bsz;\hat{\rho})}(\bsx_k)\one_{C(\bsz;\hat{\rho})}(\bsx_\ell)\rd \sigma_{d-1}(\bsy^*) \\
& = \frac{1}{N^2}\sum\limits_{k,\ell=0}^{N-1}\int_{\sdmo}\one(\left<\bsy, \bsx_k \right>
\ge \hat{\rho})\one(\left<\bsy, \bsx_\ell \right>
\ge  \hat{\rho}) \rd \sigma_{d-1}(\bsy^{*}).\qedhere
\end{align*}
\end{proof}

\begin{lemma}\label{lem:double_int_limit}
For $v_{\beps}(\cdot)$ and $h_{\bseta}(\cdot)$ defined by equations~\eqref{eq:veps} and~\eqref{eq:heta},
\[
 \lim_{\bseta \to  0} \lim_{\beps \to  0}
\int_{\sd}\int_{\sd}\Kvhepsc (\bsx, \bsy)
\rd\sigma_d(\bsx) \rd \sigma_d(\bsy)
= \hat{p}_1(\hat{\rho})^{2}
\]
\end{lemma}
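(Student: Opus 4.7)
The plan is to carry out the two limits in the stated order. First I would take $\lim_{\beps\to 0}$ inside the double integral over $(\bsx,\bsy)$, then swap the resulting triple integral by Fubini--Tonelli to pull out a factor of $\hat p_1(\hat\rho)^2$, and finally compute the surviving integral of $h_{\bseta}$ explicitly and send $\bseta\to 0$.

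For the first step, I would observe that for each fixed $\bseta$ the weight $h_{\bseta}$ is uniformly bounded on its support (since on $[\tilde\rho,\tilde\rho+\eta_1]$, which we take inside $(-1,1)$, the denominator $\frac{\omega_{d-1}}{\omega_d}(1-s^2)^{d/2-1}$ is bounded away from $0$), so the same argument as for Lemma~\ref{lem:kvsum} yields the pointwise limit
\begin{equation*}
\lim_{\beps\to 0} K_{v_{\beps}, h_{\bseta}, \bsx_c}(\bsx,\bsy) = \int_{\sd} h_{\bseta}(\langle\bsz,\bsx_c\rangle)\one_{C(\bsz;\hat\rho)}(\bsx)\one_{C(\bsz;\hat\rho)}(\bsy)\rd\sigma_d(\bsz),
\end{equation*}
and the kernel is uniformly bounded in $\beps$, so dominated convergence on the finite measure space $\sd\times\sd$ justifies the interchange. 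Applying Fubini--Tonelli to swap the $\bsz$ integral with the $(\bsx,\bsy)$ integrals and using rotation invariance of $\sigma_d(C(\bsz;\hat\rho))=\hat p_1(\hat\rho)$ gives
\begin{equation*}
\lim_{\beps\to 0}\int_{\sd}\int_{\sd} K_{v_{\beps}, h_{\bseta}, \bsx_c}(\bsx,\bsy)\rd\sigma_d(\bsx)\rd\sigma_d(\bsy) = \hat p_1(\hat\rho)^2\int_{\sd} h_{\bseta}(\langle\bsz,\bsx_c\rangle)\rd\sigma_d(\bsz).
\end{equation*}

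For the remaining integral, I would project $\bsz$ onto $\bsx_c$ and use the decomposition~\eqref{eq:decomp} together with the explicit form~\eqref{eq:heta} of $h_{\bseta}$. Since $\sigma_{d-1}(\sdmo)=1$ and $\int_{-1}^1\frac{\omega_{d-1}}{\omega_d}(1-s^2)^{d/2-1}\rd s = 1$, and since the weight $\frac{\omega_{d-1}}{\omega_d}(1-s^2)^{d/2-1}$ in the second piece of $h_{\bseta}$ cancels against its reciprocal inside, I obtain
\begin{equation*}
\int_{\sd} h_{\bseta}(\langle\bsz,\bsx_c\rangle)\rd\sigma_d(\bsz) = \eta_2\cdot 1 + \frac{1}{\eta_1}\int_{\tilde\rho}^{\tilde\rho+\eta_1}\rd s = \eta_2 + 1.
\end{equation*}
Sending $\eta_2\to 0^+$ and then $\eta_1\to 0^+$ leaves $1$, which multiplied by $\hat p_1(\hat\rho)^2$ gives the desired identity. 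The only delicate point is the dominated convergence at the $\beps$-limit, which is why it is essential to carry $\bseta$ along as a fixed strictly positive parameter through the first limit and only send it to zero afterwards.
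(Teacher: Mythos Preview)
Your proposal is correct and follows essentially the same approach as the paper: interchange the $\beps$-limit with the $(\bsx,\bsy)$ integrals by dominated convergence (using that $h_{\bseta}$ is bounded for fixed $\bseta$), apply Fubini to integrate over $\bsx,\bsy$ first and factor out $\hat p_1(\hat\rho)^2$, then observe that $\int_{\sd}h_{\bseta}(\langle\bsz,\bsx_c\rangle)\rd\sigma_d(\bsz)\to 1$ as $\bseta\to 0$. Your explicit computation $\int_{\sd}h_{\bseta}=\eta_2+1$ via the projection~\eqref{eq:decomp} is a nice touch that the paper leaves implicit.
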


\begin{proof}
Because $\Kvhepsc$ is nonnegative and uniformly bounded
we may take the limit over $\beps$ inside the integrals. Now
$$
\lim_{\beps \to  0}\Kvhepsc (\bsx, \bsy)
=\int_{\sd}h_{\bseta}(\langle \bsz, \bsx_c\rangle )
\one_{C(\bsz;\hat{\rho})}(\bsx)\one_{C(\bsz;\hat{\rho})}(\bsy)\rd \sigma_d(\bsz),
$$
and the limit becomes
\begin{align*}
\lim_{\bseta \to 0}\int_{\sd}\int_{\sd}\int_{\sd}
h_{\bseta}(\langle \bsz, \bsx_c\rangle )
\one_{C(\bsz;\hat{\rho})}(\bsx)\one_{C(\bsz;\hat{\rho})}(\bsy)
\rd \sigma_d(\bsz)\rd\sigma_d(\bsx) \rd \sigma_d(\bsy).
\end{align*}
Integrating over $\bsz$ last we get
$\lim_{\bseta\to0}\int_{\sd}h_{\bseta}(\langle \bsz,\bsx_c\rangle) \hat p_1^2(\hat\rho)\rd\bsz
=\hat p_1^2(\hat\rho)$.
\end{proof}

\begin{lemma}\label{lem:sum_int_limit}
Under reference distribution \ref{mod:two}
\begin{align*}
\lim_{\bseta \to 0}\lim_{\beps \to 0}
\frac{2}{N}\sum\limits_{k=
  0}^{N-1}\int_{\sd}\Kvhepsc(\bsx, \bsx_k)\rd \sigma_d(\bsx) = 2\hat{p}_1(\hat{\rho}) \e(p(\bsy, \hat{\rho})).
  \end{align*}
\end{lemma}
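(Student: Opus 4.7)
The plan is to unpack the kernel $\Kvhepsc$ from its definition \eqref{eq:generalized-kernel}, swap the order of integration, and recognize the inner integral over $\bsx$ as a factor that pulls out of everything else. By Fubini,
\begin{align*}
\int_{\sd}\Kvhepsc(\bsx,\bsx_k)\rd\sigma_d(\bsx)
=\int_{-1}^1 v_{\beps}(t)\int_{\sd} h_{\bseta}(\langle\bsz,\bsx_c\rangle)\,\sigma_d(C(\bsz;t))\,\one_{C(\bsz;t)}(\bsx_k)\rd\sigma_d(\bsz)\rd t,
\end{align*}
because $\int_{\sd}\one_{C(\bsz;t)}(\bsx)\rd\sigma_d(\bsx)=\sigma_d(C(\bsz;t))$, which depends on $t$ alone. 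This factor of $\sigma_d(C(\cdot\,,t))$ is precisely what will become the $\hat p_1(\hat\rho)$ on the right hand side once the $t$-integral concentrates.

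Next I would take $\beps\to 0$ exactly as in Lemma~\ref{lem:vepslhs}: the weight $v_{\beps}$ concentrates at $t=\hat\rho$, and uniform boundedness of the $\bseta$-dependent integrand (for each fixed $\bseta$) lets us pass the limit through the outer finite sum and through the integral over $\bsz$. The result is
\begin{align*}
\lim_{\beps\to 0}\int_{\sd}\Kvhepsc(\bsx,\bsx_k)\rd\sigma_d(\bsx)
=\hat p_1(\hat\rho)\int_{\sd} h_{\bseta}(\langle\bsz,\bsx_c\rangle)\,\one_{C(\bsz;\hat\rho)}(\bsx_k)\rd\sigma_d(\bsz).
\end{align*}
For the $\bseta\to 0$ limit I would project $\bsz=s\bsx_c+\sqrt{1-s^2}\bsz^*$ onto $\bsx_c$ and invoke the decomposition \eqref{eq:decomp}. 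The combined factor $\tfrac{\omega_{d-1}}{\omega_d}(1-s^2)^{d/2-1}h_{\bseta}(s)$ collapses (by design of \eqref{eq:heta}) to a unit point mass at $s=\tilde\rho$ as $\eta_2\to0$ followed by $\eta_1\to0$, leaving
\begin{align*}
\int_{\sdmo}\one\bigl(\langle\tilde\rho\bsx_c+\sqrt{1-\tilde\rho^2}\bsz^*,\bsx_k\rangle\ge\hat\rho\bigr)\rd\sigma_{d-1}(\bsz^*).
\end{align*}

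Finally, summing over $k$ and applying the $2/N$ prefactor, the sum is exactly $2\,\e_2(p(\bsy,\hat\rho))$ by the single-inclusion representation in equation~\eqref{eq:moment1} of Proposition~\ref{prop:moment}, since $\bsy=\tilde\rho\bsx_c+\sqrt{1-\tilde\rho^2}\bsy^*$ with $\bsy^*$ uniform on a subset isomorphic to $\sdmo$ realizes reference distribution~\ref{mod:two}. Combined with the previously extracted factor of $\hat p_1(\hat\rho)$, this yields the stated identity. The only subtlety is justifying the limit interchanges, but this is entirely parallel to the arguments used in Lemmas~\ref{lem:vepslhs}--\ref{lem:double_int_limit}: the sum has $N$ terms (finite), the integrand is uniformly bounded once $\bseta$ is fixed, and the projection puts $h_{\bseta}$ into the exact form of an approximate identity against a bounded integrand in the remaining variables. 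The main thing to be careful about is keeping the two nested limits in the stated order so that the $\eta_2\to0$ step kills the additive $\eta_2$ piece of $h_{\bseta}$ before the $\eta_1\to0$ step turns the indicator piece into a delta at $s=\tilde\rho$.
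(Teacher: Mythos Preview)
Your proof is correct and follows essentially the same route as the paper: integrate out $\bsx$ first to extract the cap-volume factor $\sigma_d(C(\cdot\,;t))$, take $\beps\to0$ to localize at $t=\hat\rho$, then project $\bsz$ onto $\bsx_c$ and take $\bseta\to0$ to localize at $s=\tilde\rho$, finally identifying the remaining $\sdmo$ integral summed over $k$ as $\e_2(p(\bsy,\hat\rho))$. The paper's proof is terser but performs exactly the same steps in the same order.
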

\begin{proof}
The argument here is similar to the one used for Lemma~\ref{lem:double_int_limit}.
Take the limit over $\beps$ inside the integral and change the order of integration to yield
\begin{align*}
\lim_{\bseta \to 0}
2\hat{p}_1(\hat{\rho})
\int_{\sd}\frac1N\sum\limits_{k = 0}^{N -1}\heps(\left<\bsz,\bsx_c \right>)\one_{C(\bsz,
\hat{\rho})}(\bsx_k)\rd \sigma_d(\bsz).
  \end{align*}
Substituting the projection $\bsz = t\bsx_c +\sqrt{1-t^2}\bsz^*$ produces
\begin{align*}
&\lim_{\bseta \to 0} 2\hat{p}_1(\hat{\rho})\int_{-1}^1
\frac{\omega_{d-1}}{\omega_{d}}(1-t^2)^{d/2-1}\heps(t)\int_{\sdmo}
\frac{1}{N}\sum\limits_{k  = 0}^{N -1}
\one_{C(t\bsx_c + \sqrt{1-t^2}\bsz^{*},\hat{\rho})}(\bsx_k)\rd \sigma_{d-1}(\bsz^{*}) \rd t\\
&=2\hat{p}_1(\hat{\rho})
\int_{\sdmo}\frac1N\sum\limits_{k = 0}^{N -1}
\one_{C(\tilde\rho \bsx_c + \sqrt{1-\tilde\rho^2}\bsz^{*},\hat{\rho})}(\bsx_k)\rd \sigma_{d-1}(\bsz^{*}) \\
& = 2\hat p_1(\hat\rho)\e( p(\bsy,\hat\rho))
  \end{align*}
for $\bsy$ under Model~\ref{mod:two}.
\end{proof}

\par\noindent{\bf Proof of Theorem~\ref{thm:invariancemodeltwosecondmoment}}
\begin{proof}
The proof follows from using Lemmas \ref{lem:vhepslhs} to \ref{lem:sum_int_limit} and Theorem \ref{thm:location-weighted-stolarsky}.
\end{proof}

\end{document}